\newcommand{\R}{\mathbb{R}}
\newcommand{\eqdef}{\stackrel{\mathrm{def}}{=}}
\newcommand{\ddt}{\dfrac{d}{dt}}
\newcommand{\const}{\mathrm{const}}
\DeclareMathOperator{\Int}{\mathrm{int}}
\DeclareMathOperator{\dom}{\mathrm{dom}}
\DeclareMathOperator{\epi}{\mathrm{epi}}
\newtheorem{lemma}{Lemma}
\newtheorem{thm}{Theorem}
\newtheorem{corollary}{Corollary}
\newtheorem{remark}{Remark}
\newtheorem{defn}{Definition}
\newtheorem{prop}{Proposition}
\newtheorem{hyp}{Hypothesis}
\begin{document}

\title{THE ANALYTICAL SOLUTION TO NEWTON's AERODYNAMIC PROBLEM IN THE CLASS OF BODIES WITH VERTICAL PLANE OF SYMMETRY AND DEVELOPABLE SIDE BOUNDARY}

\author{L.~V.~Lokutsievskiy and M.~I.~Zelikin}
\date{}

\maketitle

\begin{abstract}
	The method of Hessian measures is used to find the differential equation that defines the optimal shape of nonrotationally symmetric bodies with minimal resistance moving in a rare medium. The synthesis of optimal solutions is described. A theorem on the optimality of the obtained solutions is proved.
\end{abstract}

\section{Introduction}

The goal in the present paper is to obtain an exact analytic expression for the shape of bodies exhibiting minimal resistance while moving in rarefied air surroundings. For axially symmetric convex bodies, this problem was proposed and solved by Sir Isaak Newton~\cite{Newton}.

Let the shape of the body be given by a convex function $z=u(x_1,x_2)$. Then the resistance is calculated by the formula
\[
\mathcal{J}(u) = \int_{\Omega} \frac{1}{1+u_{x_1}^2 + u_{x_2}^2}\,dx_1\wedge dx_2\to\min_u,
\]
\noindent where $\Omega$ is the support of the function $u$. The optimal shape is searched among convex bodies, since the convexity condition guarantees that the collision of each particle with a body in a rare medium is unique (the case of multiple collisions was considered in~\cite{Plakhov}). So the solution is sought in the class of convex functions with given support $\Omega$ having height $M>0$ (i.e., $-M\le u|_{\Omega}\le 0$).

At the very end of the 20th century, the problem was considered for bodies that are not surfaces of revolution~\cite{ButazzoKawohl1993}. It was shown~\cite{BrockFeroneKawohl1996} that the removal of the hypothesis of axial symmetry allows reducing resistance: nonaxially symmetric bodies with less resistance than symmetric ones of the same length and cross-section were found.

The exact form of the best shape of bodies with minimal resistance is still unknown. It was considered as a challenge for experts in optimal control theory. In this paper, we try to deal with this challenge.

The main difficulty in solving this optimization problem is the following. It is known~\cite{BrockFeroneKawohl1996,LachandPeletier2001} that if $u$ is an optimal solution that is  $C^2$-smooth in a subregion $\omega\Subset\Omega$, then $\det u''\equiv0$ on $\omega$.  So for a local $C^2$-variation $v$ with support in $\omega$, the function $u+\lambda v$ will be nonconvex for all small $\lambda$. Hence the optimal solution $u$ does not need to satisfy the Euler--Lagrange equation for the functional $\mathcal{J}$. Indeed, the condition $\det u''=0$ means that the surface $z=u(x_1,x_2)$ on each of its smooth parts must be developable. 

Thus, we have no tools to describe the optimal solutions of the problem. At the same time, it is known~\cite{ButazzoKawohl1993}, that optimal solutions exist. The necessary condition that $\det u''=0$ in any region of smoothness of~$u$ means that the surface $z=u(x_1,x_2)$ must be developable for $(x_1,x_2)$ in this region.

In this paper, we replace the hypothesis of axial symmetry by the less restrictive hypotheses of (i) mirror symmetry wrt a vertical plane and (ii) developable structure of the side boundary. Let us remark that all existing aircraft and ships, to say nothing of living creatures, have such symmetry. We propose the method of Hessian measures that allows us to describe explicitly the shape of the body in this case.

It appears that these shapes are defined as solutions of a special differential equation contiguous to equations of Painlev\'e type. The optimality of the obtained solutions is proved.

The paper has the following structure.
\begin{itemize}
	\item In Sec.~2, we give the statement of the problem in terms of convex analysis. In Sec.~3, we describe and develop the method of Hessian measures.
	\item In Sec.~4, using this method, we reduce the problem to an optimal control problem with one-dimensional control on the half-line. We shall call it the key problem, since it appears that this problem plays a key role in finding the optimal shape of the body. The specificity of the key problem is the presence of a singular point at the right end. This singularity has the structure of a movable pole with ramifications of second order (in terms of complex analysis).
	\item In Sec.~5, we prove the $C^1$-smoothness of solutions to the key problem.
	\item In Sec.~6, the equation for singular extremals is deduced from Pontryagin's Maximum Principle in the key problem. This equation has a movable critical singularity (in terms of complex differential equations) at the right end, which is contiguous to equations of Painlev\'e type.	
	\item In Sec.~7, the structure of solutions to this equation is investigated in detail.
	\item In Sec.~8, the field of extremals in the key problem is built.
	\item In Sec.~9, the local optimality of extremals of the field is proved.
	\item In the final Sec.~10, the obtained solutions of Newton's aerodynamic problem are described and a hypothesis is proposed. 
\end{itemize}

\section{Statement of the problem in terms of convex analysis}

Let us give an exact mathematical statement of Newton's aerodynamic problem in terms of convex analysis. Let $\R^n$ be Euclidean space, $\Omega\subset\R^n$ a convex compact set with nonempty interior, $M\ge 0$, and $\delta_\Omega(x)$ the indicator function of the set $\Omega$, i.e., $\delta_\Omega(x)=0$ for $x\in\Omega$ and $\delta_\Omega(x)=\infty$ for $x\not\in\Omega$. Let us denote by $C_M$ the set of all convex closed functions $u:\R^n\to \R$, such that $\delta_\Omega-M\le u\le \delta_\Omega$. In other words, $\dom u=\Omega$ and, for $x\in\Omega$, the condition $-M\le u(x)\le 0$ is valid. We must minimize on~$C_M$ the following functional
\begin{equation}
\label{problem:start}
\mathcal{J}(u) = \int_{\Omega}\frac{1}{1+|u'(x)|^2}\,dx \to \min,\quad u\in C_M.
\end{equation}
\noindent (for a convex function $u$, the derivative $u'$ exists almost everywhere).

The functional $\mathcal{J}$ defines the resistance of a convex $(n+1)$-dimensional solid body of the form $\epi u$ (or $\epi u\cap\{z\le 0\}$) in a constant vertical rarefied flow of particles (moving upwards). The first interesting case is the three-dimensional body corresponding to $n=2$.

This work is a continuation of the paper~\cite{LZ2018} in which we used a new approach to investigate Newton's aerodynamic problem. Our approach was based on the Hessian measures (see~\cite{ColesantiFirst}) and on the transition to the conjugate problem. In this work, we obtain explicit solutions of Newton's aerodynamic problem by using this machinery.

\section{Hessian measures}

The main idea of our approach is to make the change of variable $x\mapsto p(x)$ in the integral \eqref{problem:start}, where the change $x\mapsto p(x)$ is obtained from the Legendre--Young transformation $u^*(p)=\sup_{x}\big(\langle p,x\rangle - u(x)\big)$ of a convex function $u$. Generally speaking, the classical Legendre transformation defines the mapping $x\mapsto p$ only if $u\in C^1$. In the general case $u\not\in C^1$, the mapping $x\mapsto p$ is multivalued and the direct change is impossible. Nevertheless, due to works of Colesanti and Hug \cite{ColesantiFirst,ColesantiHug}, it is possible to make such a change in integral \eqref{problem:start}, since the Lebesgue measure $L^n$ on $\R^n$ becomes the Hessian measure $F_0$ on $\R^{n*}$ defined by the conjugate function $u^*$. 

Let us give a short clarification about the Hessian measures. In~\cite{ColesantiFirst}, it was proved that a convex function having an effective domain with nonempty interior (in our case $u^*$) in $n$-dimensional Euclidean space defines on it the following system of Borel measures $F_j$, $j=0,\ldots,n$. Let $\eta\subset \R^{n*}$ be a Borel set. For any $\varepsilon>0$, we define\footnote{Since all is defined on Euclidean space, the dot product gives the canonical isomorphism $\R^n\simeq\R^{n*}$, and we can assume that $\partial u^*(p)\subset \R^{n*}$.}
\[
\eta^\varepsilon = \bigcup_{p\in\eta} \big(p+\varepsilon\partial u^*(p)\big).
\]
\noindent Then the volume $\eta^\varepsilon$ is a polynomial in $\varepsilon$, i.e., the following analog of Steiner's formula (see~\cite{Schneider}) is valid:
\begin{equation}
\label{eq:schteiner_hessian_mesuare}
L^n(\eta^\varepsilon) = \sum_{j=0}^n \binom{n}{j} F_{n-j}(\eta|u^*) \varepsilon^{j},
\end{equation}
\noindent where $L^n$ is Lebesgue measure.

\begin{defn}
	The measures $F_j(\,\cdot\,|u^*)$ on $\R^{n*}$ defined by a convex function $u^*$ are called \textit{Hessian measures} of the function $u^*$.
\end{defn}

The general construction of Hessian measures is given in \cite{ColesantiHug}. We only note that if $u^*\in C^2$ on a domain $U$, then, for any Borel subset 
$\eta \subset U$, the following formula is fulfilled:
\[
\binom{n}{j} F_{j}(\eta|u^*) = \int_\eta S_{n-j}(p) dp,
\]
\noindent where $S_j(p)$ denotes the elementary symmetric polynomial of degree $j$,
\[
S_j(p) = \sum_{1\le k_1 < \ldots<k_j\le n} \lambda_{k_1}(p)\ldots\lambda_{k_j}(p),
\]
\noindent in the eigenvalues $\lambda_1(p),\ldots,\lambda_n(p)$ of the Hessian form $(u^*)''(p)$.

It is easy to see that $F_n\equiv L^n$. Moreover, it was proved in \cite{ColesantiFirst} that
\[
F_0(\eta|u^*) = L^n\Big( \bigcup_{p\in\eta}\partial u^*(p) \Big) = 
L^n\Big\{ x: \partial u(x)\cap\eta\ne\emptyset \Big\}
\]
\noindent (more general relations between Hessian measures of a function and its conjugate were given in~\cite[Theorem 5.8]{ColesantiHug}).

In~\cite{LZ2018}, the following result on the Legendre--Young--Fenchel transformation in problem~\eqref{problem:start} was obtained.
\begin{thm}[\cite{LZ2018}, Theorem 1]
	\label{thm:conjugate_problem}
	The transformation $u\mapsto u^*$ bijectively maps the class $C_M$ onto the class $C_M^*$ of convex functions $u^*$ on $\R^{n*}$ so that $s_\Omega\le u^*\le s_\Omega+M$ (where $s_\Omega$ denotes the support function of the set $\Omega$). Meanwhile,
	\[
	\mathcal{J}(u)=\mathcal{J}^*(u^*)\eqdef \int_{\R^{n*}} \frac{1}{1+|p|^2} F_0(dp|u^*).
	\]
\end{thm}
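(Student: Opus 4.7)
The statement has two distinct parts: the bijectivity of $(\,\cdot\,)^*$ between $C_M$ and $C_M^*$, and the equality of the resistance functionals. My plan is to treat them separately. The bijectivity is a routine exercise in convex conjugate calculus, while the functional identity is an application of the change-of-variables interpretation of the Hessian measure $F_0$ developed in Section~3.

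\emph{Bijectivity.} Since the conjugation $(\,\cdot\,)^*$ is an involution on the class of proper closed convex functions (Fenchel--Moreau) and is order-reversing, it suffices to check that the two-sided constraint transforms into the correct two-sided constraint. I would invoke the elementary identities $\delta_\Omega^* = s_\Omega$ (by definition of the support function) and $(f+c)^* = f^* - c$ for $c \in \R$. Applying $(\,\cdot\,)^*$ to $\delta_\Omega - M \le u \le \delta_\Omega$ then produces $s_\Omega \le u^* \le s_\Omega + M$, and the same computation run backwards (using $u^{**} = u$) shows that every $u^* \in C_M^*$ comes from a unique $u \in C_M$ via $u = u^{**}$. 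Properness of $u$ (which is needed to apply Fenchel--Moreau) is automatic from $\Omega$ having nonempty interior and $u$ being real-valued on $\Omega$.

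\emph{Functional identity.} The decisive ingredient is the identity quoted at the end of Section~3,
\[
F_0(\eta|u^*) = L^n\bigl\{x \in \R^n : \partial u(x) \cap \eta \ne \emptyset\bigr\},
\]
which identifies $F_0(\,\cdot\,|u^*)$ as the pushforward of $L^n|_\Omega$ under the subgradient map $x \mapsto \partial u(x)$. Since $u$ is convex, it is differentiable $L^n$-almost everywhere on $\Omega$, so $\partial u(x) = \{u'(x)\}$ outside an $L^n$-null set, and hence $F_0(\,\cdot\,|u^*)$ is the ordinary pushforward of $L^n|_\Omega$ under the Borel-measurable map $u' : \Omega \to \R^{n*}$. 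The abstract change-of-variables formula for pushforward measures then gives, for any nonnegative Borel function $g$,
\[
\int_{\R^{n*}} g(p)\, F_0(dp|u^*) \;=\; \int_\Omega g(u'(x))\, dx,
\]
and specializing to $g(p) = 1/(1+|p|^2)$ yields $\mathcal{J}^*(u^*) = \mathcal{J}(u)$ directly.

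The main technical load is concentrated in the pushforward interpretation of $F_0$, which itself rests on the Colesanti--Hug construction cited in the excerpt; once that identification is accepted, everything else is formal. The only care point worth explicitly verifying is that the null set on which $u$ fails to be differentiable may be ignored both in $\mathcal{J}(u)$ (by definition of the Lebesgue integral) and in the pushforward (since it has zero measure), so no delicate analysis of the multivalued region of $\partial u$ is required. Finally, finiteness of the integrals is ensured by $0 \le 1/(1+|p|^2) \le 1$ and by $F_0(\R^{n*}|u^*) \le L^n(\Omega)$, which is itself a consequence of the pushforward identity.
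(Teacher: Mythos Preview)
Your proof is correct, and it is exactly the natural argument that the surrounding Section~3 of the paper sets up. Note, however, that the paper does not actually prove this theorem: it is imported verbatim from~\cite{LZ2018}, so there is no ``paper's own proof'' to compare against here. That said, the ingredients the authors assemble in Section~3 (the Colesanti identity $F_0(\eta|u^*)=L^n\{x:\partial u(x)\cap\eta\ne\emptyset\}$, the remark that the Legendre transform is the right change of variables) make it clear that your route is the intended one, and presumably the one in~\cite{LZ2018}.

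Two small points worth tightening. First, in the bijectivity half you should say one word about why every $w\in C_M^*$ is automatically closed: since $s_\Omega$ is finite and continuous (support function of a compact set), any convex $w$ trapped between $s_\Omega$ and $s_\Omega+M$ is finite on all of $\R^{n*}$, hence continuous, hence closed --- so Fenchel--Moreau applies in both directions. Second, in the functional identity, the set $\{x:\partial u(x)\cap\eta\ne\emptyset\}$ may differ from $\{x\in\Omega:u'(x)\in\eta\}$ on $\partial\Omega$ as well as on the non-differentiability set; both are $L^n$-null (the former because a convex body's boundary has zero Lebesgue measure), so your pushforward identification goes through, but it is worth saying so explicitly.
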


Thus, in the conjugate space $\R^{n*}$, Newton's aerodynamic problem is formulated as follows:
\[
\mathcal{J}^*(u^*)\to\min\qquad u^*\in C_M^*.
\]

\noindent This problem will be called conjugate to problem \eqref{problem:start}. 

The following theorem was obtained in~\cite{LZ2018}. This theorem is very useful in proving the existence of solutions in both the class $C_M$ and its subclasses:

\begin{thm}[\cite{LZ2018}, Theorem 2]
	\label{thm:limit}
	Suppose that a sequence $u_k\in C_M$ converges pointwise in the interior $\Int\Omega$ to a function $u\in C_M$. Also let $f:\R^{n*}\to\R$ be a bounded and continuous function. Then
	\[
	\lim_{k\to\infty}\int_{\Omega}f(u_k'(x))\,dx =
	\int_{\Omega}f(u'(x))\,dx.
	\]
	
	\noindent and all the integrals are well defined and finite.
\end{thm}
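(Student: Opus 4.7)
The plan is to reduce the statement to the dominated convergence theorem, using the classical fact that pointwise convergence of convex functions upgrades to convergence of gradients almost everywhere.

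First I would record two elementary consequences of convexity. (a) All $u_k$ lie in the uniform band $-M\le u_k\le 0$ on $\Omega$, and pointwise convergence of convex functions on an open convex set implies locally uniform convergence on that set; so $u_k\to u$ locally uniformly on $\Int\Omega$. (b) Convex functions with a uniform pointwise bound are uniformly Lipschitz on every compact subset of the interior of their common effective domain, so for any compact $K\Subset \Int\Omega$ the sequence $\{u_k'(x)\}_{k}$ stays in a bounded subset of $\R^{n*}$ for every $x\in K$ at which the derivative exists.

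Next, I would invoke the standard stability result for subdifferentials of convex functions (Rockafellar, \emph{Convex Analysis}, Thm.~25.7): if $u_k\to u$ locally uniformly with all $u_k,u$ convex and $u$ is differentiable at $x_0\in\Int\Omega$, then any sequence $p_k\in\partial u_k(x_0)$ converges to $u'(x_0)$. Since $u\in C_M$ is convex, it is differentiable at almost every point of $\Int\Omega$; since $\partial\Omega$ has Lebesgue measure zero, this yields
\[
u_k'(x)\to u'(x)\quad\text{for almost every }x\in\Omega.
\]

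Finally, I would apply the dominated convergence theorem. The integrands are bounded by $\sup_{p\in\R^{n*}}|f(p)|<\infty$ uniformly in $k$, so the continuity of $f$ together with the pointwise convergence of $u_k'$ almost everywhere on $\Omega$ gives
\[
\int_\Omega f(u_k'(x))\,dx\ \longrightarrow\ \int_\Omega f(u'(x))\,dx.
\]
The integrals are well defined because $u_k'$ and $u'$ are Borel-measurable and exist almost everywhere on $\Omega$, and finite because $\Omega$ is compact and $f$ is bounded. The only nontrivial ingredient is the almost-everywhere convergence $u_k'\to u'$; that is the main obstacle and is exactly what the convex-analysis stability result cited above supplies.
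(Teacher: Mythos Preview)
Your argument is correct. The reduction to dominated convergence via almost-everywhere convergence of gradients is exactly the right approach, and your invocation of Rockafellar's Theorem~25.7 (together with the countable union of null sets where some $u_k$ or $u$ fails to be differentiable) cleanly handles the only nontrivial step.

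Note, however, that the present paper does not actually prove this theorem: it is quoted from the authors' earlier paper~\cite{LZ2018} and used here only as a tool (for instance, to deduce existence of minimizers). So there is no ``paper's own proof'' to compare against in this text. Your self-contained argument is a perfectly standard and complete proof of the statement.
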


For example, if one takes a minimizing sequence in the class $C_M$ and chooses a pointwise converging subsequence from it (using a standard procedure), then Theorem~\ref{thm:limit} gives am immediate proof of the existence of an optimal solution in $C_M$. The existence theorem was proved for the first time in \cite{Marcellini1990,Buttazzo1995} (by using another method).

\section{The Maxwell stratum}

This paper is devoted to the construction of an explicit solution to Newton's aerodynamic problem in the classical three-dimensional case ($n=2$ and $\Omega=\{x_1^2+x_2^2\le 1\}\subset\R^2$). In~\cite{LachandPolygon}, the authors considered a natural subclass $D_M$ of convex bodies in $C_M$ that are a convex hull on the base $\Omega$ and a convex set $\omega_0$ of the plane $\{u=-M\}$. The side surface of the body lying in 
$D_M$ is a smooth developable surface. The body is defined by the set $\omega_0$. The authors showed that the set $\omega_0$ must be a regular polygon (or a segment) with center at the origin, the length of sides and the number of vertices of the polygon being defined by the height of the body $M$.

It is known that any optimal solution in the subclass $D_M$ can be nonoptimal in~$C_M$  (see~\cite{Buttazzo2009}). Numerical experiments show (see~\cite{LachardNumeric,Wachsmuth2014}) that optimal solutions in the class $C_M$ lead to a set $\omega_0$ being a regular polygon (or a segment) with center at the origin. But the side surface is nonsmooth and contains corners along some flat convex curves passing from the vertices of the polygon $\omega_0$ to the border of the base $\Omega$. For instance, for large heights $M$, the set $\omega_0$ is a segment, and the optimal solution seems to contain a corners along a convex curve in the vertical plane of symmetry $\{x_2=0\}$. In this paper, we consider a class of bodies which contains no additional corners. Hence, the side boundary must be a smooth developable surface. So we consider the following class:

\begin{defn}
	We say that a convex function $u\in C_M$ belongs to class $E_M$, if $u=\mathrm{conv}(\delta_\Omega, u_0)$, where $\epi u_0$ is the intersection of $\epi u$ and the vertical plane $\{x_2=0\}$, i.e.\ $u_0(x_1,0)=u(x_1,0)$ and $u_0(x_1,x_2)=\infty$ for $x_2\ne 0$..
\end{defn}

In this paper, we investigate in detail the class $E_M$ and the shapes of the optimal convex bodies in it. We shall obtain explicit formulas for the curve $u_0$ in the vertical plane $\{x_2=0\}$, and a family of solutions (depending on the height $M$) will be constructed for large enough $M$. It will be proved that each solution of the family provides a local minimum to the functional.

Usually in the calculus of variations, the term ``Maxwell stratum'' means the locus of points of intersection of different extremals with the same value of a functional. The extremals lose their optimality after the intersection with a Maxwell stratum. It is easy to find the Maxwell stratum if there is a symmetry. Then the Maxwell stratum appears naturally when an extremal intersects its own image. We consider height in Newton's aerodynamic problem as an analog of a functional in the calculus of variations and the generating lines of developable surfaces as extremals. Thus, if a convex body is symmetric with respect to a vertical plane and is smooth everywhere (except for points in the plane), then the generating lines of two symmetrical developable surfaces intersect at points in this plane. Having in mind the above analogy, we shall use the term ``Maxwell stratum'' for the intersection of the boundary of a symmetric convex body with its symmetry plane.

\medskip

Thus, let the domain of a convex function $u:\R^2\to\R$ be the unit circle $\Omega=\dom u=\{x_1^2+x_2^2\le 1\}$, and let
\[
u = \mathrm{conv}(\delta_\Omega,u_0) = \delta_\Omega \wedge u_0,
\]

\noindent where $\wedge$ is a short notation for the convex hull. The function $u_0(x_1,x_2)$ is equal to $\infty$ outside the segment $I=[(-1,0);(1,0)]$, while, on the segment, it is bounded by the numbers $-M$ and $0$, that is,
\[
\delta_I-M\le u_0\le \delta_I.
\]

Let us denote $v(p_1)=u_0^*(p_1,p_2)$ (the function $u_0^*(p_1,p_2)$ does not depend on  $p_2$). Consequently,

\[
|p_1|\le v\le |p_1|+M\quad\mbox{and}\quad u^*=\delta_\Omega^*\vee v=\max\{\sqrt{p_1^2+p_2^2},v(p_1)\}.
\]
\begin{thm}
	\label{thm:J_of_v}
	Let $\Omega=\dom u=\{x_1^2+x_2^2\le 1\}$, and let the shape of the convex body be $u=\delta_\Omega\wedge v^*$, where the convex function\footnote{Here and in what follows, the variable $p\in\R$ is a scalar.} $v(p):\R\to\R$ fulfills the conditions\footnote{Note that $\min \hat u=-M$ for an optimal solution~$\hat u$; hence $\hat v(0)=\hat u^*(0,0)=M>0$.} $|p|\le v(p)\le |p|+M$ and $v(0)>0$. Then the functional~\eqref{problem:start} has the form
	\[
	\mathcal{J}(u) = \int_{p_0^-}^{p_0^+}
	\Big[
	\frac{2\sqrt{v^2-p^2}(v')^2}{(1+v^2)^2} -
	\frac{pv'-v}{v(1+v^2)\sqrt{v^2-p^2}}
	\Big]dp,
	\]
	\noindent where $p_0^-=\inf\{p:v(p)>-p\}$ and $p_0^+=\sup\{p:v(p)>p\}$.
\end{thm}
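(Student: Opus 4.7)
The plan is to apply Theorem~\ref{thm:conjugate_problem} to pass to the conjugate form $\mathcal{J}(u)=\int(1+|p|^2)^{-1}F_0(dp|u^*)$ with $u^*=\max\{\sqrt{p_1^2+p_2^2},\,v(p_1)\}$, and then to compute $F_0$ explicitly. This $u^*$ is the pointwise maximum of two convex functions whose Hessians are everywhere of rank at most $1$: on the region $A=\{v(p_1)>\sqrt{p_1^2+p_2^2}\}$ we have $u^*=v(p_1)$, a function of $p_1$ alone, while on $B=\{v(p_1)<\sqrt{p_1^2+p_2^2}\}$ we have $u^*=|p|$, positively $1$-homogeneous. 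Hence the Hessian measure $F_0$ vanishes on $\Int A\cup \Int B$ and concentrates on the crease $C=\{v(p_1)=|p|\}$, which decomposes into the two arcs $p_2=\pm\sqrt{v^2-p_1^2}$, $p_1\in[p_0^-,p_0^+]$. The assumption $v(0)>0$, together with $v\ge|p|$, forces $v>0$ on all of $[p_0^-,p_0^+]$ and places the origin in $\Int A$, so there is no extra point mass at $p=0$.

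On the upper arc of $C$, the subdifferential $\partial u^*$ at $(p,\sqrt{v^2-p^2})$ is the segment joining the gradient limit from $A$, namely $A(p)=(v'(p),0)$, to the gradient limit from $B$, namely $B(p)=(p/v,\sqrt{v^2-p^2}/v)$; these segments are precisely the projections onto $\Omega$ of the rulings of the smooth side surface of the body. Parameterising the upper ruled region by $x(p,t)=A(p)+t(B(p)-A(p))$, $(p,t)\in[p_0^-,p_0^+]\times[0,1]$ — injectively, since convexity of $v$ prevents the rulings from crossing — I compute the Jacobian $J=\det\partial(x_1,x_2)/\partial(p,t)$. The algebra is driven by the identity
\[
(v-pv')(v^2-p^2)-pv'(v^2+p^2)+p^2v(1+(v')^2)=v(v-pv')^2,
\]
and produces
\[
J(p,t)=(1-t)\,\frac{v''\sqrt{v^2-p^2}}{v}+t\,\frac{(v-pv')^2}{v^2\sqrt{v^2-p^2}},
\]
which is non-negative by the convexity of $v$.

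Since $|p|^2=v^2$ on $C$ and the two arcs contribute equally by the symmetry $p_2\mapsto-p_2$, Theorem~\ref{thm:conjugate_problem} yields
\[
\mathcal{J}(u)=\int_{p_0^-}^{p_0^+}\frac{1}{1+v^2}\left[\frac{v''\sqrt{v^2-p^2}}{v}+\frac{(v-pv')^2}{v^2\sqrt{v^2-p^2}}\right]dp.
\]
The stated formula is then obtained by integrating the $v''$ term by parts against $F(p)=\sqrt{v^2-p^2}/(v(1+v^2))$: the boundary terms at $p_0^\pm$ vanish because $\sqrt{v^2-p^2}=0$ there, and a careful collection of the remaining pieces shows that all contributions of the form $(v')^2/\sqrt{v^2-p^2}$ and $p^2(v')^2/(v^2\sqrt{v^2-p^2})$ cancel exactly, leaving
\[
\int_{p_0^-}^{p_0^+}\left[\frac{2\sqrt{v^2-p^2}(v')^2}{(1+v^2)^2}-\frac{pv'-v}{v(1+v^2)\sqrt{v^2-p^2}}\right]dp.
\]
The hard parts will be the Jacobian computation and the post–integration–by–parts bookkeeping; the boundary behaviour at $p_0^\pm$ is controlled because the tangency $v'(p_0^\pm)=\pm1$ — forced by convexity together with $v\ge|p|$ — makes $pv'-v$ vanish at the right rate. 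The non-smooth case is handled by approximating $v$ by smooth convex functions and passing to the limit via Theorem~\ref{thm:limit}.
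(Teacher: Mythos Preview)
Your approach is correct and takes a genuinely different route from the paper. The paper computes the Hessian measure $F_0$ via its Steiner-type definition~\eqref{eq:schteiner_hessian_mesuare}: it expands the curve $\gamma=\{p_1^2+p_2^2=v^2(p_1)\}$ by $\varepsilon\partial u^*$, bounds the resulting set by four Lipschitz arcs, computes its area by Green's formula, and reads off the $\varepsilon^2$-coefficient (this is Lemma~\ref{lm:hess_mes}). You instead invoke the identity $F_0(\eta|u^*)=L^2\big(\bigcup_{p\in\eta}\partial u^*(p)\big)$ directly, parametrise that union as the ruled strip $x(p,t)=(1-t)(v',0)+t(p/v,\sqrt{v^2-p^2}/v)$, and compute its area by a Jacobian. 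Your computation of $J(p,t)$ is correct, and the resulting intermediate form $\int(1+v^2)^{-1}[v''\sqrt{v^2-p^2}/v+(v-pv')^2/(v^2\sqrt{v^2-p^2})]\,dp$ is equivalent to the paper's after one integration by parts. The trade-off is this: the paper's Green-formula argument works verbatim for merely Lipschitz $v$ (no $v''$ ever appears), whereas your Jacobian involves $v''$ and forces you to smooth and pass to the limit --- which is fine, but you must then check that the \emph{right-hand side} also converges under the approximation, not only $\mathcal{J}(u)$ via Theorem~\ref{thm:limit}. Two small clean-ups: the boundary term $Fv'$ vanishes at finite $p_0^\pm$ simply because $\sqrt{v^2-p^2}=0$ there (your remark about $v'(p_0^\pm)=\pm1$ is neither needed nor always true), and you should note separately that if $p_0^\pm=\pm\infty$ the term still vanishes because $|v'|\le1$, $\sqrt{v^2-p^2}/v\le1$, and $1/(1+v^2)\to0$.
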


To prove the theorem, we pass to the conjugate problem by using Theorem~\ref{thm:conjugate_problem}. Hence the main part of the proof is the calculation of the measure $F_0(dp_1,dp_2`|u^*)$. In the paper~\cite{LZ2018}, the measure was calculated under the additional assumption that the function $v$ is smooth. Below we shall prove that the optimal curve $v$ must be $C^1$-smooth everywhere except~0 (see Theorem~\ref{thm:convex_is_C_one} below). But this will need an explicit form of the functional $\mathcal{J}$ for nonsmooth functions $v$. Because of this, the assumption on the smoothness of $v$ in these circumstances is excessively restrictive and should be removed. Thus,

\begin{lemma}
	\label{lm:hess_mes}
	If $v(0)>0$, then the measure $F_0(dp_1,dp_2|u^*)$ is concentrated on the locally Lipschitzian curve\footnote{The curve $\gamma$ has two connected components for each maximal interval $v(p_1)>|p_1|$.} $\gamma = \{ p_1^2+p_2^2=v^2(p_1), p_2\ne 0\}$, and is given there by the formula
	\[
	F_0(dp_1,dp_2|u^*)|_\gamma=\frac12\Big[\frac{p_1v'-v}{p_2v}dp_1 - d\Big(\frac{p_2v'}{v}\Big)\Big].
	\]
\end{lemma}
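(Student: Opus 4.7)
My plan is to compute $F_0$ directly from the definition $F_0(\eta|u^*) = L^2\bigl(\bigcup_{p\in\eta}\partial u^*(p)\bigr)$ recalled in Section~3. First I would localize the support of $F_0$ on $\gamma$: on the open set $U = \{p:|p|<v(p_1)\}$ one has $u^* = v(p_1)$, independent of $p_2$, so the Hessian has a zero eigenvalue and $F_0|_U = 0$; on the complement of $\overline{U}$, $u^* = \sqrt{p_1^2+p_2^2}$ is one-homogeneous with a zero radial eigenvalue, so $F_0$ vanishes there away from the origin; the hypothesis $v(0)>0$ places $0$ strictly inside $U$, so no atom sits at the origin. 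Hence $F_0$ is concentrated on $\gamma$, which is locally Lipschitz because $p_2\ne 0$ there and $\gamma$ is locally the graph $p_2 = \pm\sqrt{v^2-p_1^2}$.

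For the computation of $F_0|_\gamma$, the max-rule yields, at $(p_1,p_2)\in\gamma$ with $p_2 \ne 0$,
\[
\partial u^*(p_1,p_2) = \mathrm{conv}\bigl([v'_-(p_1),v'_+(p_1)]\times\{0\}\ \cup\ \{(p_1,p_2)/v\}\bigr),
\]
a segment where $v$ is $C^1$ at $p_1$, a nondegenerate triangle at a kink of $v$. On a smooth arc of the upper half of $\gamma$ (where $p_2 = \sqrt{v^2-p_1^2}>0$) I would parametrize the union of these segments as
\[
\Phi(p_1,t) = t\,(v'(p_1),0) + (1-t)\,(p_1,p_2)/v,\qquad t\in[0,1],
\]
and compute the Jacobian. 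Using the shorthand $A \eqdef v - p_1 v'$ and the identity $p_2' = (vv' - p_1)/p_2$, a direct calculation gives
\[
\det D\Phi = -\frac{t\,v''p_2}{v} - \frac{(1-t)\,A^2}{v^2p_2},
\]
which is of constant sign. Essential injectivity of $\Phi$ follows from strict monotonicity of $\partial u^*$ away from kinks, so integrating $|\det D\Phi|$ over $t\in[0,1]$ gives the density of $F_0|_\gamma$ along the arc. A short manipulation of $d(p_2 v'/v)/dp_1 = p_2 v''/v - p_1 v' A/(v^2 p_2)$ then matches this density with the claimed 1-form, provided $\gamma$ is oriented as the positively oriented (counterclockwise) boundary of $U$; the lower half of $\gamma$ is symmetric. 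At an isolated kink of $v$ the triangular subdifferential contributes Lebesgue area $\tfrac12(v'_+ - v'_-)\,|p_2|/v$, which is exactly the atomic part of $-\tfrac12 d(p_2 v'/v)$ interpreted as a Lebesgue--Stieltjes measure (legitimate because $v'$, as the derivative of a convex function, is of bounded variation).

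The main obstacle is the nonsmooth accounting: verifying that the distributional ``$d$'' in $d(p_2 v'/v)$ recovers precisely the triangular atoms at the kinks of $v$, and that the orientation-dependent signs on the upper and lower halves of $\gamma$ combine so that the total is a nonnegative Borel measure on $\gamma$. A useful fallback, if this direct bookkeeping gets awkward, is to mollify $v$ to smooth convex $v_n$, apply the $C^2$ version of the formula proved in~\cite{LZ2018}, and pass to the limit using the weak convergence of Hessian measures guaranteed by Theorem~\ref{thm:limit}.
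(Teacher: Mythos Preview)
Your localization of the support of $F_0$ is correct and essentially identical to the paper's. The computation on $\gamma$, however, is genuinely different. You use the direct formula $F_0(\eta|u^*)=L^2\bigl(\bigcup_{p\in\eta}\partial u^*(p)\bigr)$, parametrize the swept region by $(p_1,t)\mapsto\Phi(p_1,t)$, and integrate the Jacobian; kinks of $v$ are then handled as separate triangular atoms. The paper instead invokes the Steiner-type expansion~\eqref{eq:schteiner_hessian_mesuare}: for an arc $\nu\subset\gamma$ it computes $L^2(\nu^\varepsilon)$ with $\nu^\varepsilon=\bigcup_{p\in\nu}(p+\varepsilon\,\partial u^*(p))$, bounds this set by four explicit Lipschitz curves, applies Green's formula $\int p_2\,dp_1$ to each, and reads off the coefficient of~$\varepsilon^2$. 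Nonsmoothness of $v$ is absorbed uniformly by parametrizing one of the boundary curves via the arc length of the graph of $v'$, so smooth arcs and kinks are treated at once rather than separately.

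Your Jacobian and atomic computations are right (I checked: $\det D\Phi=-t\,v''p_2/v-(1-t)A^2/(v^2p_2)$, and its $t$-integral matches the density of the claimed $1$-form up to the orientation sign you note). What your route buys is directness on smooth arcs; what it costs is the piecemeal treatment of kinks and the appeal to ``essential injectivity of $\Phi$'', which you only sketch. The paper's Green-formula route sidesteps both issues: disjointness of the sets $p+\varepsilon\,\partial u^*(p)$ is automatic from convexity of $u^*$ (monotonicity of $\mathrm{id}+\varepsilon\,\partial u^*$), and the arc-length trick makes the nonsmooth case no harder than the smooth one. Your mollification fallback would also work but is not needed by either argument.
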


\begin{proof}
	By definition, $u^*=\max\{s_\Omega,v\}$, where $s_\Omega(p_1,p_2)=\sqrt{p_1^2+p_2^2}$ and $v=v(p_1)$. Hence, if $\sqrt{p_1^2+p_2^2}>v(p_1)$ for $(p_1,p_2)\in\R^2$, then $F_0(dp_1,dp_2|u^*)=F_0(dp_1,dp_2|\delta^*_\Omega)$ in a neighbourhood of $(p_1,p_2)$. Similarly, if $\sqrt{p_1^2+p_2^2}<v(p_1)$, then $F_0(dp_1,dp_2|u^*)=F_0(dp_1,dp_2|v)$ in a neighbourhood of $(p_1,p_2)$.
	
	Since $v$ does not depend on $p_2$, $F_0(dp_1,dp_2|v)\equiv0$. Since the Hessian $\det (\delta^*_\Omega)''=0$ outside the origin, the measure $F_0(dp_1,dp_2|\delta^*_\Omega)$ is concentrated at the origin. Taking into account that $v(0)>0=\delta^*_\Omega(0)$, we see that the measure $F_0(dp_1,dp_2|u^*)$ is concentrated on the set of points $(p_1,p_2)$, such that $v(p_1)=\sqrt{p_1^2+p_2^2}$. These points generate the curve $\gamma$ and, perhaps, a set of points where $p_2=0$. We claim that the measure of this set relative to $F_0(dp_1,dp_2|u^*)$ equals zero. Indeed, at points of this set, the derivative $(\delta_\Omega^*)'$ and the sub-differential $\partial v(p_1)$ belong to the line $\{x_2=0\}$. Hence the area of their convex hull equals zero.
	
	Let us note that the curve $\gamma$ has the natural parametrization		
	\[
	p_2 = \pm\sqrt{v^2(p_1)-p_1^2}\ne 0,
	\]
	
	\noindent which is locally Lipschitzian, since the function $v(p_1)$ is convex and so it is locally Lipschitzian.	
	
	Now let us calculate the measure $F_0(dp_1,dp_2|u^*)$ on $\gamma$. For definiteness, we consider the part $\nu$ of the curve $\gamma$ lying in the upper half-plane $p_2>0$ (where $p_1\in[\alpha;\beta]$ for some $\alpha$, $\beta$) that does not contain points $p_2=0$. Let $\varepsilon>0$. Let us use the definition~\eqref{eq:schteiner_hessian_mesuare}. The area of the set
	\[
	\nu^\varepsilon=\{(p_1,p_2)+\varepsilon\partial u^*(p_1,p_2)|(p_1,p_2)\in\nu\}
	\]
	
	\begin{figure}[ht]
		\centering
		\includegraphics[width=0.4\textwidth]{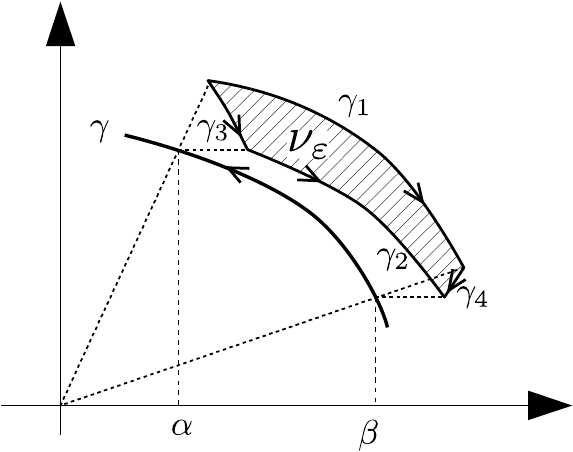}
		\caption{The structure of the set $\nu_\varepsilon$.}
		\label{fig:nu_epsilon}
	\end{figure}

	\noindent is a quadratic polynomial in $\varepsilon$, and, by definition, the coefficient at $\varepsilon^2$ is the value of the measure   $F_0(dp_1,dp_2|u^*)$ on $\nu$. The subdifferential of $u^*$ at points of $\gamma$ is given according to the formula of Dubovitzkij-Milutin as the convex hull of the subdifferentials $\sqrt{p_1^2+p_2^2}$ and $v(p_1)$. Therefore, the set $\nu_\varepsilon$ is bounded by the four Lipschitzian curves
	
	\begin{gather*}
	\gamma_1:\quad \Big(p_1+\varepsilon\frac{p_1}{v(p_1)},\sqrt{v(p_1)^2-p_1^2}+\varepsilon\frac{\sqrt{v(p_1)^2-p_1^2}}{v(p_1)}\,\Big),\\
	\gamma_2:\quad\Big(p_1 + \varepsilon\partial v(p_1),\sqrt{v(p_1)^2-p_1^2}\,\Big),\\
	\gamma_3:\quad\Big(\alpha + \varepsilon\big((1-t)\frac{\alpha}{v(\alpha)} + t v'(\alpha-0)\big),\sqrt{v(\alpha)^2-\alpha^2} + \varepsilon(1-t)\frac{\sqrt{v(\alpha)^2-\alpha^2}}{v(\alpha)}\,\Big),\\
	\gamma_4:\quad\Big(\beta + \varepsilon\big((1-t)\frac{\beta}{v(\beta)} + t v'(\beta+0)\big),\sqrt{v(\beta)^2-\beta^2} + \varepsilon (1-t)\frac{\sqrt{v(\beta)^2-\beta^2}}{v(\beta)}\,\Big).\\
	\end{gather*}
	
	\noindent Here $p_1\in[\alpha;\beta]$ and $t\in[0;1]$. Let us note that the boundary $\nu^\varepsilon$ passing counter-clockwise is $-\gamma_1+\gamma_2+\gamma_3-\gamma_4$ (as shown in Fig.~\ref{fig:nu_epsilon}). Since $v(p_1)>\sqrt{p_1^2+p_2^2}$ in a neighbourhood of the origin, the vector of the difference $\partial v(p_1) - \frac{(p_1,p_2)}{\sqrt{p_1^2+p_2^2}}$ is directed from the point $(p_1,p_2)$ to the origin at any point $(p_1,p_2)$ of the curve $\gamma$. 
	
	Let us show that the curves $\gamma_i$ do not intersect one another (except at the endpoints). The sets $u^*(p_1,p_2)+\varepsilon\partial u^*(p_1,p_2)$ do not intersect for different $(p_1,p_2)$ (as follows from convexity of $u^*$). Hence the curves $\gamma_1$ and $\gamma_2$ can intersect only for equal values of $p_1$. But this is also impossible, since, from the equality of the second coordinates, it follows that $p_2=\sqrt{v^2(p_1)-p_1^2}=0$, but this is prohibited by the choice of $\nu$. Similarly, we obtain this for the remaining pairs $\gamma_i$ and $\gamma_j$.
	
	Note that the curves $\gamma_1$, $\gamma_3$, and $\gamma_4$ are inscribed in the Lipschitzian parametrization. To inscribe in this parametrization $\gamma_2$, we shall act as follows: the function $v'(p_1)$ is monotonically increasing; hence its graph $\Gamma=\{(p_1,q_1):q_1\in\partial v(p_1)\}$ has a finite length, which we choose as a parameter. So the curve $\Gamma$ will be given by two Lipschitzian functions $p_1=P(l)$ and $q_1=Q(l)$. Then the curve $\gamma_2$ takes the form	
	\[
	\gamma_2:\quad p_1=P(l)+\varepsilon Q(l),\ p_2 = \sqrt{v^2(P(l))-P^2(l)}.
	\]
	\noindent This parametrization is Lipschitzian, because the function $v(p_1)$ is convex.
	
	Hence the set $\nu_\varepsilon$ is bounded by a Lipschitzian curve and its area can be calculated by Green's formula. Taking the counter-clockwise direction, we obtain
	\[
	L^2(\nu^\varepsilon) = \int_{\gamma_1} p_2dp_1 - \int_{\gamma_2} p_2dp_1 - \int_{\gamma_3} p_2dp_1 + \int_{\gamma_4} p_2dp_1.
	\]
	\noindent The second summand does not affect the coefficient of $\varepsilon^2$. Hence this coefficient has the form
	\[
	\frac12\frac{d^2}{d\varepsilon^2}L^2(\nu^\varepsilon) =
	\int_\alpha^\beta \frac{\sqrt{v^2(p_1)-p_1^2}}{v(p_1)}d\Big(\frac{p_1}{v(p_1)}\Big) +
	\frac12\Big(\frac{v(p_1)v'(p_1)-p_1}{v^2(p_1)}\sqrt{v^2(p_1)-p_1^2}\Big)
	\Big|_{\alpha-0}^{\beta+0}.
	\]
	The origin lies inside the domain bounded by the curve $\gamma$. Choosing the counter-clockwise direction, we see that the measure $F_0(dp_1,dp_2|u^*)$ on the part $\gamma$ with $p_2>0$ takes the form (with regard to $p_2=\sqrt{v^2(p_1)-p_1^2}$)
	\[
	F_0(dp_1,dp_2|du^*)|_{\gamma\cap\{p_2>0\}} =-\frac{p_2}{v(p_1)}d\Big(\frac{p_1}{v(p_1)}\Big) - \frac12d\Big(\frac{v(p_1)v'(p_1)-p_1}{v^2(p_1)}p_2\Big).
	\]
	\noindent Similar calculations show that, on the lower part of $\gamma$, where $p_2<0$, the measure $F_0(dp|u^*)$ is given by the same formula. Taking into account the fact that $vv'dp_1\stackrel{\gamma}{=}p_1dp_1 + p_2dp_2$, we obtain
	
	\begin{multline*}
	F_0(dp_1,dp_2|du^*)|_\gamma = \frac{-p_2dp_1}{v^2} + \frac{p_1p_2dv}{v^3} - \frac12d\Big(\frac{p_2v'}{v}\Big) + \frac12d\Big(\frac{p_1p_2}{v^2}\Big)=\\
	=\frac12\Big[\frac{p_1dp_2-p_2dp_1}{v^2} - d\Big(\frac{p_2v'}{v}\Big)\Big]=
	\frac12\Big[\frac{p_1v'-v}{p_2v}dp_1 - d\Big(\frac{p_2v'}{v}\Big)\Big].
	\end{multline*}
\end{proof}

\begin{proof}[Proof of Theorem~\ref{thm:J_of_v}.]
	By Theorem~\ref{thm:conjugate_problem} the resistance of the body constructed by the Maxwell stratum $v^*$ is given by the formula
	
	\[
	\mathcal{J}(u) = \int_\gamma \frac{1}{1+p_1^2+p_2^2} F_0(dp_1,dp_2|u^*)|_\gamma
	\]
	
	\noindent or, with regard to $v(p_1)\stackrel{\gamma}{=}\sqrt{p_1^2+p_2^2}$ and Lemma~\ref{lm:hess_mes}, we obtain\footnote{Since $\sqrt{p_1^2+p_2^2}\le v\le \sqrt{p_1^2+p_2^2}+M$, the curve $\gamma$ consists exactly of two symmetric connected components: one for $p_2>0$ and the other for $p_2<0$.}
	\[
	\mathcal{J}(u) = \int_{p_0^-}^{p_0^+}
	\frac{1}{1+v^2}\Big[-\frac{p_1v'-v}{\sqrt{v^2-p_1^2}v}dp_1 + d\Big(\frac{v'\sqrt{v^2-p_1^2}}{v}\Big)\Big].
	\]
	\noindent Let us find~$p_0^-$ and $p_0^+$. If $v(q)=-q$ for some number~$q$, then $v(r)=-r$ for all $r\le q$, since $|p_1|\le v(p_1)\le |p_1|+M$ and $v$ is convex. Hence $p_0^-$ is the greatest solution to the equation $v(p_1)=- p_1$ (if it exists) or $p_0^-=-\infty$ (in the opposite case). Similarly, we find~$p_0^+$. Integrating the last summand in the previous integral by parts, we obtain
	\[
	\mathcal{J}(u) = \int_{p_0^-}^{p_0^+}
	\Big[
	\frac{2\sqrt{v^2-p_1^2}(v')^2}{(1+v^2)^2} - \frac{p_1v'-v}{v(1+v^2)\sqrt{v^2-p_1^2}}
	\Big]dp_1.
	\]
	\noindent The terminal parts vanish, since $v'\sqrt{v^2-p_1^2}/v\to 0$ both as $p_1\to p_0^-+0$ and as $p_1\to p_0^+-0$. Indeed, $v'\in[-1;1]$ and if $p_0^-\ne -\infty$, then $v(p_0^-)=-p_0^-$, and if $p_0^-=-\infty$, then $v\to +\infty$ as $p\to -\infty$ (similarly, for $p_0^+$).
	
\end{proof}

In what follows, we shall drop the subscript for $p_1$ writing for simplicity $p\in\R$, as was done in Theorem~\ref{thm:J_of_v}. Thus, we have the problem
\[
\int_{p_0^-}^{p_0^+}\left[
\frac{2\sqrt{v^2-p^2}(v')^2}{(1+v^2)^2} - \frac{pv'-v}{v(1+v^2)\sqrt{v^2-p^2}}
\right]\,dp\to\min_v.
\]

\noindent The minimum must be found in the class of convex functions $v:\R\to\R$, $v(0)=M>0$, satisfying the inequalities
\begin{equation}
\label{eq:func_v_ineq}
|p|\le v(p) \le |p|+M,
\end{equation}
\noindent and the boundary values $p_0^-$ and $p_0^+$ are the extreme solutions of the equations $v=\pm p$ (respectively), or $p_0^-=-\infty$ or $p_0^+=\infty$, if there are no solutions of the corresponding equations. Since $v(0)=M>0$, we have $p_0^-<0$ and $p_0^+>0$.

\begin{remark}
	The problem is invariant under the substitution of $p$ for $-p$. Therefore, it is natural to consider the problem on the interval $[0;p_0]$. If we find a solution in the problem on the interval $[0;p_0]$, then we must reflect it symmetrically to $[-p_0;0]$. But it is necessary to check that the solution on $[0;p_0]$ has a nonnegative derivative in $0$ (otherwise, the symmetric prolongation gives a nonconvex function). In what follows, we shall verify that the constructed solutions on $[0;p_0]$ indeed have a positive derivative at $0$. Therefore, the solution on $[p_0^-;p_0^+]$ is symmetric. 
\end{remark}

Hence we have the following key problem: To find a convex function $v:\R\to\R$ that minimizes the following functional:

\begin{equation}
\label{eq:maxwell_integral_value}
\boxed{
	\begin{gathered}
	J(v)=\int_0^{p_0} \left[
	\frac{2\sqrt{v^2-p^2}(v')^2}{(1+v^2)^2} - \frac{pv'-v}{v(1+v^2)\sqrt{v^2-p^2}}
	\right]\,dp\to\min_v\\
	v(0)=M;\qquad p_0=\sup\{p:v(p)>p\};\qquad 0\le p\le v(p)\le p+M.
	\end{gathered}
}
\end{equation}

\noindent We shall frequently write the condition $0\le p\le v\le p+M$ in the form $(p,v)\in U$.

Let us formulate in local terms the condition that the convex curve $v(p)$ cannot be improved. Let us replace $v$ on the interval $[\alpha;\beta]\subset[0;p_0]$ by a new curve $\tilde v$. Convexity of of the modified curve is equivalent to the following: (i)~convexity of $\tilde v$ on $[\alpha;\beta]$, (ii) continuity of the junction at the end points $\tilde v(\alpha)=v(\alpha)$ and $\tilde v(\beta)=v(\beta)$, and (iii) conservation of the monotonicity of the derivative at the points $\alpha$ and $\beta$, i.e., $\tilde v'(\alpha+0)\ge v'(\alpha-0)$ and $\tilde v'(\beta-0)\le v'(\beta+0)$. Besides, it is necessary to require that the new curve remain in the domain $U$. So the curve~$v$ cannot be improved on~$[\alpha;\beta]$ if any curve $\tilde v$ satisfying all the above-mentioned conditions gives larger values to the integral~\eqref{eq:maxwell_integral_value} on~$[\alpha;\beta]$.

\section{Smoothness of the optimal solution}

The role of the second derivative of the convex function can be played, generally speaking, by any nonnegative measure. Hence, formally speaking, to remove the restriction $v''\ge 0$ in problem~\eqref{eq:maxwell_integral_value}, we need to use Pontryagin's Maximum Principle for an impulse-type control. For instance, if we put $v'=w$ and $w'=\theta\ge 0$, then the obtained trajectory $(v(p),w(p))$ will be discontinuous at points where the measure~$\theta$ has atoms. In this section, we shall prove that the optimal solution must belong to the class $C^1$, i.e., $v,w\in C$. In this case, it may still appear that the measure~$\theta$ has a purely singular component. But it follows from Pontryagin's Maximum Principle that the adjoint variables are $C^1$-smooth, which essentially simplifies the investigation of the problem.

So let us formulate the problem in general form. Let $U\subset\R^2$ be a convex set, $\Int U \neq \emptyset$. Consider the problem  

\begin{equation}
\label{eq:general_convex_var_problem}
\begin{gathered}
I(x)=\int_{t_0}^{t_1} f(t,x,\dot x)\,dt\to\min,\\
x\mbox{ is a convex function},\quad (t,x(t))\in U\mbox{ for all }t\in[t_0;t_1],\\ 
\quad x(t_0)=x_0,\ x(t_1)=x_1,\ \ \dot x(t_0+0)\ge s_0,\ \dot x(t_1-0)\le s_1,
\end{gathered}
\end{equation}

\noindent where $x_{0,1}$, $t_{0,1}$, and $s_{0,1}$ are given numbers.

It is important to require that the strict inequalities $t_0<t_1$ and $s_0<s_1$ hold. Indeed, if $t_0=t_1$, then the interval of integration reduces to a point, and if $s_0=s_1$, then the only possible admissible curve is a segment of a straight line and only in the case $s_0=s_1=(x_1-x_0)/(t_1-t_0)$.

Let us note that the Euler--Lagrange equation needs not to be fulfilled, because any small variation can break the key convexity condition.

\begin{thm}
	\label{thm:convex_is_C_one}
	Let $f\in C^2(\R^3\to\R)$, and let $\hat x(t)$ be an optimal solution of problem~\eqref{eq:general_convex_var_problem}. Suppose that $(t,x(t))\in\Int U$ for all $t\in(t_0,t_1)$. Suppose that the strong Legendre condition
	\[
	f_{\dot x\dot x}(t,x,\dot x)>0
	\]
	
	\noindent is fulfilled for all points $(t,x,\dot x)$ of the graph $\Gamma=\{(t,x,\dot x):t\in[t_0,t_1], x=\hat x(t), \dot x\in\partial\hat x(t)\}\subset\R^3$. Then $\hat x\in C^1[t_0;t_1]$.
	
\end{thm}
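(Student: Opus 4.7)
The plan is to argue by contradiction: assume the optimal $\hat x$ has a corner at some interior point $\tau\in(t_0,t_1)$, so $a:=\dot{\hat x}(\tau-0)<\dot{\hat x}(\tau+0)=:b$, and construct an admissible local variation that strictly decreases the functional. The natural candidate is to shave off the corner: for small $\varepsilon>0$, let $\tilde x$ coincide with $\hat x$ outside $[\tau-\varepsilon,\tau+\varepsilon]$ and on this interval equal the chord joining $(\tau-\varepsilon,\hat x(\tau-\varepsilon))$ to $(\tau+\varepsilon,\hat x(\tau+\varepsilon))$. The one-sided Taylor expansions $\hat x(\tau+\varepsilon)=\hat x(\tau)+b\varepsilon+o(\varepsilon)$ and $\hat x(\tau-\varepsilon)=\hat x(\tau)-a\varepsilon+o(\varepsilon)$ give the chord slope $s_\varepsilon=\tfrac{a+b}{2}+o(1)$.

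Next I would verify admissibility. The chord is convex and matches $\hat x$ at $\tau\pm\varepsilon$. Since $\dot{\hat x}$ is nondecreasing, one has $\dot{\hat x}(\tau-\varepsilon-0)\leq a$ and $\dot{\hat x}(\tau+\varepsilon+0)\geq b$, hence for small $\varepsilon$ the strict inequalities $\dot{\hat x}(\tau-\varepsilon-0)<s_\varepsilon<\dot{\hat x}(\tau+\varepsilon+0)$ hold and $\tilde x$ glues into a convex function on $[t_0,t_1]$. The state constraint $(t,\tilde x(t))\in U$ is preserved because $(\tau,\hat x(\tau))\in\Int U$ and the chord is $C^0$-close to $\hat x$ on the shrinking interval. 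The endpoint conditions at $t_0,t_1$ are untouched since the variation is supported strictly inside $(t_0,t_1)$.

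Then I would quantify the gain. Because $\dot{\hat x}$ is monotone, the uniform asymptotics $\dot{\hat x}(t)=a+o(1)$ on $(\tau-\varepsilon,\tau)$ and $\dot{\hat x}(t)=b+o(1)$ on $(\tau,\tau+\varepsilon)$ hold as $\varepsilon\to 0^+$. Together with continuity of $\hat x$ and of $f$, this yields
\[
\int_{\tau-\varepsilon}^{\tau+\varepsilon} f(t,\hat x(t),\dot{\hat x}(t))\,dt=\varepsilon\bigl[f(\tau,\hat x(\tau),a)+f(\tau,\hat x(\tau),b)\bigr]+o(\varepsilon),
\]
whereas the chord contributes
\[
\int_{\tau-\varepsilon}^{\tau+\varepsilon} f(t,\tilde x(t),s_\varepsilon)\,dt=2\varepsilon\,f\bigl(\tau,\hat x(\tau),\tfrac{a+b}{2}\bigr)+o(\varepsilon).
\]
Since the whole segment $\{\tau\}\times\{\hat x(\tau)\}\times[a,b]$ lies in $\Gamma$, the strong Legendre condition $f_{\dot x\dot x}>0$ makes $v\mapsto f(\tau,\hat x(\tau),v)$ strictly convex on $[a,b]$, so the midpoint inequality is strict and the cost drops at order $\varepsilon$ with a strictly positive coefficient. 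This contradicts optimality of $\hat x$, hence $\dot{\hat x}$ has no jumps on $(t_0,t_1)$. The conclusion $\hat x\in C^1[t_0,t_1]$ then follows from the fact that the right-derivative of a convex function is right-continuous (and the left-derivative left-continuous), so the continuous derivative on $(t_0,t_1)$ extends continuously to both endpoints.

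The main obstacle I anticipate is the junction admissibility check: one must exclude that corners of $\hat x$ accumulating toward $\tau$ destroy the strict slope inequalities at $\tau\pm\varepsilon$. This is resolved by monotonicity of $\dot{\hat x}$, which forces $\dot{\hat x}(\tau-\varepsilon\pm 0)\to a$ and $\dot{\hat x}(\tau+\varepsilon\pm 0)\to b$ as $\varepsilon\to 0^+$, producing the required strict gap $s_\varepsilon-\dot{\hat x}(\tau-\varepsilon-0)\to(b-a)/2>0$ (and analogously on the right). Everything else reduces to a first-order Taylor expansion and the strict convexity supplied by the Legendre condition.
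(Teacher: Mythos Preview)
Your proof is correct and follows essentially the same approach as the paper: both replace $\hat x$ on $[\tau-\varepsilon,\tau+\varepsilon]$ by the chord and exploit strict convexity of $f$ in $\dot x$ (from the Legendre condition on the segment $\{\tau\}\times\{\hat x(\tau)\}\times[a,b]\subset\Gamma$) to produce a decrease of order $\varepsilon$. Your estimation of the gain via the direct asymptotics $\int f\,dt = \varepsilon\bigl[f(\cdot,a)+f(\cdot,b)\bigr]+o(\varepsilon)$ versus $2\varepsilon\, f\bigl(\cdot,\tfrac{a+b}{2}\bigr)+o(\varepsilon)$ is slightly slicker than the paper's, which instead fixes a quantitative lower bound $f_{\dot x\dot x}\ge\gamma>0$ on a box $\Pi$, expands to second order, and then separately shows the linear cross term is $O(\varepsilon^2)$.
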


\begin{proof}
	
	It is obvious that $\dot{\hat x}$ is a monotonically increasing function. Thus, there exists right and left derivatives at~$t_0$ and~$t_1$, respectively. They are finite, since 
	\[
	-\infty<s_0\le \dot x(t_0+0)\le \dot x(t_1-0)\le s_1<\infty. 
	\]
	\noindent Moreover, the function~$x$ is convex, so we only need to prove that, the derivative~$\dot x(\tau)$ exists for any~$\tau\in(t_0,t_1)$, since the derivatives of convex functions are always monotonic (see~\cite{Rockafellar}).
	
	We prove the theorem by contradiction. Suppose that there exists a moment $\tau\in(t_0,t_1)$, such that $\dot{\hat x}(\tau+0)-\dot{\hat x}(\tau-0)=\sigma>0$. 
	
	At first, let us choose a neighborhood $\Pi$ where all needed points will lie. We can do it in the following way. The graph $\Gamma$ contains the (vertical) segment that joins the points $(\tau,\hat x(\tau),\dot{\hat x}(\tau-0))$ and $(\tau,\hat x(\tau),\dot{\hat x}(\tau+0))$. Under the conditions of the theorem, the inequality $f_{\dot x\dot x}(t,x,\dot x)>0$ is fulfilled for all points on the segment. Hence there exist a number $\gamma>0$ and a rectangular neighbourhood of the segment $\Pi=[\tau-\varepsilon_0;\tau+\varepsilon_0]\times[\hat x(\tau)-\varepsilon_0;\hat x(\tau)+\varepsilon_0]\times[\dot{\hat x}(\tau-0)-\varepsilon_0,\dot{\hat x}(\tau+0)+\varepsilon_0]$ such that $f_{\dot x\dot x}(t,x,\dot x)\ge\gamma>0$ for all points $(t,x,\dot x)\in\Pi\subset\R^3$. In addition, the number $\varepsilon_0>0$ can be chosen in such a way that the set $[\tau-\varepsilon_0;\tau+\varepsilon_0]\times[\hat x(\tau)-\varepsilon_0;\hat x(\tau)+\varepsilon_0]$ belongs to $U$, since $(\tau,\hat x(\tau))\in\Int U$ by the conditions of the theorem.
	
	Let us construct a variation of $\hat x$ having the form of a cut-off function. Let $\varepsilon>0$. We set
	\[
	x_{\varepsilon}(t) = 
	\begin{cases}
	\frac{1}{2\varepsilon}\big((t-\tau+\varepsilon)\hat x(\tau+\varepsilon) + (\tau+\varepsilon-t)\hat x(\tau-\varepsilon)\big)&\mbox{if } t\in[\tau-\varepsilon;\tau+\varepsilon];\\
	\hat x(t)&\mbox{otherwise}.\\
	\end{cases}
	\]
	\noindent In other words, the convex (continuous) function $x_\varepsilon$ is obtained from $\hat x$ by replacing its values on the interval $[\tau-\varepsilon;\tau+\varepsilon]$ by the values of the corresponding linear function.
	
	If $\varepsilon$ is sufficiently small, then the parallelepiped $\Pi$ contains points $(t,\hat x(t),\dot{\hat x}(t))$ for $t=\tau\pm\varepsilon$. Consequently, the graph of $x_\varepsilon$ belongs to $\Pi$. Hence the integral $I(x_\varepsilon)$ is well defined, and
	\[
	I(x_\varepsilon) - I(\hat x) = 
	\int_{\tau-\varepsilon}^{\tau+\varepsilon}
	\big[
	f(t,x_\varepsilon(t),s_\varepsilon) - \hat f(t)
	\big]\,dt,
	\]
	
	\noindent where $s_\varepsilon = (x(\tau+\varepsilon)-x(\tau-\varepsilon))/2\varepsilon$. 
	
	Let us remark that if the number $\varepsilon$ is sufficiently small, then points $(t,\hat x(t),s_\varepsilon)$ and $(t,x_\varepsilon(t),s_\varepsilon)$ belong to $\Pi$ for all $t\in[\tau-\varepsilon;\tau+\varepsilon]$. Indeed, since $s_\varepsilon\in[\dot{\hat x}(\tau-\varepsilon+0);\dot{\hat x}(\tau-\varepsilon-0)]$, it follows that, for $\varepsilon\to0$, we have $\dot{\hat x}(\tau\mp\varepsilon\pm0)\to\dot{\hat x}(\tau\pm0)$ in view of monotonicity of $\dot{\hat x}(t)$. Hence, for a sufficiently small $\varepsilon$, we have $s_\varepsilon\in[\dot{\hat x}(\tau-0)-\varepsilon_0,\dot{\hat x}(\tau+0)+\varepsilon_0]$. The inclusion $t\in[\tau-\varepsilon_0;\tau+\varepsilon_0]$ is fulfilled trivially for $\varepsilon<\varepsilon_0$, and the inclusion $x(t)\in [\hat x(\tau)-\varepsilon_0;\hat x(\tau)+\varepsilon_0]$ follows from the continuity of~$\hat x$.
	
	According to what has been said above, it follows that
	\[
	|f(t,x_\varepsilon(t),s_\varepsilon)-f(t,\hat x(t),s_\varepsilon)| \le 
	\sup_{\Pi}|f_x|\, |x_\varepsilon(t)-\hat x(t)| =
	O(\varepsilon),
	\]
	
	\noindent because $|x_\varepsilon(t)-\hat x(t)|=O(\varepsilon)$ in view of the boundedness of the derivative  $|\dot{\hat x}|$. Hence
	\[
	I(x_\varepsilon) - I(\hat x) = 
	\int_{\tau-\varepsilon}^{\tau+\varepsilon}
	\big[
	f(t,\hat x(t),s_\varepsilon) - \hat f(t)
	\big]\,dt + O(\varepsilon^2).
	\]
	
	Now let us use the strong Legendre condition (i.e., the strict convexity of $f$ relative to $\dot x$)
	\[
	f(t,\hat x(t),\dot{\hat x}(t))\ge
	f(t,\hat x(t),s_\varepsilon) +
	f_{\dot x}(t,\hat x(t),s_\varepsilon)(\dot{\hat x}(t)-s_\varepsilon) +
	\gamma (\dot{\hat x}(t)-s_\varepsilon)^2.
	\]
	
	\noindent The inequality is valid, because the points $(t,\hat x(t),\dot{\hat x}(t))$ and $(t,\hat x(t),s_\varepsilon)$ belong to $\Pi$ for $t\in[\tau-\varepsilon;\tau+\varepsilon]$. Hence
	\[
	I(x_\varepsilon)-I(\hat x) \le -\gamma\int_{\tau-\varepsilon}^{\tau+\varepsilon}
	(\dot{\hat x}(t)-s_\varepsilon)^2\,dt +
	\big|
	\int_{\tau-\varepsilon}^{\tau+\varepsilon}
	f_{\dot x}(t,\hat x(t),s_\varepsilon)(\dot{\hat x}(t)-s_\varepsilon)
	\,dt
	\big|+
	O(\varepsilon^2).
	\]
	
	Since
	\[
	|f_{\dot x}(t,\hat x(t),s_\varepsilon)-f_{\dot x}(\tau,\hat x(\tau),s_\varepsilon)|\le
	\sup_\Pi|f_{t\dot x}||t-\tau| + \sup_\Pi|f_{x\dot x}||\hat x(t)-\hat x(\tau)| =		
	O(\varepsilon),
	\]	
	\noindent we obtain
	\[
	\big|
	\int_{\tau-\varepsilon}^{\tau+\varepsilon}
	f_{\dot x}(t,\hat x(t),s_\varepsilon)(\dot{\hat x}(t)-s_\varepsilon)
	\big| \le
	O(\varepsilon^2) + |f(\tau,\hat x(\tau),s_\varepsilon)|
	\big|
	\int_{\tau-\varepsilon}^{\tau+\varepsilon}
	(\dot{\hat x}(t)-s_\varepsilon)\,dt
	\big|.
	\]
	
	\noindent Let us note that the last integral is zero. So
	
	\[
	I(x_\varepsilon)-I(\hat x) \le -\gamma\int_{\tau-\varepsilon}^{\tau+\varepsilon}
	(\dot{\hat x}(t)-s_\varepsilon)^2\,dt + O(\varepsilon^2).
	\]
	
	We now consider the location of the point $s_\varepsilon$ relative to the interval $[\dot{\hat x}(\tau-0);\dot{\hat x}(\tau+0)]$ of length 
	$\sigma>0$. Let $s_\varepsilon$ lie not higher than the midpoint of the interval (the second case is considered similarly). Since $\dot{\hat x}$ is increasing for $t\ge\tau$, we have $\dot{\hat x}(t)-s_\varepsilon\ge\sigma/2$; thus,
	
	\[
	\int_{\tau-\varepsilon}^{\tau+\varepsilon}
	(\dot{\hat x}(t)-s_\varepsilon)^2\,dt \ge
	\int_{\tau}^{\tau+\varepsilon}(\dot{\hat x}(t)-s_\varepsilon)^2\,dt \ge
	\left(\frac{\sigma}{2}\right)^2\varepsilon
	\]
	
	\noindent and
	
	\[
	I(x_\varepsilon)-I(\hat x) \le -\gamma\left(\frac{\sigma}{2}\right)^2\varepsilon + O(\varepsilon^2),
	\]
	
	\noindent which contradicts the optimality of the trajectory $\hat x$.
\end{proof}

\section{Pontryagin's Maximum Principle for the key problem}

Let us check that the conditions of Theorem~\ref{thm:convex_is_C_one} for problem~\eqref{eq:maxwell_integral_value} are fulfilled. Since
\begin{equation}
\label{eq:f}
f(p,v,v') = \frac{2\sqrt{v^2-p^2}(v')^2}{(1+v^2)^2} - \frac{pv'-v}{v(1+v^2)\sqrt{v^2-p^2}},
\end{equation}

\noindent we have~$f_{v'v'}>0$ for all points in the interior of the set~$U=\{(p,v):0\le p\le v\le p+M\}$. Thus, any optimal solution must be at least $C^1$-smooth in $\Int U$.

We use the following main idea to construct the optimal synthesis in problem~\eqref{eq:maxwell_integral_value}. We apply Pontryagin's Maximum Principle for the key problem and use it to obtain the field of extremals, which cover a subdomain in~$U$. Then the classical Legendre construction allows us to use the solutions of a Riccati equation to show that any sufficiently close $PC^2$-curve\footnote{A function $u$ belongs to class $PC^2$ if its first derivative $u'$ is $C^1$, and its second derivative $u''$ is piecewise continuous.} gives larger values to the functional~$J$ than the corresponding extremal from the field.

So we consider the following optimal control problem:
\begin{equation}
\label{problem:main_maxwell}
\begin{gathered}
J(v)=\int_0^{p_0} f(p,v,v')\,dp\to\min;\\
v' = w ;\quad w'=\theta\ge 0;\\
v(0)=M;\quad v(p_0)=p_0<\infty,
\end{gathered}
\end{equation}
\noindent where the integrand is given by formula~\eqref{eq:f}. The variable~$p$ plays the role of time, $v$ and $w$ are the phase variables, and $\theta$ is the control.

For an optimal solution, we have $v(0)=M$, since $\min u=-M$, but, generally speaking, the condition~$v(p_0)=p_0$ may not be fulfilled on the optimal solution. Also we have dropped the condition~$(p,v(p))\in U$. Nonetheless, we shall construct an optimal synthesis in a subdomain of~$U$ (with different~$p_0$ and~$M$) using exactly these conditions (see Sec.~\ref{sec:field_of_extremals} below). After that, we shall prove that every constructed solution is a local minimum (see Theorem~\ref{thm:second_var} below).

Let us remark that~$f$ has a singularity at the right endpoint~$v(p_0)=p_0$. Consequently, Pontryagin's Maximum Principle cannot be applied directly. In other words, formally speaking, Pontryagin's Maximum Principle may not be a necessary optimality condition. Nonetheless, it appears that it is a sufficient optimality condition in the key problem. Namely, in Sec.~\ref{sec:field_of_extremals}, we construct a field of extremals that satisfy the equations of Pontryagin's Maximum Principle and prove their local optimality in Sec.~\ref{sec:second_var}.

So let us write down Pontryagin's Maximum Principle for~$PC^2$ extremals. It states that there exist a number~$\lambda_0\ge 0$ and functions $\varphi(p)$ and $\psi(p)$ (not equal to 0 at the same time) such that the equations $\varphi'=-H'_v$, $\psi'=-H_w$ are held for
\begin{equation}
\label{eq:PMP}
H=-\lambda_0 f(p,v,w) + \varphi w + \psi \theta.
\end{equation}

The function $\psi(p)$ must be nonpositive and must vanish on the support of $\theta$, $\psi|_{\mathrm{supp}\,\theta}=0$. The orthogonality condition for the adjoint variables are
\[
\psi(0)=0,\quad \psi(p_0)=0\quad\mbox{and}\quad \varphi(p_0)=H(p_0).
\]

Let us remark that the function $f$ has a singularity at the point $p_0$, since $v(p_0)=p_0$ and the denominator of the second fraction in~\eqref{eq:f} becomes zero. So it may appear that, on the extremals, $H\to\infty$ as $p\to p_0-0$. Hence, the last orthogonality condition must be understood as follows: $\varphi-H\to 0$ as~$p\to p_0-0$.

Let us show that $\lambda_0\ne 0$. Indeed, if $\lambda=0$, then $\varphi' = 0$ and $\psi' = -\varphi$. Hence $\psi(p)$ is a linear function and $\psi(0)=\psi(p_0)=0$. That is, $\psi=\varphi=0$ and all the adjoint variables are zero, which is forbidden.

We take $\lambda_0=1$. Therefore,
\[
\boxed{\varphi' = f_v;\quad \psi' = f_{v'} - \varphi}.
\]
\noindent By Theorem~\ref{thm:convex_is_C_one}, any optimal solution is $C^1$-smooth on $\Int U$, i.e., for~$p\in(0;p_0)$. Hence we see that the functions $\varphi$ and $\psi$ are also $C^1$-smooth on~$(0;p_0)$.

The control $\theta$ is defined by the adjoint variable $\psi$, which can be found by Cauchy's formula: for any $p_1$ and $p_2$, we have
\begin{equation}
\label{eq:psi_eq_int}
\psi(p_2)= \psi(p_1) + \psi'(p_1)(p_2-p_1) + 
\int_{p_1}^{p_2} (p_2-p) \left(\frac{d}{dp}f_{v'}-f_v\right)\,dp.
\end{equation}

The Maximum Principle implies that if $\psi<0$ on a segment, then $\theta=0$ and the trajectory $v$ is an affine function on this segment. The Maximum Principle also admits singular arcs, when $\psi\equiv0$ in an interval. We claim that the trajectory must be singular in a neighbourhood of $p_0$. This follows from the following proposition.

\begin{prop}
	\label{prop:v1_should_be_zero}
	If $v'(p_0)=1-a$ for $a>0$\footnote{The case $a<0$ needs not to be considered, since, in this case, the condition $v>p$ fails in a left neighbourhood of $p_0$.}, then the trajectory $v$ does not satisfy Pontryagin's Maximum Principle. 
\end{prop}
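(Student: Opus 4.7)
The plan is to assume that $v$ satisfies PMP with $v(p_0) = p_0$ and $v'(p_0) = 1-a$ for some $a > 0$, and to derive a contradiction with the PMP sign condition $\psi \le 0$. By Theorem~\ref{thm:convex_is_C_one}, $v \in C^1$ near $p_0$, so in a left neighbourhood of $p_0$ the trajectory lies either on an affine arc (where $\theta \equiv 0$ and hence $v(p) = ap_0 + (1-a)p$) or on a singular arc (where $\psi \equiv 0$). I would handle the affine case in detail; the singular case is excluded afterwards by a parallel expansion on the singular-arc formula $\theta = (f_v - f_{v'p} - f_{v'v}w)/f_{v'v'}$, whose numerator and denominator both vanish at $p_0$, forcing $v'(p_0) = 1$ for consistency.

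On the affine arc, $w \equiv 1-a$, and the orthogonality $\varphi - H \to 0$ (using $H = -f + \varphi w$ since $\psi\theta = 0$) reads $a\,\varphi(p) + f(p) \to 0$, whence $\varphi(p) = -f(p)/a + o(1)$ as $p \to p_0^-$. Setting $s = p_0 - p$ and using the identities $v - p = a s$ and $pv' - v = -a p_0$ valid on the affine arc, one gets the dominant asymptotics
\[
f(p) \sim \frac{\sqrt{a}}{A\sqrt{s}}, \qquad f_{v'}(p) \sim -\frac{1}{A\sqrt{a}\sqrt{s}}, \qquad A = (1+p_0^2)\sqrt{2p_0},
\]
so that the leading $s^{-1/2}$ singularities in $\psi' = f_{v'} - \varphi$ cancel. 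To extract the finite part one uses the closed-form simplification
\[
f_{v'} + \frac{f}{a} \;=\; \frac{2(1-a^2)\sqrt{v^2-p^2}}{a(1+v^2)^2} + \frac{v-p}{a\,v(1+v^2)\sqrt{v^2-p^2}},
\]
valid on the affine arc, together with the ODE $r' = (f_p + f_v)/a$ for the correction $r := \varphi + f/a$ coming from $\varphi' = f_v$ with boundary value $r(p_0) = 0$. A sub-leading expansion of both contributions fixes the sign of $\psi'(p)$ on $(p_0-\delta, p_0)$; integrating against $\psi(p_0) = 0$ then yields $\psi(p) > 0$ just to the left of $p_0$, contradicting $\psi \le 0$.

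The main obstacle is the cascade of cancellations among leading singular terms at $p_0$: the $s^{-1/2}$ parts of $f_{v'}$ and $\varphi$ cancel in $\psi'$, and the $s^{-3/2}$ parts similarly cancel in $\psi'' = \tfrac{d}{dp}f_{v'} - f_v$ (the Euler--Lagrange expression restricted to the affine arc). One must therefore carry the asymptotic expansion of each quantity to a sufficiently sub-leading order and keep careful track of the signs of the surviving coefficients. The closed-form identity for $f_{v'} + f/a$ above reduces much of this bookkeeping to a concrete algebraic computation involving only elementary functions of $s$, $a$, and $p_0$.
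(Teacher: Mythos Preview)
Your overall strategy---derive $\psi'(p_0)=0$ from the transversality condition and then use the local asymptotics to force $\psi>0$ just to the left of $p_0$---is the paper's strategy too. The difference is in how the sign is extracted, and your version has a gap.

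The paper does \emph{not} split into affine versus singular arcs. It differentiates once more and writes
\[
\psi'' \;=\; \frac{d}{dp}f_{v'}-f_v \;=\; f_{v'v'}\,v'' \;-\; \bigl(f_v-f_{pv'}-f_{vv'}v'\bigr).
\]
The first summand is nonnegative for \emph{every} admissible trajectory, simply because $v''\ge 0$ and $f_{v'v'}>0$; no hypothesis on the arc type is needed. For the second summand one uses the factorization
\[
f_v-f_{pv'}-f_{vv'}v' \;=\; f_{v'v'}\Bigl(-\frac14\frac{(v'-1)^2}{v-p}-\frac14\frac{(v'+1)^2}{v+p}+\frac{2v(v')^2}{1+v^2}\Bigr);
\]
since $v'\to 1-a$ with $a>0$ and $v-p\sim a(p_0-p)$, the bracket is $\sim -\frac{a}{4(p_0-p)}$, and with $f_{v'v'}=O(\sqrt{p_0-p})$ this gives $\psi''\to+\infty$. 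Together with $\psi(p_0)=\psi'(p_0)=0$ one concludes $\psi>0$ on a left neighbourhood of $p_0$, contradicting $\psi\le 0$.

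Your dichotomy ``affine or singular on $(p_0-\delta,p_0)$'' is not justified: PMP does not prevent the zero set of $\psi$ from accumulating at $p_0$ without containing a full interval, so the trajectory need not be purely affine nor purely singular on any left neighbourhood. The paper's single inequality $f_{v'v'}v''\ge 0$ is exactly what absorbs this possibility for free and makes your sub-leading expansion of $\psi'$ (and the separate treatment of the singular case) unnecessary. Incidentally, your singular-case remark that ``numerator and denominator both vanish'' is off: with $a>0$ the numerator $f_v-f_{v'p}-f_{v'v}w$ actually diverges like $-c/\sqrt{p_0-p}$; the correct exclusion is that the singular control satisfies $\theta\to -\infty$, violating $\theta\ge 0$. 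But once you pass to $\psi''$ as above, this side argument is not needed at all.
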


Indeed, the proposition implies that if the trajectory $v$ is nonsingular on an interval $[p_0-\Delta p;p_0]$, $\Delta p>0$, then $v'=\const$ on this interval, and since $v\ge p$, we have $v'(p)=1$ and $v(p)=p$ for $p\in[p_0-\Delta p;p_0]$. Therefore, in this case, $p_0$ does not fulfill the condition $p_0=\sup\{p:v(p)>p\}$ and it can be reduced.

\begin{proof}[Proof of proposition~\ref{prop:v1_should_be_zero}]
	Let us estimate the asymptotic of the function $\varphi(p)$ as $p\to p_0-0$. We use the orthogonality conditions: $\alpha=H-\varphi\to 0$ as $p\to p_0-0$. Since $\psi\theta\equiv0$, it follows that, on the trajectory, $H=-f+\varphi v'$. While expressing $\varphi$ in terms of $\alpha$, we find 
	\[
	\varphi = \frac{f+\alpha}{v'-1} = -\frac{1}{(1+p_0^2)\sqrt{2p_0}\sqrt{a(p_0-p)}} + o(1).
	\]
	\noindent The function $f_{v'}$ on the trajectory has the same asymptotic as $p\to p_0-0$:
	\[
	f_{v'} = -\frac{1}{(1+p_0^2)\sqrt{2p_0}\sqrt{a(p_0-p)}} + o(1).
	\]
	\noindent Therefore, $\psi'(p)=f_{v'}-\varphi=o(1)$ and, consequently, $\psi'(p_0)=0$.
	
	Let us calculate $\psi''$:
	\[
	\psi'' = \frac{d}{dp}f_{v'} -f_v = f_{v'v'}v'' - (f_v-f_{pv'}-f_{vv'}v').
	\]
	\noindent The first term is nonnegative, since $f_{v'v'}>0$ and $v''\ge 0$. The second term has the form
	\[
	f_v-f_{pv'}-f_{vv'}v'=f_{v'v'}\left(-\frac14\frac{(v'-1)^2}{v-p} - \frac14\frac{(v'+1)^2}{v+p} + \frac{2vv'^2}{1+v^2}\right).
	\]
	\noindent Here the second and third terms on the right-hand side are bounded and the first term tends to $-\infty$ as $p\to p_0-0$ and has order $(p-p_0)^{-1}$. Since $f_{v'v'}\ge 0$ and $f_{v'v'}=O(\sqrt{p_0-p})$, we obtain $\psi''\to+\infty$ as $p\to p_0-0$. Therefore, $\psi''>0$ in a left neighbourhood of the point $p_0$. Taking into account the fact that $\psi(p_0)=\psi'(p_0)=0$, we obtain $\psi(p)>0$. This contradicts Pontryagin's Maximum Principle.	
\end{proof}

Hence we seek a trajectory $v$ under the condition $v'(p_0)=1$. A nonsingular trajectory cannot meet this condition. Hence the trajectory must be singular in a neighbourhood of $p_0$ (that is, $\psi\equiv0$ in a neighbourhood of $p_0$). To find the singular control on the interval $[p_1;p_2]$, one must differentiate twice the condition $\psi\equiv0$. The result will be the classical Euler--Lagrange equation $df_{v'}/dp-f_v=0$, which must be fulfilled on singular arcs. The singular control~$\theta=v''$ can be obtained from this equation. A direct computation gives the following equation on singular arcs, which holds for a.e.~$p\in[p_1;p_2]$,

\begin{equation}
\label{eq:main_Lagrange_eq}
\boxed{v''= \frac{f_v-f_{pv'}-v'f_{vv'}}{f_{v'v'}}=-\frac14\frac{(v'-1)^2}{v-p} - \frac14\frac{(v'+1)^2}{v+p} + \frac{2vv'^2}{1+v^2}.}
\end{equation}

The equation has a singularity in a neighbourhood at $v(p_0)=p_0$, and a further investigation is needed. This will be done in Sec.~\ref{sec:analitic_expansion}.

We need to complexify  equation~\eqref{eq:main_Lagrange_eq}. It would be useful to have one of the famous fully understood types of complex equations. Equation~\eqref{eq:main_Lagrange_eq} is of type most closely resemble with that of Painlev\'e. Painlev\'e equations do not have moving ramification points. The Painlev\'e equations of second order are classified. In this case, the right-hand side should be a quadratic polynomial in the first derivative $v'$ and the term $v'^2$ must have a coefficient of the form $\sum_k a_k/(v-v_k)$, which correspond exactly to our case. But, as is the case in Painleve\'e equations, the corresponding residue at $v_k$ must be equal to $1+1/N$, where $N$ belongs to $\mathbb{Z}$. Equation~\eqref{eq:main_Lagrange_eq} almost has the described structure, but the residue equals -1/4, which is not equal to $1+1/N$. Hence, equation~\eqref{eq:main_Lagrange_eq} does have moving ramification points and is not of Painlev\'e type. The line $\{v=p\}$ consists of ramification points. Nevertheless, the methods used in the investigation of Painlev\'e equations~\cite[p.149-160]{Golubev} appear to be helpful in our case as well.

\medskip

Before investigating singular extremals, let us find conditions on a nonsingular interval. The interval $[p_1;p_2]$ is a maximal nonsingular interval if the following equation is fulfilled:

\begin{equation}
\label{eq:psi_p_not_sing}
\psi(p)<0\mbox{ for } p\in(p_1;p_2)\quad\mbox{and}\quad\psi(p_1)=\psi(p_2)=0.
\end{equation}

\noindent Indeed, if $p_1>0$ and $p_2<p_0$, then $\psi(p_1)=\psi(p_2)=0$ in view of the maximality of the nonsingular interval. And if $p_1=0$ or $p_2=p_0$, then $\psi(0)=0$ and $\psi(p_0)=0$ in view of the orthogonality condition.

In Sec.~\ref{sec:field_of_extremals}, we shall cover a sub-region in $U$ by extremals with one singular and one nonsingular intervals, and, in Sec.~\ref{sec:second_var}, we will prove the local optimality of the obtained extremals (see Theorem~\ref{thm:second_var} below).

\section{Basic properties of singular extremals}
\label{sec:analitic_expansion}

The Euler--Lagrange equation~\eqref{eq:main_Lagrange_eq} for singular extremals has a singularity at the right end, since the denominator of the first fraction vanishes at the point~$v(p_0)=p_0$. Thus, the behavior of its solutions in a neighborhood of this point deserves an accurate study. Unfortunately, the authors do not know any literature where equations with singularities of this type are studied. Actually, in this section, we are compelled to develop some basic ODE results for this type of singularities. 

We start with representing equation~\eqref{eq:main_Lagrange_eq} in general form. Let us carry out the time shift~$t=p-p_0$ and change~$x=v-p$. Then (for a.e.~$t$) we have
\[
\ddot x = -\frac14 \frac{\dot x^2}{x} - \frac14\frac{(\dot x+2)^2}{(x+2t+2p_0)} +
\frac{2(x+t+p_0)(\dot x+1)^2}{1+(x+t+p_0)^2},
\]

\noindent where~$x$ satisfies the initial condition~$x(0)=0$. Let us pick out the main term and rewrite the previous equation in the following general form:
\begin{equation}
\label{eq:analytic_with_singularity}
\ddot x = \lambda \frac{\dot x^2}{x} + g(t,x,\dot x)
\mbox{ for a.e.\ }t,
\qquad\mbox{and}\qquad x(0) = 0.
\end{equation}

Note that, for a given function~$x(t)$, the right-hand side is, possibly, not well defined in any punctured neighborhood of the point~$t=0$. Moreover, if we multiply the equation by~$x$, we get an equation with trivial solution~$x\equiv0$ (or $v=p$). But, in fact, we are interested in a solution~$v$, which is convex and satisfies the inequality~$v>p$ for~$p<p_0$. Thus, $v'<1$ for~$p<p_0$. So we are searching for a solution of equation~\eqref{eq:analytic_with_singularity} such that~$\dot x\ne 0$ in a left punctured neighborhood of~$t=0$.

\begin{thm}
	\label{thm:analytic_solution_extsts_and_unique}
	Assume that~$0<|\lambda|<3/8$, $g(0,0,0)\ne 0$, and the function~$g$ is continuous together with its partial derivatives~$g_x$ and~$g_{\dot x}$ in a neighborhood of the point~$(0,0,0)$. Then there exists a number~$\tau>0$ such that equation~\eqref{eq:analytic_with_singularity} has a unique $C^2$-solution on~$t\in[-\tau;\tau]$ with the initial condition~$x(0)=0$ and the property that $\dot x \ne 0$ in a left punctured neighborhood of~$t=0$. This solution satisfies\footnote{The similar result holds for a solution~$\tilde x$ with the property that~$\dot{\tilde x}\ne 0$ in a right punctured neighborhood of~$t=0$. Moreover, the solutions~$x$ and~$\tilde x$ must coinside, since both do not vanishes in a punctured two-sided neighborhood of~$t=0$ as~$\ddot x(0)=\ddot{\tilde x}(0)\ne 0$.}
	\[
	\dot x(0)=0\quad\mbox{and}\quad\ddot x(0)= \frac{g(0,0,0)}{1-2\lambda}.
	\]
	\noindent Additionally, if the function~$g$ is analytic, then this solution is analytic too.
\end{thm}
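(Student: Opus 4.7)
I first extract the Taylor 2-jet at $t=0$ directly from the equation. Any $C^2$ solution has $\ddot x$ bounded near $t=0$ and $g(t,x,\dot x)\to g(0,0,0)$, so the singular term $\lambda\dot x^2/x$ must stay bounded. Taylor's formula then shows that $\dot x(0)\ne 0$ would force $\dot x^2/x\sim \dot x(0)^2/(\dot x(0)\,t)\to\infty$, a contradiction; hence $\dot x(0)=0$. Consequently $\dot x^2/x\to 2\ddot x(0)$, and passing to the limit $t\to 0$ in the equation yields $\ddot x(0)(1-2\lambda)=g(0,0,0)$, whence $\ddot x(0)=c:=g(0,0,0)/(1-2\lambda)$. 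In particular the 2-jet at~$0$ of any admissible $C^2$ solution is uniquely determined, which will supply uniqueness \emph{a posteriori}.

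Next I desingularize by the substitution $x(t)=t^2\phi(t)$. Using $\dot x^2/x=4\phi+4t\dot\phi+t^2\dot\phi^2/\phi$, the ODE transforms into
\[
L\phi := t^2\ddot\phi + 4(1-\lambda)\,t\dot\phi + 2(1-2\lambda)\phi = g\bigl(t,\,t^2\phi,\,2t\phi+t^2\dot\phi\bigr) + \lambda t^2\frac{\dot\phi^2}{\phi}.
\]
The linear operator $L$ factors as $L=(t\partial_t+1)(t\partial_t+2-4\lambda)$, with eigenvalues $a_k=(k+1)(k+2-4\lambda)\ge a_0=2(1-2\lambda)>0$ on~$t^k$ for every integer $k\ge 0$ under the hypothesis $|\lambda|<3/8<1/2$. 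Both homogeneous solutions of $L\chi=0$ (namely $t^{-1}$ and $t^{4\lambda-2}$) blow up at~$0$, and the factorization yields an explicit inverse $L^{-1}$ by two successive integrations, bounded by $1/a_0$ in the sup norm. Setting $\phi=c/2+\psi$ with $\psi(0)=0$ then converts the problem into the fixed-point equation
\[
\psi = T\psi := L^{-1}\Psi(\,\cdot\,,\psi,\dot\psi),
\]
where $\Psi(0,0,\dot\psi)\equiv 0$ by the very choice of~$c$ (so $(T\psi)(0)=0$), and $\Psi$ is $C^1$ in $(\psi,\dot\psi)$ and continuous in~$t$ near the origin.

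Finally I close the argument by Banach's fixed point theorem on a small ball of a Banach space of the form $E_\tau=\{\psi\in C^1([-\tau,\tau]):\psi(0)=0\}$ with a suitable norm (for example the weighted norm $\sup|\psi(t)|/|t|+\sup|\dot\psi(t)|$). A direct calculation gives $\Psi_\psi=O(t)$ and $\Psi_{\dot\psi}=O(t^2)$ near the origin, so the Lipschitz constant of $\Psi$ on the ball shrinks with~$\tau$; combined with the uniform bound $\|L^{-1}\|\le 1/a_0$, for $\tau$ small enough $T$ is a contraction and yields the claimed unique $C^2$ solution. The uniqueness part of the theorem then follows from the uniqueness of the fixed point together with the 2-jet uniqueness above, since any two admissible $C^2$ solutions give rise to two elements of $E_\tau$ lying in the same contraction ball. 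The main obstacle is the quantitative contraction estimate: closing the product of $\|L^{-1}\|=1/(2(1-2\lambda))$ with the Lipschitz constants of~$\Psi$ requires the explicit lower bound $a_0>1/2$, which is exactly the content of $|\lambda|<3/8$. For the analytic case I would rerun the same fixed-point argument in a Banach space of holomorphic functions on the disc $\{|t|<\tau\}$ with the analogous weighted norm: since $L^{-1}$ manifestly preserves analyticity and $\Psi$ is analytic when~$g$ is, the unique fixed point is analytic and, by the uniqueness just established, coincides with the real $C^2$ solution.
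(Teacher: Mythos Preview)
Your desingularization $x=t^2\phi$ and use of the Euler operator $L=(t\partial_t+1)(t\partial_t+2-4\lambda)$ is a legitimate alternative to the paper's method, and the existence part goes through. However, two points deserve scrutiny.

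First, your explanation of where $|\lambda|<3/8$ enters is incorrect. You claim the contraction requires $a_0=2(1-2\lambda)>1/2$, but by your own computation $\Psi_\psi=O(t)$ and $\Psi_{\dot\psi}=O(t^2)$, so the Lipschitz constant of $\Psi$ on a small ball is $O(\tau)$ and the product with $\|L^{-1}\|$ tends to $0$ regardless of the value of $a_0>0$. Your argument therefore only needs $\lambda<1/2$ (so that both Euler factors are invertible), not $|\lambda|<3/8$. In the paper the bound $3/8$ is genuine and arises from a completely different mechanism: the authors work directly with the integral form $x=F(x)$ in the $C^2$-norm $\|\ddot x\|$ and prove a key nonlinear estimate (their Lemma~\ref{lm:key_estimate}),
\[
\Big|\frac{\dot x^2}{x}-\frac{\dot y^2}{y}\Big|\le \frac{8}{3}\,
\frac{\|\ddot x\|\,\|\ddot y\|\,\|\ddot x-\ddot y\|}{\inf|\ddot x|\,\inf|\ddot y|},
\]
so the contraction factor is $\frac{8}{3}|\lambda|+o(1)$ as $\tau\to 0$, and this is $<1$ precisely when $|\lambda|<3/8$.

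Second, there is a gap in your uniqueness argument. You assert that ``any two admissible $C^2$ solutions give rise to two elements of $E_\tau$ lying in the same contraction ball'', but this is not automatic: if $x\in C^2$ with $x(0)=\dot x(0)=0$, then $\phi=x/t^2$ is continuous with $\phi(0)=\ddot x(0)/2$, yet $\dot\phi=(t\dot x-2x)/t^3$ need not stay bounded as $t\to 0$ (Taylor's formula only gives $t\dot x-2x=\int_0^t(2s-t)(\ddot x(s)-\ddot x(0))\,ds=o(t^2)$). So a generic $C^2$ solution of the ODE does not obviously yield $\psi=\phi-c/2$ in your weighted space $E_\tau$, and your fixed-point uniqueness does not immediately cover it. The paper sidesteps this by running the contraction directly in $\{x\in C^2:x(0)=\dot x(0)=0\}$ with norm $\|\ddot x\|$: every $C^2$ solution with the forced value $\ddot x(0)=c$ automatically lies in the ball $\|\ddot x-c\|\le\varepsilon|c|$ once $\tau$ is shrunk, so uniqueness of the fixed point there settles the matter. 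If you want to keep your substitution, you must either show that the ODE itself forces $\dot\phi$ bounded, or run your contraction in a norm (such as $\sup|\ddot x|$) that every $C^2$ solution manifestly carries.
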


Equation~\eqref{eq:main_Lagrange_eq} satisfies all the above conditions, since we have
\[
\lambda = -\frac14,\quad 
g = - \frac14\frac{(\dot x+2)^2}{(x+2t+2p_0)} + \frac{2(x+t+p_0)(\dot x+1)^2}{1+(x+t+p_0)^2},\quad
g(0,0,0)=\frac{3 p_0^2-1}{2 p_0(1+p_0^2)},
\]
\noindent and~$g(0,0,0)\ne 0$ for $p_0\ne 1/\sqrt{3}$. Remark that once Theorem~\ref{thm:analytic_solution_extsts_and_unique} is proved, we can apply the classical smooth ODE theory outside the interval $[-\tau;\tau]$.

The proof of Theorem~\ref{thm:analytic_solution_extsts_and_unique} is based on a key estimate, which we shall prove as the separate Lemma~\ref{lm:key_estimate}. Denote
\[
\|\xi\|=\sup_{|t|\le\tau}|\xi(t)|.
\]

\begin{lemma}
	\label{lm:key_estimate}
	Let~$\tau>0$ be an arbitrary number. Suppose that $x,y\in C^2$ for $|t|\le\tau$, and $x(0)=y(0)=\dot x(0)=\dot y(0)=0$. In that case, if $\ddot x(t)\ne 0$ and $\ddot y(t)\ne 0$ for $|t|\le\tau$, then\footnote{The continuous functions~$|\ddot x|$ and~$|\ddot y|$ reach their manima for~$|t|\le\tau$, so the written infima do not vanish.}
	\[
	\left|\frac{\dot x^2}{x} - \frac{\dot y^2}{y}\right| \le 
	\frac{8}{3}\,
	\frac{\|\ddot x\|\|\ddot y\|\|\ddot x-\ddot y\|}
	{\inf_{|t|\le\tau}|\ddot x(t)|\inf_{|t|\le\tau}|\ddot y(t)|}.
	\]
	\noindent This estimate holds for the real case~$x,y,t\in\R$ and for the complex case~$x,y,t\in\mathbb{C}$.
\end{lemma}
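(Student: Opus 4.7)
The plan is to reduce the inequality to a bound on the simpler quantity $A^2/B$, where $A$ and $B$ are weighted averages of $\ddot x$. Since $x(0) = \dot x(0) = 0$, the integral form of Taylor's formula together with the substitution $s = t\sigma$ gives $\dot x(t) = tA(t)$ and $x(t) = t^2 B(t)$, where
\[
A(t) = \int_0^1 \ddot x(t\sigma)\,d\sigma,\qquad B(t) = \int_0^1 (1-\sigma)\,\ddot x(t\sigma)\,d\sigma,
\]
and analogously $\tilde A,\tilde B$ for $y$. Hence $\dot x^2/x = A^2/B$ and $\dot y^2/y = \tilde A^2/\tilde B$, and the apparent divisions by the small quantities $t$ and $t^2$ cancel cleanly.

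From these representations all the elementary bounds follow at once: $|A|\le\|\ddot x\|$ and $|B|\le\|\ddot x\|/2$ (and their analogs for $\tilde A,\tilde B$), $|A-\tilde A|\le\|\ddot x-\ddot y\|$, and $|B-\tilde B|\le\|\ddot x-\ddot y\|/2$. The crucial lower bound $|B|\ge\tfrac12\inf_{|t|\le\tau}|\ddot x|$ comes from a constant-sign argument: $\ddot x$ is continuous and nonvanishing on the connected set $\{|t|\le\tau\}$, hence of constant sign, so $|B|=\int_0^1(1-\sigma)|\ddot x(t\sigma)|\,d\sigma\ge\tfrac12\inf|\ddot x|$ (in the complex case the same conclusion holds once one verifies that $\ddot x$ remains in a half-plane avoiding $0$, which is automatic for $\tau$ small). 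Then the difference of fractions is rewritten via the algebraic identity
\[
\frac{A^{2}}{B}-\frac{\tilde A^{2}}{\tilde B}
=\frac{(A-\tilde A)(A\tilde B+\tilde A B)+A\tilde A(\tilde B-B)}{B\tilde B},
\]
into which one substitutes the bounds above and divides by $|B\tilde B|\ge\tfrac14\inf|\ddot x|\inf|\ddot y|$.

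The main obstacle will be recovering the sharp constant $8/3$. A direct termwise majoration of the numerator yields the weaker constant $6$, because it ignores the cancellation between the two summands; this cancellation is already visible in the case of constant $\ddot x=c_1$, $\ddot y=c_2$, where the true LHS equals $2(c_1-c_2)$ while the two naive bounds carry opposite signs and actually sum to $c_1c_2(c_1-c_2)/2$. I expect one can extract this cancellation via the splitting $A=2B+\delta$ with $\delta=\int_0^1(2\sigma-1)\ddot x(t\sigma)\,d\sigma$, which rearranges to $A^2/B=4(A-B)+\delta^2/B$. The linear part $4\bigl[(A-B)-(\tilde A-\tilde B)\bigr]=4\int_0^1\sigma\bigl(\ddot x(t\sigma)-\ddot y(t\sigma)\bigr)\,d\sigma$ is bounded by $2\|\ddot x-\ddot y\|$ without involving the reciprocals of $B,\tilde B$ at all, while only the residual $\delta^2$-piece carries the factor $1/(B\tilde B)$. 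A careful balancing of these two contributions should produce exactly the announced constant $8/3$.
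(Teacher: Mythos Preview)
Your representation $\dot x^2/x=A^2/B$ with $A=\int_0^1\ddot x(t\sigma)\,d\sigma$, $B=\int_0^1(1-\sigma)\ddot x(t\sigma)\,d\sigma$ is correct, and the identity $A^2/B=4(A-B)+\delta^2/B$ with $\delta=A-2B$ is a nice way to split off the dominant linear piece. But your hope that ``careful balancing'' recovers the constant $8/3$ does not survive computation. Carrying your own bounds through, the linear part gives $|4[(A-B)-(\tilde A-\tilde B)]|\le 2\|\ddot x-\ddot y\|$, while for the quadratic remainder $\delta^2\tilde B-\tilde\delta^2 B=(\delta-\tilde\delta)(\delta\tilde B+\tilde\delta B)+\delta\tilde\delta(\tilde B-B)$ together with $|\delta|\le\tfrac12\|\ddot x\|$, $|B|\le\tfrac12\|\ddot x\|$, $|B\tilde B|\ge\tfrac14\inf|\ddot x|\inf|\ddot y|$ gives a contribution of $\tfrac32\cdot\dfrac{\|\ddot x\|\|\ddot y\|\|\ddot x-\ddot y\|}{\inf|\ddot x|\inf|\ddot y|}$. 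The total constant is $2+\tfrac32=\tfrac72$, not $\tfrac83$. Pointwise sup-norm bounds on the averages $A,B,\delta$ simply do not see enough cancellation. (The constant $7/2$ would still suffice for the application with $\lambda=-1/4$, but it does not prove the lemma as stated.)

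The paper gets the sharp $8/3$ by a different device: it bounds the numerator $\dot x^2y-x\dot y^2$ by differentiating it, writing the derivative as $\dot x\dot y(\dot x-\dot y)+2(\dot xy\ddot x-x\dot y\ddot y)$, and then applies a \emph{second} differentiate--bound--integrate step to $\dot xy-x\dot y$, whose derivative $\ddot xy-x\ddot y$ is split as $\ddot x(y-x)+x(\ddot x-\ddot y)$. Integrating the resulting $O(|t|^2)$ bound back up produces $|\dot xy-x\dot y|\le\tfrac13\|\ddot x\|\|\ddot x-\ddot y\||t|^3$; in your language this is the nontrivial estimate $|A\tilde B-B\tilde A|\le\tfrac13\|\ddot x\|\|\ddot x-\ddot y\|$, whereas the direct split $A(\tilde B-B)+B(A-\tilde A)$ only yields $\|\ddot x\|\|\ddot x-\ddot y\|$. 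That extra factor of $3$, coming from $\int_0^{|t|}s^2\,ds=|t|^3/3$, is precisely what converts $7/2$ into $8/3$. To repair your route you would have to import this iterated integration trick; as written, the proposal has a gap at the constant.

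A side remark: your parenthetical on the complex case (``automatic for $\tau$ small'') does not cover the lemma as stated, which is for arbitrary $\tau$; the paper's lower bound $|x|\ge\tfrac12|t|^2\inf|\ddot x|$ glosses over the same point. In the actual application both proofs only use the lemma inside a small $C^2$-ball around a constant $\ddot x_0\ne0$, where the half-plane condition holds.
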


\begin{proof}
	Let us estimate the numerator~$\dot x^2 y-x\dot y^2$ from above and the denominator~$xy$ from below. Since~$x=\int_0^t (t-s)\ddot x(s)\,ds$, we have~$|x|\ge \frac12|t|^2\inf|\ddot x|$. The same estimate holds for~$|y|$, so
	\[
	|xy| \ge \frac14|t|^4\inf|\ddot x|\inf|\ddot y|.
	\]
	
	Let us now estimate the numerator from above. Its derivative has the following form:
	\[
	\frac{d}{dt}(\dot x^2 y - x\dot y^2) = \dot x\dot y (\dot x-\dot y) + 2(\dot x y\ddot x - x\dot y \ddot y).
	\]
	
	Since~$\dot x=\int_0^t\ddot x(s)\,ds$, we have~$|\dot x|\le \|\ddot x\||t|$. Similar estimates hold for~$|\dot y|$ and~$|\dot x-\dot y|$. So the first difference in the previous expression has the following estimate:
	\[
	|\dot x\dot y (\dot x-\dot y)| \le 
	\|\ddot x\|\|\ddot y\| \|\ddot x-\ddot y\||t|^3.
	\]
	
	Now we want to estimate the second difference. Let us start with the difference~$\dot xy-x\dot y$. Since
	\[
	\left|\frac{d}{dt}(\dot x y-x\dot y)\right| =
	|\ddot x y-x\ddot y| \le |\ddot x||x-y| + |x||\ddot x-\ddot y| \le
	\|\ddot x\|\|\ddot x-\ddot y\| |t|^2 ,
	\]
	\noindent we obtain
	\[
	|\dot xy-x\dot y| \le 
	\frac13\|\ddot x\|\|\ddot x-\ddot y\||t|^3.
	\]
	
	\noindent and
	\[
	|\dot x y\ddot x - x\dot y \ddot y|\le
	|\dot x||y||\ddot x - \ddot y| + |\ddot y||\dot xy-x\dot y| \le
	\frac56 \|\ddot x\|\|\ddot y\| \|\ddot x-\ddot y\||t|^3.
	\]
	
	Thus,
	\[
	\left|\frac{d}{dt}(\dot x^2 y - x\dot y^2)\right| \le 
	|\dot x\dot y (\dot x-\dot y)| + 2|x\dot y\ddot x - \dot x y \ddot y| \le
	\frac{8}{3} \|\ddot x\|\|\ddot y\| \|\ddot x-\ddot y\||t|^3,
	\]
	\noindent and
	\[
	|\dot x^2 y - x\dot y^2| \le 
	\frac{2}{3} \|\ddot x\|\|\ddot y\| \|\ddot x-\ddot y\||t|^4.
	\]
	\noindent Gathering together the estimates for the numerator and the denominator, we obtain the inequality stated in the lemma.
\end{proof}

\begin{proof}[Proof of Theorem~\ref{thm:analytic_solution_extsts_and_unique}]
	Equation~\eqref{eq:analytic_with_singularity} holds for a.e.~$t$. Consequently, it is equivalent to the following integral representation:
	\begin{equation}
	\label{eq:integral_map_for_analytic_solution}
	x(t) = F(x)(t) \eqdef \dot x(0)t+ \int_0^t (t-s)
	\Big(\lambda \frac{\dot x^2(s)}{x(s)}+g(s,x(s),\dot x(s))\Big)\,ds.
	\end{equation}
	
	\noindent Obviously, we have~$\dot x(0)=0$ for any $C^2$-solution  of the equation~$x=F(x)$, since, in the opposite case, the integrand has a nonintegrable singularity of type~$1/s$ in a neighborhood of~$s=0$.
	
	We start with the existence of a solution. Let us search for a solution in the space~$X$ that consists of functions~$x(t)\in C^2[-\tau;\tau]$ such that~$x(0)=\dot x(0)=0$. Note that~$\|\ddot x\|=\sup_{|t|\le\tau}|\ddot x(t)|$ is a norm on~$X$. Let~$\varepsilon\in(0;1)$. We fix an arbitrary number~$\ddot x_0\ne 0$ and choose $\varepsilon$-ball in~$X$:
	\[
	B_\varepsilon^\tau=\{x\in X:\|\ddot x-\ddot x_0\|\le \varepsilon \ddot x_0\}
	\]
	
	Let us prove that~$F$ is a contraction mapping. Let~$x,y\in B_\varepsilon^\tau$. Then
	\[
	\left|
	\frac{d^2}{dt^2}F(x)(t)-\frac{d^2}{dt^2}F(y)(t)
	\right| \le
	|\lambda|\left|
	\frac{\dot x^2}{x} - \frac{\dot y^2}{y}
	\right| +
	\left|
	g(t,x,\dot x) - g(t,y,\dot y)
	\right|.
	\]
	
	The first term can be estimated by Lemma~\ref{lm:key_estimate}, since $|\ddot x(t)|\ge(1-\varepsilon)|\ddot x_0|\ne0$ and $|\ddot y(t)|\ge(1-\varepsilon)|\ddot x_0|\ne0$. For the second term, we have
	\[
	|g(t,x,\dot x)-g(t,y,\dot y)|\le 
	\|g_x\||x-y| + \|g_y\||\dot x-\dot y|\le
	\big(\frac12\|g_x\||t|^2 + \|g_{\dot x}\||t|\big) \|\ddot x-\ddot y\|.
	\]
	
	Thus, using~$|t|\le\tau$, we can write
	\begin{equation}
	\label{eq:tau_estimate_1}
	\left|
	\frac{d^2}{dt^2}F(x)-\frac{d^2}{dt^2}F(y)
	\right| \le
	\left(
	\frac{8}{3}|\lambda| \left(\frac{1+\varepsilon}{1-\varepsilon}\right)^2  +
	\frac12\|g_x\|\tau^2 + \|g_{\dot x}\|\tau
	\right)\|\ddot x-\ddot y\|.
	\end{equation}
	
	We have~$|\lambda|<3/8$ by assumption. So there exists a number~$0<\rho_0<1$ such that, for any small enough~$\varepsilon>0$ and $\tau>0$, the coefficient of $\|\ddot x-\ddot y\|$ (we denote it by~$\rho(\tau,\varepsilon)$ for short) is less than~$\rho_0$, $\rho(\tau,\varepsilon)\le\rho_0<1$.
	
	Thus, if~$\varepsilon$ and~$\tau$ are small enough, then~$F$ is a contraction mapping on~$B_\varepsilon^\tau$. The mapping~$F$ is contracting regardless of the choice of~$\ddot x_0\ne 0$. However, we still need to prove that it is possible to choose~$\varepsilon$ and $\tau$ so that the image of~$B_\varepsilon^\tau$ under~$F$ is contained in itself (in this case, the completeness of~$X$ guarantees the existence and uniqueness of a solution in~$B_\varepsilon^\tau$). Thus, we need to put
	\[
	\ddot x_0 = \frac{g(0,0,0)}{1-2\lambda}.
	\]
	
	So let us estimate the distance from~$F(x)$ to the center~$\tilde x=\frac12 \ddot x_0 t^2$ of the ball~$B_\varepsilon^\tau$. First, we compute the distance from the center to its image:
	\[
	\left|
	\frac{d^2}{dt^2}F(\tilde x) - \ddot x_0
	\right| \le
	\left|
	\lambda\frac{\dot{\tilde x}^2}{\tilde x} - \ddot x_0 +g(0,0,0)
	\right| +
	\|g_t\| |t| + \frac12\|g_x\||\ddot x_0||t|^2 + \|g_{\dot x}\||\ddot x_0||t|.
	\]
	\noindent The difference in the first modulus vanishes due to the choice of~$\ddot x_0$. Consequently,
	\[
	\left|
	\frac{d^2}{dt^2}F(\tilde x) - \ddot x_0
	\right| \le \tau (a\tau +b),
	\]
	\noindent where we put~$a=\|g_t\| + \|g_{\dot x}\||\ddot x_0|$ and $b=\frac12\|g_x\||\ddot x_0|$ for brevity.
	
	So, for any~$x\in B_\varepsilon^\tau$, we have
	\begin{multline}
	\label{eq:tau_estimate_2}
	\left|
	\frac{d^2}{dt^2}F(x) - \ddot x_0
	\right| \le
	\left|
	\frac{d^2}{dt^2}F(x) - \frac{d^2}{dt^2}F(\tilde x)
	\right| +
	\left|
	\frac{d^2}{dt^2}F(\tilde x) - \ddot x_0
	\right| \le\\
	\le \rho(\tau,\varepsilon) \|x-\ddot x_0\| + \tau(a\tau+b) \le
	\left(
	\rho(\tau,\varepsilon) + \frac{\tau(a\tau+b)}{\varepsilon |\ddot x_0|}
	\right)\varepsilon |\ddot x_0|.
	\end{multline}
	
	Since~$\rho(\tau,\varepsilon)\le\rho_0<1$ for any $\varepsilon$ and $\tau$ small enough, we are able to decrease~$\tau$ in such a way that the right-hand side becomes less than~$\varepsilon |\ddot x_0|$.
	
	So we prove that there exists a~$\tau_0>0$ and an~$\varepsilon_0>0$ such that equation~\eqref{eq:analytic_with_singularity} has a unique solution in~$B_{\varepsilon_0}^{\tau_0}$. We denote this solution by~$\hat x$.
	
	Let us now prove that a solution in the whole class~$C^2[-\tau_0;\tau_0]$ is unique and coincides with~$\hat x$. Let~$x$ be a solution of equation~\eqref{eq:analytic_with_singularity} in this class. Previously we have shown that~$\dot x(0)=0$. Since~$\dot x(t)\ne 0$ in a left (or right) punctured neighborhood of~$t=0$ by the assumptions of the theorem, we can use L'H\^opital's rule for equation~\eqref{eq:analytic_with_singularity}. So
	\[
	\ddot x(0) = 2\lambda \ddot x(0) + g(0,0,0)
	\quad\Rightarrow\quad
	\ddot x(0) = \frac{g(0,0,0)}{1-2\lambda} = \ddot x_0.
	\]
	
	Consequently, there exists a~$\tilde \tau\le \tau_0$ such that the inequality~$\|\ddot x-\ddot x_0\|\le\varepsilon_0|\ddot x_0|$ holds for~$|t|\le\tilde\tau$. Thus, the solutions~$x$ and~$\hat x$ must coincide for the interval~$|t|<\tilde\tau$. Moreover, they coincide outside the interval by the classical Picard theorem on the existence and uniqueness of solutions for ODE with smooth right-hand side.
	
	The last remaining thing is to prove the analyticity of~$\hat x$ under the assumption that the function~$g$ is analytic. Notice that if~$x(t)$ is an analytic function in a neighborhood of~$0$ and~$x(0)=\dot x(0)=0$, but~$\ddot x(0)\ne 0$, then the fraction~$\dot x^2/x$ is analytic. Consequently, the image~$F(x)$ is analytic function too. It is natural to use the Weierstrass theorem on the uniform limit of complex analytic functions. Note that the uniform limit of real-analytic functions on an interval can be a nonanalytic function. So we need to complexify the problem in a standard way.
	
	Let us now assume that~$t\in\mathbb{C}$, $x:\mathbb{C}\to\mathbb{C}$, and the function $g:\mathbb{C}^3\to \mathbb{C}$ denotes the complex-analytic extension of~$g$ on~$\mathbb{C}^3$.
	
	It is obvious that if a complex analytic function~$x(t)$ satisfies equation~\eqref{eq:analytic_with_singularity} for a.e.~$t$ in a neighborhood of~$0$, then it satisfies~\eqref{eq:analytic_with_singularity} for all $t\ne0$ in this neighborhood by continuity. So we are able to search for a solution in the integral form~\eqref{eq:integral_map_for_analytic_solution} again, where this integral should be taken over a segment joining~$0$ and~$t$ in the complex plane.
	
	If~$\ddot x(0)\ne 0$, then the image~$F(x)$ is analytic in the punctured disk~$0<|t|\le\tau$. If~$x(0)=\dot x(0)=0$, then~$F(x)$ is bounded in a neighborhood of~$0$. So, in this case, $F(x)$ is analytic on the whole disk~$|t|\le\tau$ by Riemann's theorem on removable singularity.
	
	All the estimates that we obtain for the real case remain the same if we consider the new space~$X$	of all complex analytic functions in the disk~$|t|\le\tau$ and the corresponding ball~$B_\varepsilon^\tau$ of all functions~$x(t)\in X$ such that~$x(0)=\dot x(0)=0$ and~$\|x-\ddot x_0\|=\sup_{|t|< \tau}|x(t)-\ddot x_0|\le \varepsilon\ddot x_0$. The mapping~$F$ becomes contracting on~$B_\varepsilon^\tau$ under the right choice of~$\varepsilon>0$ and $\tau>0$. Moreover, $F$ maps $B_\varepsilon^\tau$ to itself by the same reason. The space of all complex analytic functions is complete under the uniform norm, so there exists a unique fixed point in the ball~$B_\varepsilon^\tau$. The complex analytic solution found on the disk~$|t|\le \tau$ obviously becomes a real analytic solution if we restrict $t$ to the real line. It remains to say that a $C^2$ real solution is unique and must coincide with the analytic one.
\end{proof}

It is not hard to construct recurrence formulas for the Taylor coefficients of the solution of the Euler--Lagrange equation~\eqref{eq:main_Lagrange_eq}. They are based on the recurrence formula for the inverse of the series~$\sum_{k=0}^\infty a_k t^k$: if~$a_0\ne 0$, then, for the inverse series~$\sum_{k=0}^\infty b_k t^k$, we have
\[
b_0=\frac{1}{a_0}\quad\mbox{and}\quad b_k = -\frac{1}{a_0}\sum_{j=0}^{k-1}a_{k-j}b_j
\mbox{ for }k\ge 1.
\]
\noindent Let us write out a few first derivatives at~$p_0$ of the solution of equation~\eqref{eq:main_Lagrange_eq}:
\[
v(p_0)=p_0;\quad 
v'(p_0)=1; \quad 
v''(p_0)=\frac{3 p_0^2-1}{3 p_0(1+p_0^2)};
\]
\[
v'''(p_0)=\frac{3 p_0^4+2 p_0^2+1}{2 p_0^2(1+p_0^2)^2};\quad
v''''(p_0)=\frac{3 \left(459 p_0^6-108 p_0^4-618 p_0^2-52\right) p_0^2+241}{180 (3 p_0^2-1) p_0^3(1+p_0^2)^3}.
\]

Now we consider the case in which the right-hand side of equation~\eqref{eq:analytic_with_singularity} depends on a parameter $\alpha\in\R$:
\begin{equation}
\label{eq:analytic_with_singularity_and_parameter}
\ddot x = \lambda \frac{\dot x^2}{x} + g(t,x,\dot x,\alpha).
\end{equation}

Denote by~$x(t,\alpha)\in C^2$ the unique $C^2$ solution of this equation with the initial condition~$x(0,\alpha)=0$. Using Theorem~\ref{thm:analytic_solution_extsts_and_unique}, we obtain the following. If~$0<|\lambda|<3/8$, $g(0,0,0,\alpha)\ne 0$, and the functions~$g$, $g_x$, and~$g_{\dot x}$ are continuous, then there exists a unique solution on~$|t|\le\tau_\alpha$ for some positive time~$\tau_\alpha>0$, which depends on~$\alpha$ in general.

\begin{prop}
	\label{prop:uniform_limit}
	Suppose that~$0<|\lambda|<3/8$ and the function~$g$ is continuous together with its partial derivatives~$g_x$, $g_{\dot x}$ and~$g_\alpha$. Let us fix a value~$\alpha_0$ of the parameter~$\alpha$ and a number~$\delta>0$. If~$g(0,0,0,\alpha)\ne 0$ for~$|\alpha-\alpha_0|\le\delta$, then there exist a~$\tau>0$ and a~$C>0$ such that all the solutions~$x(t,\alpha)$ are defined for~$|t|\le\tau$, and~$|\ddot x(t,\alpha)-\ddot x(t,\beta)|\le C|\alpha-\beta|$ for all~$|t|\le\tau$. If, additionally, $g$ is analytic, then the solution~$x(t,\alpha)$ is analytic in~$t$ and~$\alpha$.
\end{prop}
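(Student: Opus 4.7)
The plan is to redo the Banach fixed-point argument from the proof of Theorem~\ref{thm:analytic_solution_extsts_and_unique} uniformly in the parameter~$\alpha$, and then read off the Lipschitz bound directly from the contraction estimate applied to the two fixed points $x(\cdot,\alpha)$ and $x(\cdot,\beta)$.

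First I would establish uniformity of the construction. Put $\ddot x_0(\alpha) \eqdef g(0,0,0,\alpha)/(1-2\lambda)$. By continuity of~$g$ and the hypothesis $g(0,0,0,\alpha)\ne 0$ on the compact set $\{|\alpha-\alpha_0|\le\delta\}$, there exist $c_0,C_0>0$ with $c_0\le|\ddot x_0(\alpha)|\le C_0$ throughout. On a fixed compact neighborhood of $(0,0,0)$ in $(t,x,\dot x)$-space the suprema of $|g_t|,|g_x|,|g_{\dot x}|,|g_\alpha|$ are, again by continuity, all bounded uniformly in $\alpha$. Inspecting the proof of Theorem~\ref{thm:analytic_solution_extsts_and_unique}, every quantity entering the contraction factor $\rho(\tau,\varepsilon)$ in~\eqref{eq:tau_estimate_1} and the self-mapping estimate~\eqref{eq:tau_estimate_2} admits such a uniform bound. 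Consequently $\tau>0$ and $\varepsilon>0$ can be chosen so that $F_\alpha$ (the analogue of~\eqref{eq:integral_map_for_analytic_solution} with $g$ replaced by $g(\cdot,\cdot,\cdot,\alpha)$) is a contraction with common ratio $\rho_0<1$ on a single ball $B_\varepsilon^\tau$ and maps $B_\varepsilon^\tau$ into itself, for all $\alpha$ with $|\alpha-\alpha_0|\le\delta$. This delivers the unique solution $x(\cdot,\alpha)\in B_\varepsilon^\tau$ on the common interval $|t|\le\tau$.

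For the Lipschitz bound, set $x=x(\cdot,\alpha)$, $y=x(\cdot,\beta)$, both in $B_\varepsilon^\tau$. The key observation is that the singular term $\lambda\dot y^2/y$ is $\alpha$-independent and therefore cancels in
\[
F_\alpha(y)(t)-F_\beta(y)(t)=\int_0^t(t-s)\bigl[g(s,y,\dot y,\alpha)-g(s,y,\dot y,\beta)\bigr]\,ds.
\]
Differentiating twice in~$t$ and using the mean value theorem in~$\alpha$ give
\[
\Bigl\|\tfrac{d^2}{dt^2}\bigl(F_\alpha(y)-F_\beta(y)\bigr)\Bigr\|\le \sup|g_\alpha|\cdot|\alpha-\beta|.
\]
Inserting $F_\alpha(y)$ between $F_\alpha(x)$ and $F_\beta(y)$ and using the contraction estimate $\|\tfrac{d^2}{dt^2}(F_\alpha(x)-F_\alpha(y))\|\le \rho_0\|\ddot x-\ddot y\|$ yields
\[
\|\ddot x-\ddot y\|\le\rho_0\|\ddot x-\ddot y\|+\sup|g_\alpha|\cdot|\alpha-\beta|,
\]
so $\|\ddot x-\ddot y\|\le C|\alpha-\beta|$ with $C=\sup|g_\alpha|/(1-\rho_0)$.

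For the analyticity claim I would complexify $\alpha$ exactly as in the last part of the proof of Theorem~\ref{thm:analytic_solution_extsts_and_unique}. Working in the Banach space of functions $x(t,\alpha)$ jointly holomorphic on a small polydisk $|t|\le\tau$, $|\alpha-\alpha_0|\le\delta'$, vanishing to second order at $t=0$, with the sup norm of $\partial_t^2 x$, the same estimates show that $F$ is a self-map contraction on the analogous ball. Its unique fixed point is jointly analytic in $(t,\alpha)$, and by the uniqueness part of Theorem~\ref{thm:analytic_solution_extsts_and_unique} at each real $\alpha$ it restricts on the real polydisk to the previously constructed solution, giving the analyticity. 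The main obstacle, and essentially the only nontrivial point, is arranging the common values of $\tau,\varepsilon,\rho_0$ that work uniformly in $\alpha$; once that is done, the Lipschitz bound appears cleanly precisely because the singular term $\lambda\dot x^2/x$ carries no $\alpha$-dependence and drops out of $F_\alpha(y)-F_\beta(y)$.
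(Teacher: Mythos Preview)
Your proposal is correct and follows essentially the same route as the paper: first choose $\tau,\varepsilon$ uniformly in $\alpha$ by compactness so that the contraction constant $\rho_0<1$ is common, then compare the two fixed points via the triangle inequality $F_\alpha(x)-F_\alpha(y)+F_\alpha(y)-F_\beta(y)$, using that the singular term carries no $\alpha$-dependence to bound the second piece by $\|g_\alpha\|\,|\alpha-\beta|$; the analyticity is again obtained by complexifying both $t$ and $\alpha$ and running the contraction in a space of jointly holomorphic functions. The paper organizes this through a single operator $\mathcal{F}$ acting on functions of $(t,\alpha)$, but the underlying estimate and constant $C=\|g_\alpha\|/(1-\rho_0)$ are identical to yours.
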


\begin{proof}
	First, we choose a common interval for all the solutions for~$|\alpha-\alpha_0|\le\delta$. In fact, we proved in Theorem~\ref{thm:analytic_solution_extsts_and_unique} that there exists a unique solution~$x(t,\alpha)$ on $|t|\le\tau$ for a given value of~$\alpha$ if there exists an~$\varepsilon>0$ such that the coefficients on the right-hand sides of estimates~\eqref{eq:tau_estimate_1} and~\eqref{eq:tau_estimate_2} are strictly less than~1. So we are able to choose the common~$\tau$ and~$\varepsilon$ for all~$\alpha$. Moreover, in this case, for any~$\alpha$, we have for~$|t|\le\tau$
	\[
	\left|\ddot x(t,\alpha)-\frac{g(0,0,0,\alpha)}{1-2\lambda}\right| \le
	\varepsilon\left|\frac{g(0,0,0,\alpha)}{1-2\lambda}\right|.
	\]
	\noindent Thus, we immediately see that the following estimate holds for all~$\alpha$:
	\[
	\frac{\|\ddot x(\cdot,\alpha)\|}{\inf_{|t|\le\tau}|\ddot x(t,\alpha)|}\le\frac{1+\varepsilon}{1-\varepsilon}.
	\]
	
	Now we define the following functional~$\mathcal{F}$ for~$x(t,\alpha)$:
	\[
	(\mathcal{F}x)(t,\alpha) \eqdef \int_0^t (t-s)
	\Big(\lambda \frac{\dot x^2(s,\alpha)}{x(s,\alpha)}+g(s,x(s,\alpha),\dot x(s,\alpha),\alpha)\Big)\,ds
	\]
	
	If~ $x=\mathcal{F}x$, then, by Lemma~\ref{lm:key_estimate}, we obtain
	\[
	|\ddot x(t,\alpha)-\ddot x(t,\beta)|\le 
	\left(
	\frac{8}{3}|\lambda| \left(\frac{1+\varepsilon}{1-\varepsilon}\right)^2  +
	\frac12\|g_x\|\tau^2 + \|g_{\dot x}\|\tau
	\right)\|\ddot x(\cdot,\alpha)-\ddot x(\cdot,\beta)\| + \|g_\alpha\||\alpha-\beta|.
	\]
	\noindent The coefficient of~$\|\ddot x(\cdot,\alpha)-\ddot x(\cdot,\beta)\|$ on the right-hand side was previously denoted by~$\rho(\tau,\varepsilon)$. The numbers~$\tau>0$ and~$\varepsilon>0$ were chosen in such a way that~$\rho(\tau,\varepsilon)<1$. Consequently,
	\[
	\|\ddot x(\cdot,\alpha)-\ddot x(\cdot,\beta)\| \le \frac{\|g_\alpha\|}{1-\rho(\tau,\varepsilon)}|\alpha-\beta|,
	\]
	\noindent which was needed to be proved.
	
	Now let us suppose additionally that the function~$g$ is analytic. Then, just as in the proof of Theorem~\ref{thm:analytic_solution_extsts_and_unique}, we complexify equation~\eqref{eq:analytic_with_singularity_and_parameter}. We use $\sup_{t,\alpha}|\ddot x(t,\alpha)|$ as a norm on the space of functions~$x(t,\alpha)$ (it is well defined, since~$x(0,\alpha)=\dot x(0,\alpha)=0$). Estimates of the operators~$F$ and~$\mathcal{F}$ coincide, and the space of analytic functions is complete under the uniform norm. So there exists a unique analytic in~$t$ and~$\alpha$ solution of the equation~$x=\mathcal{F}x$.
\end{proof}

So if~$\alpha\to\alpha_0$ and $g(0,0,0,\alpha_0)\ne 0$, then Proposition~\ref{prop:uniform_limit} guarantees that the limit~$x(\cdot,\alpha)\to x(\cdot,\alpha_0)$ is uniform in a neighborhood of~$0$. Note that, outside this neighborhood, the limit is uniform by the classical ODE theory for equations with smooth right-hand side.

\medskip

We also need the following variational equation for equations of type~\eqref{eq:analytic_with_singularity}:
\[
\ddot y = -\lambda \frac{\dot x^2}{x^2}y + 2\lambda \frac{\dot x}{x}\dot y + g_x(t,x,\dot x) y + g_{\dot x}(t,x,\dot x)\dot y.
\]
\noindent Obviously, if~$g\in C^2$, $x\in C^2$, $x(0)=\dot x(0)=0$, $\ddot x(0)\ne 0$ and~$x(t)\ne 0$ for~$t\ne 0$, then the variational equation has the following form:
\begin{equation}
\label{eq:type_of_Jacobi_eq}
\ddot y = 4\lambda\frac{t\dot y-y}{t^2} + \alpha(t)\frac{y}{t} + \beta(t)\dot y + \sigma(t),
\end{equation}
\noindent where~$\alpha,\beta\in C^1$ and~$\sigma\equiv 0$. We add a new function~$\sigma(t)$ to the right-hand side, since it will be very convenient for further study.

\begin{prop}
	\label{prop:var_eq_sol_exists}
	Suppose~$\alpha,\beta,\sigma\in C^1$ and~$\lambda<0$. Then, for any number~$\dot y_0\in\R$, there exists a unique $C^2$-solution~$y$ of the variational equation~\eqref{eq:type_of_Jacobi_eq} with initial conditions~$y(0)=0$ and~$\dot y(0)=\dot y_0$.
	
	Let us now fix~$t_0>0$ and put~$\|y\|=\sup_{|t|\le t_0}|y(t)|$. If~$-\frac12<\lambda<0$ and~$t_0<(1-2|\lambda|)/(\frac12\|\alpha\|+\|\beta\|)$, then  the following estimate holds for the solution~$y$:
	
	\[
	\|\ddot y\| \le 
	\frac{(\|\alpha\| + \|\beta\|)|\dot y_0| + \|\sigma\|}{1-2|\lambda|-(\frac12\|\alpha\|+\|\beta\|)t_0}.
	\]
\end{prop}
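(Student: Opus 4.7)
The plan is to convert the initial-value problem for \eqref{eq:type_of_Jacobi_eq} into a fixed-point equation $u=\mathcal{T}u$ for the second derivative $u=\ddot y$ in the Banach space $C[-t_0;t_0]$, and apply the contraction principle; the norm estimate will then come out of the standard one-step calculation $\|u\|\le\rho\|u\|+\|\mathcal{T}(0)\|$. The main thing that has to be understood is that the a priori singular term $4\lambda(t\dot y-y)/t^2$ is, in fact, a bounded integral operator in $u$ of norm exactly $2|\lambda|$.

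Starting from $y(0)=0$, $\dot y(0)=\dot y_0$ and $u=\ddot y$, I reconstruct
\[
\dot y(t)=\dot y_0+\int_0^t u(s)\,ds,\qquad y(t)=\dot y_0\,t+\int_0^t(t-s)u(s)\,ds.
\]
The key identity, obtained by integration by parts, is
\[
t\dot y(t)-y(t)=\int_0^t s\,u(s)\,ds,
\]
so $\frac{t\dot y-y}{t^2}=\frac{1}{t^2}\int_0^t s\,u(s)\,ds$ is continuous at $t=0$ with limit $\tfrac12 u(0)$, and similarly $\frac{y}{t}=\dot y_0+\frac{1}{t}\int_0^t(t-s)u(s)\,ds$ is continuous. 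Substituting these formulas into the right-hand side of \eqref{eq:type_of_Jacobi_eq} therefore yields a well-defined operator $\mathcal{T}:C[-t_0;t_0]\to C[-t_0;t_0]$ with $(\mathcal{T}u)(0)=2\lambda u(0)+(\alpha(0)+\beta(0))\dot y_0+\sigma(0)$. A $C^2$ function $y$ solves the initial-value problem if and only if $u=\ddot y$ is a fixed point of $\mathcal{T}$.

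Using the elementary bounds $|\int_0^t s\,u\,ds|\le\tfrac12 t^2\|u\|$, $|\int_0^t(t-s)u\,ds|\le\tfrac12 t^2\|u\|$, $|\int_0^t u\,ds|\le|t|\|u\|$, term-by-term estimation gives the Lipschitz bound
\[
\|\mathcal{T}u_1-\mathcal{T}u_2\|\le\rho\,\|u_1-u_2\|,
\qquad
\rho:=2|\lambda|+\Big(\tfrac12\|\alpha\|+\|\beta\|\Big)t_0.
\]
Under the assumption $-\tfrac12<\lambda<0$ and $t_0<(1-2|\lambda|)/(\tfrac12\|\alpha\|+\|\beta\|)$ we have $\rho<1$, so $\mathcal{T}$ is a strict contraction and Banach's theorem delivers a unique fixed point $u\in C[-t_0;t_0]$, giving existence and uniqueness of the $C^2$ solution. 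For the norm bound, take $u_2=0$: since $(\mathcal{T}0)(t)=(\alpha(t)+\beta(t))\dot y_0+\sigma(t)$, we have $\|\mathcal{T}0\|\le(\|\alpha\|+\|\beta\|)|\dot y_0|+\|\sigma\|$, and $u=\mathcal{T}u$ combined with the Lipschitz bound gives $\|u\|\le\rho\|u\|+\|\mathcal{T}0\|$, which rearranges to exactly the claimed estimate on $\|\ddot y\|$.

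The only real obstacle is conceptual: the factor $1/t^2$ makes $4\lambda(t\dot y-y)/t^2$ look dangerously singular, and one has to recognize that the identity $t\dot y-y=\int_0^t s\,u(s)\,ds$ reduces this to a bounded operator on $C[-t_0;t_0]$ of norm precisely $2|\lambda|$, independent of $t_0$. This is what forces the factor $1-2|\lambda|$ to appear in the denominator of the estimate and explains the natural threshold $\lambda>-\tfrac12$. For the first assertion in the full range $\lambda<0$ without the estimate, the same contraction argument still covers $|\lambda|<\tfrac12$; beyond that threshold one can introduce a suitably weighted norm on $C[-t_0;t_0]$, or invoke the Frobenius structure of the regular singular point $t=0$ of the linear equation \eqref{eq:type_of_Jacobi_eq}, whose indicial roots $1$ and $4\lambda$ cleanly separate the $C^2$ solution branch starting like $\dot y_0 t$ from the singular branch $\sim t^{4\lambda}$.
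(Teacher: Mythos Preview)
Your argument is correct and, for the estimate, is essentially the paper's own computation: both you and the paper use the identity $t\dot y-y=\int_0^t s\,\ddot y(s)\,ds$ together with $y=\dot y_0 t+\int_0^t(t-s)\ddot y\,ds$ and $\dot y=\dot y_0+\int_0^t\ddot y\,ds$, substitute into~\eqref{eq:type_of_Jacobi_eq}, and read off $\|\ddot y\|\le\rho\|\ddot y\|+(\|\alpha\|+\|\beta\|)|\dot y_0|+\|\sigma\|$ with $\rho=2|\lambda|+(\tfrac12\|\alpha\|+\|\beta\|)t_0$.

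Where you genuinely diverge is in the \emph{existence and uniqueness} step. The paper does not use a contraction mapping at all: it introduces $z=t\dot y-y$, writes the resulting first-order system for $(t,y,z)$, and multiplies the vector field by $t$ to obtain a smooth system with a hyperbolic fixed point at the origin (eigenvalues $1,1,4\lambda$). The $C^2$ solutions with $y(0)=0$ are then identified with trajectories on the two-dimensional unstable manifold, and a node-type lemma produces, for each tangent direction $(1,\dot y_0,0)$, a unique integral curve. This dynamical-systems argument works uniformly for every $\lambda<0$. Your contraction approach is cleaner and kills two birds with one stone (existence, uniqueness and the bound come out simultaneously), but as you note it only reaches $|\lambda|<\tfrac12$ directly; for $\lambda\le-\tfrac12$ you gesture at a weighted norm or at the Frobenius indicial roots $r=1,\,4\lambda$ without carrying either through. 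That is the one soft spot in your write-up relative to the stated hypothesis $\lambda<0$, though for the paper's application ($\lambda=-\tfrac14$) your range already suffices.
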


\begin{proof}
	Let us use a standard argument. We denote~$z=t\dot y-y$ and obtain the following ODE system:
	
	\[
	\begin{cases}
	\dot t = 1;\\
	\dot y = \frac1t (y+z);\\
	\dot z = 4\lambda \frac{z}{t} + (\alpha + \beta) y + \beta z + t\sigma.
	\end{cases}
	\]
	
	\noindent The solutions of this system are the integral curves of the vector field~$\xi(t,y,z)$ defined by the right-hand side in the space~$\R^3=\{(t,y,z)\}$. This vector field has a singularity for~$t=0$. So we consider a new field~$t\xi(t,y,z)$, which does not have singularities. The integral curves for~$\xi$ and~$t\xi$ coincide, but the speed of movement is different. Denote by~$s$ the time parameter along $t\xi$. So we have
	\[
	\begin{cases}
	t'_s = t;\\
	y'_s = y+z;\\
	z'_s = 4\lambda z + t(\alpha+\beta) y + t\beta z + t^2\sigma.
	\end{cases}
	\]
	\noindent Here the right-hand side is smooth. The origin~$(0,0,0)$ is a fixed point. The linearization at the origin has the following form:
	
	\[
	\begin{pmatrix}
	t'_s\\y'_s\\z'_s
	\end{pmatrix}
	=
	\begin{pmatrix}
	1 & 0 & 0\\
	0 & 1 & 1\\
	0 & 0 & 4\lambda
	\end{pmatrix}
	\begin{pmatrix}
	t\\y\\z
	\end{pmatrix}.
	\]
	\noindent The given matrix is diagonalizable and has eigenvalues~$4\lambda$ and~$1$ (of algebraic multiplicity~$2$). The corresponding eigenvectors are~$(0,a,(4\lambda-1) a)$ for~$4\lambda$ and~$(a,b,0)$ for~$1$. Since~$\lambda<0$, the system is hyperbolic at~$0$. It has a 2-dimensional unstable manifold~$\mathcal{M}$ (which is tangent to the plane~$\{z=0\}$ at~$0$) and 1-dimensional stable manifold (which is tangent to the line~$\{t=0,z=(4\lambda-1) y\}$ at~$0$). We are interested in the invariant surface~$\mathcal{M}$. Let us restrict the vector field~$t\xi$ to~$\mathcal{M}$. The origin is a fixed point on~$\mathcal{M}$, and the linearization yields the unit matrix. It is well known (see, for example, \cite[Lemma~5 on p.~97]{ZelikinOPU}) that, in this case, for any tangent vector~$\eta$ to~$\mathcal{M}$ at~$0$, there exists a unique integral curve of~$t\xi$ that is tangent to~$\eta$ at~$0$. So we take~$\eta=\pm(1,\dot y_0,0)$, and this gives us the existence and uniqueness for the solutions of the variational equation for small enough~$t$. For the other values of~$t$, we are able to use the classical ODE theory for linear equations.
	
	Let us now prove the estimate stated in the proposition. Since~$z=t\dot y-y$ vanish at~$t=0$, we have
	\[
	|t\dot y(t)-y(t)| = \Big|\int_0^t s\ddot y(s)\,ds\Big| \le \frac12 |t|^2\|\ddot y\|.
	\]
	\noindent For~$y$ and~$\dot y$, it follows that $y(t)=\dot y_0t+\int_0^t(t-s)\ddot y(s)\,ds$ and~$\dot y(t)=\dot y_0+\int_0^t\ddot y(s)\,ds$. So, using equation~\eqref{eq:type_of_Jacobi_eq}, we obtain
	\[
	\|\ddot y\| \le \big(2|\lambda| + \frac12\|\alpha\|t_0+\|\beta\|t_0\big)\|\ddot y\|+
	(\|\alpha\|+\|\beta\|)|\dot y_0| + \|\sigma\|,
	\]
	\noindent which immediately leads to the required estimate.
\end{proof}

\begin{corollary}
	\label{cor:est_Jacobi_difference}
	Let~$y$ and~$\tilde y$ be solutions of equation~\eqref{eq:type_of_Jacobi_eq} with $y(0)=\tilde y(0)=0$ and $\dot y(0)=\dot{\tilde y}(0)=\dot y_0$ having the same value of the parameter~$-\frac12<\lambda<0$, but different values of the parameters~$(\alpha,\beta,\sigma)$ and~$(\tilde\alpha,\tilde\beta,\tilde\sigma)$. Let~$t_0<\min\Big\{\frac{1-2|\lambda|}{\frac12\|\alpha\|+\|\beta\|},\frac{1-2|\lambda|}{\frac12\|\tilde\alpha\|+\|\tilde\beta\|}\Big\}$. Then\footnote{We can replace~$\|\dot{\tilde y}\|$ by~$\|\dot{\tilde y}\|\le t_0\|\ddot{\tilde y}\|$ into the inequality below, where the last term can be estimated by Proposition~\ref{prop:var_eq_sol_exists}.}
	\[
	\|\ddot y-\ddot{\tilde y}\|\le
	\frac{ (\|\alpha-\tilde\alpha\| + \|\beta-\tilde\beta\|)\|\dot{\tilde y}\| + \|\sigma-\tilde\sigma\|}
	{1-2|\lambda|-(\frac12\|\alpha\|+\|\beta\|)}.
	\]
\end{corollary}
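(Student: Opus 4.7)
The plan is to study the difference $w = y - \tilde y$ and to show that it solves another instance of equation~\eqref{eq:type_of_Jacobi_eq} with the \emph{original} coefficients $(\alpha,\beta)$ and a new forcing term built from the differences. Subtracting the two equations and using the identity $\alpha y - \tilde\alpha \tilde y = \alpha w + (\alpha - \tilde\alpha)\tilde y$ together with the analogous one for the $\beta$-term yields
\[
\ddot w = 4\lambda\,\frac{t\dot w - w}{t^2} + \alpha(t)\,\frac{w}{t} + \beta(t)\,\dot w + \hat\sigma(t),
\qquad w(0) = \dot w(0) = 0,
\]
where
\[
\hat\sigma(t) = \bigl(\alpha(t) - \tilde\alpha(t)\bigr)\,\frac{\tilde y(t)}{t} + \bigl(\beta(t) - \tilde\beta(t)\bigr)\,\dot{\tilde y}(t) + \bigl(\sigma(t) - \tilde\sigma(t)\bigr).
\]
The vanishing of $w(0)$ and $\dot w(0)$ is precisely the assumption that the two solutions share the initial data $y(0) = \tilde y(0) = 0$ and $\dot y(0) = \dot{\tilde y}(0) = \dot y_0$.

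The one delicate point is the regularity of $\tilde y/t$ at $t=0$, which I handle by noting that $\tilde y(0) = 0$ together with $\tilde y \in C^2$ gives the mean-value representation
\[
\frac{\tilde y(t)}{t} = \int_0^1 \dot{\tilde y}(st)\,ds.
\]
This shows simultaneously that $\tilde y/t$ extends to a $C^1$ function on the whole interval (so that $\hat\sigma \in C^1$, as required by the hypotheses of Proposition~\ref{prop:var_eq_sol_exists}) and that $|\tilde y(t)/t| \le \|\dot{\tilde y}\|$, whence
\[
\|\hat\sigma\| \le \bigl(\|\alpha - \tilde\alpha\| + \|\beta - \tilde\beta\|\bigr)\,\|\dot{\tilde y}\| + \|\sigma - \tilde\sigma\|.
\]

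Next I apply Proposition~\ref{prop:var_eq_sol_exists} to $w$, with coefficient triple $(\alpha,\beta,\hat\sigma)$, zero initial velocity, and the same value of $\lambda$; the bound on $t_0$ given by the first term in the minimum in the hypothesis of the corollary is exactly what the proposition requires. Since $\dot w(0) = 0$, the $|\dot y_0|$-term in the proposition's estimate drops out, so
\[
\|\ddot w\| \le \frac{\|\hat\sigma\|}{1 - 2|\lambda| - \bigl(\tfrac12\|\alpha\|+\|\beta\|\bigr)t_0},
\]
and substituting the bound on $\|\hat\sigma\|$ produces precisely the inequality stated in the corollary.

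I expect no real obstacle beyond bookkeeping: the argument is a straightforward linear-superposition computation combined with one invocation of the previous proposition. The only place that requires attention is the apparent singularity of the forcing term $\tilde y/t$ at the origin, and the mean-value representation above both bounds it by $\|\dot{\tilde y}\|$ and secures the $C^1$ regularity needed for Proposition~\ref{prop:var_eq_sol_exists} to apply.
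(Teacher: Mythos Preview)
Your proof is correct and follows essentially the same route as the paper: set $w=y-\tilde y$, observe that $w$ satisfies equation~\eqref{eq:type_of_Jacobi_eq} with coefficients $(\alpha,\beta)$ and forcing $\hat\sigma=(\alpha-\tilde\alpha)\tfrac{\tilde y}{t}+(\beta-\tilde\beta)\dot{\tilde y}+(\sigma-\tilde\sigma)$, then apply Proposition~\ref{prop:var_eq_sol_exists} with $\dot y_0=0$ and use $|\tilde y/t|\le\|\dot{\tilde y}\|$. Your mean-value representation for $\tilde y/t$ is a nice way to make the $C^1$ regularity and the bound explicit; the paper simply asserts both facts.
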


\begin{proof}
	Put~$w=y-\tilde y$. Then
	\[
	\ddot w = 4\lambda\frac{t\dot w-w}{t^2} + \alpha(t)\frac{w}{t} + \beta(t)\dot w + 
	(\alpha-\tilde\alpha)\frac{\tilde y}{t} + (\beta-\tilde\beta)\dot{\tilde y} + (\sigma-\tilde\sigma).
	\]
	\noindent Obviously $w\in C^2$, $w(0)=\dot w(0)=0$, $\frac{\tilde y}{t}\in C^1$ and~$\dot{\tilde y}\in C^1$. So. using Proposition~\ref{prop:var_eq_sol_exists}, we can write
	\[
	\|\ddot w\|\le
	\frac{\|(\alpha-\tilde\alpha)\frac{\tilde y}{t} + (\beta-\tilde\beta)\dot{\tilde y} + (\sigma-\tilde\sigma)\|}{1-2|\lambda|-(\frac12\|\alpha\|+\|\beta\|)}.
	\]
	\noindent Since $|\tilde y|\le|t|\|\dot{\tilde y}\|$, we have proved the stated estimate.
\end{proof}

\section{Construction of the field of extremals}
\label{sec:field_of_extremals}

\begin{figure}[ht]
	\centering
	\includegraphics[width=0.4\textwidth]{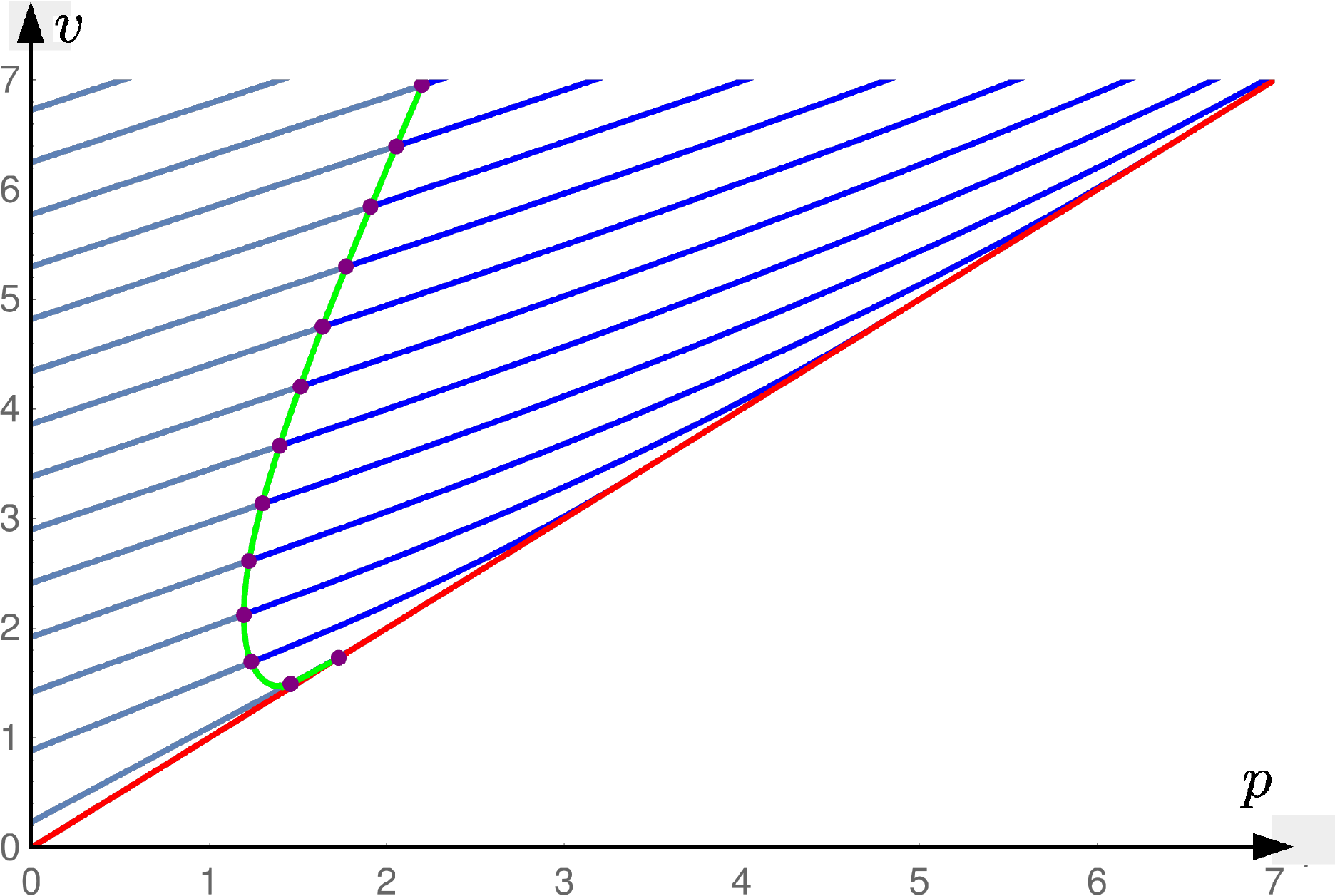}
	\caption{Field of extremals for Pontryagin's Maximum Principle~\eqref{eq:PMP} in the domain~$U=\{0\le p\le v\}$.}
	\label{fig:opt_syn_waxwell}
\end{figure}

In the present section, we shall construct a family of extremals in the domain~$U=\{0\le p\le v\}$. Namely, for any sufficiently large~$p_0$, we shall prove that there exists a $PC^2$-trajectory~$v(p;p_0)$ for $p\in[0;p_0]$ that satisfies all the conditions of Pontryagin's Maximum Principle~\eqref{eq:PMP} with $\lambda_0=1$. It has the following structure: there exists a point~$r(p_0)\in(0;p_0)$ such that the trajectory~$v$ is nonsingular on $[0;r(p_0)]$ (i.e., $v'' = 0$ for~$p\in[0;r(p_0)]$) and is singular on~$[r(p_0);p_0]$ (i.e., condition~\eqref{eq:main_Lagrange_eq} is fulfilled for~$p\in[r(p_0),p_0]$). If~$P_0$ is large enough, then the extremals~$v(p,p_0)$ do not intersect one another for different~$p_0\ge P_0$. Moreover, they one-to-one fill a subdomain in~$U=\{0\le p\le v\}$, which lies above the extremal~$v(p,P_0)$.

We shall prove that Jacobi's equation along any extremal constructed in this section has a unique solution with initial conditions~$\xi(p_0)=0$, $\xi'(p_0)=1$. Moreover, this solution has no conjugate points on half-open segment\footnote{This means that $\xi(p)\ne 0$ for $p\in[0;p_0)$.}~$[0;p_0)$.

The extremals constructed in this section have the following asymptotics:

\begin{enumerate}
	\item \label{list:asymptotics} The switch time moment~$r(p_0)$ is an analytic function on~$p_0$ and has the form~$r(p_0)=p_0(\hat r + O(p_0^{-2}))$ as~$p_0\to+\infty$, where
	\[\hat r=0.108984\pm10^{-6}.\]
	
	\item The height~$M(p_0)=v(0,p_0)$ is an analytic function on~$p_0$ and has the form~$M(p_0)=p_0(\hat M + O(p_0^{-2}))$ as~$p_0\to+\infty$, where
	\[\hat M=0.315736 \pm 10^{-6}.\]
	
	\item The derivative\footnote{This derivative determines the length of the segment that is the intersection of the optimal convex body with the bottom limiting hyperplane~$u=-M$.}~$v'(0,p_0)$ is an analytic function on~$p_0$ and has the form $v'(0,p_0) = v'_0 + O(p_0^{-2})$ as~$p_0\to+\infty$, where
	\[v'_0=0.530068\pm 10^{-6}.\]
	
	\item The value of the functional~\eqref{problem:main_maxwell} is an analytic function on~$p_0$ and has the form~$J(v(p,p_0))= p_0^{-2}(\hat J + O(p_0^{-2}))$ as~$p_0\to+\infty$, where
	\[\hat J=10.7344\pm 10^{-4}.\]
\end{enumerate}

\begin{figure}[ht]
	\centering
	\includegraphics[width=0.4\textwidth]{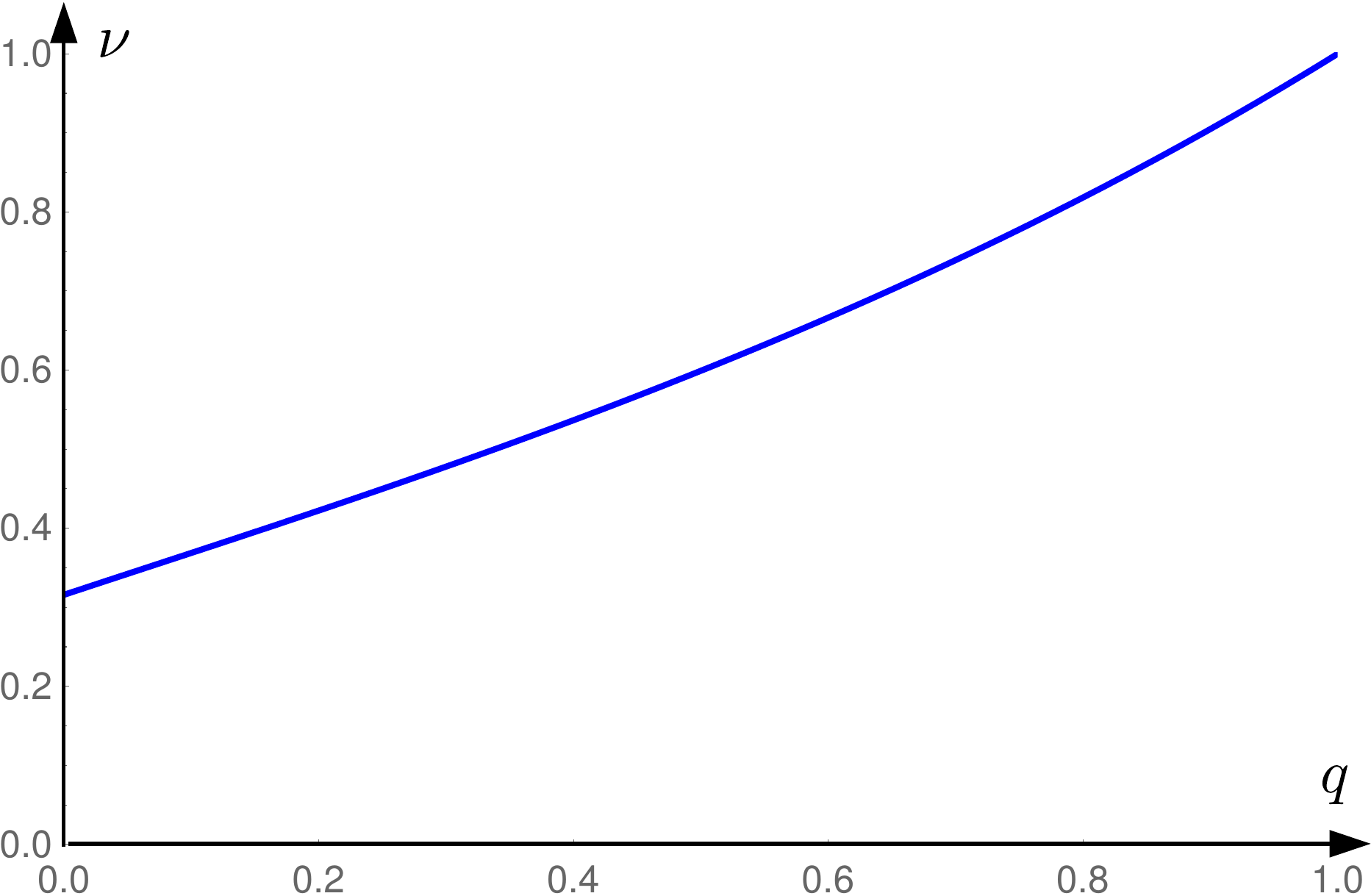}
	\ \ \ \ \ 
	\includegraphics[width=0.4\textwidth]{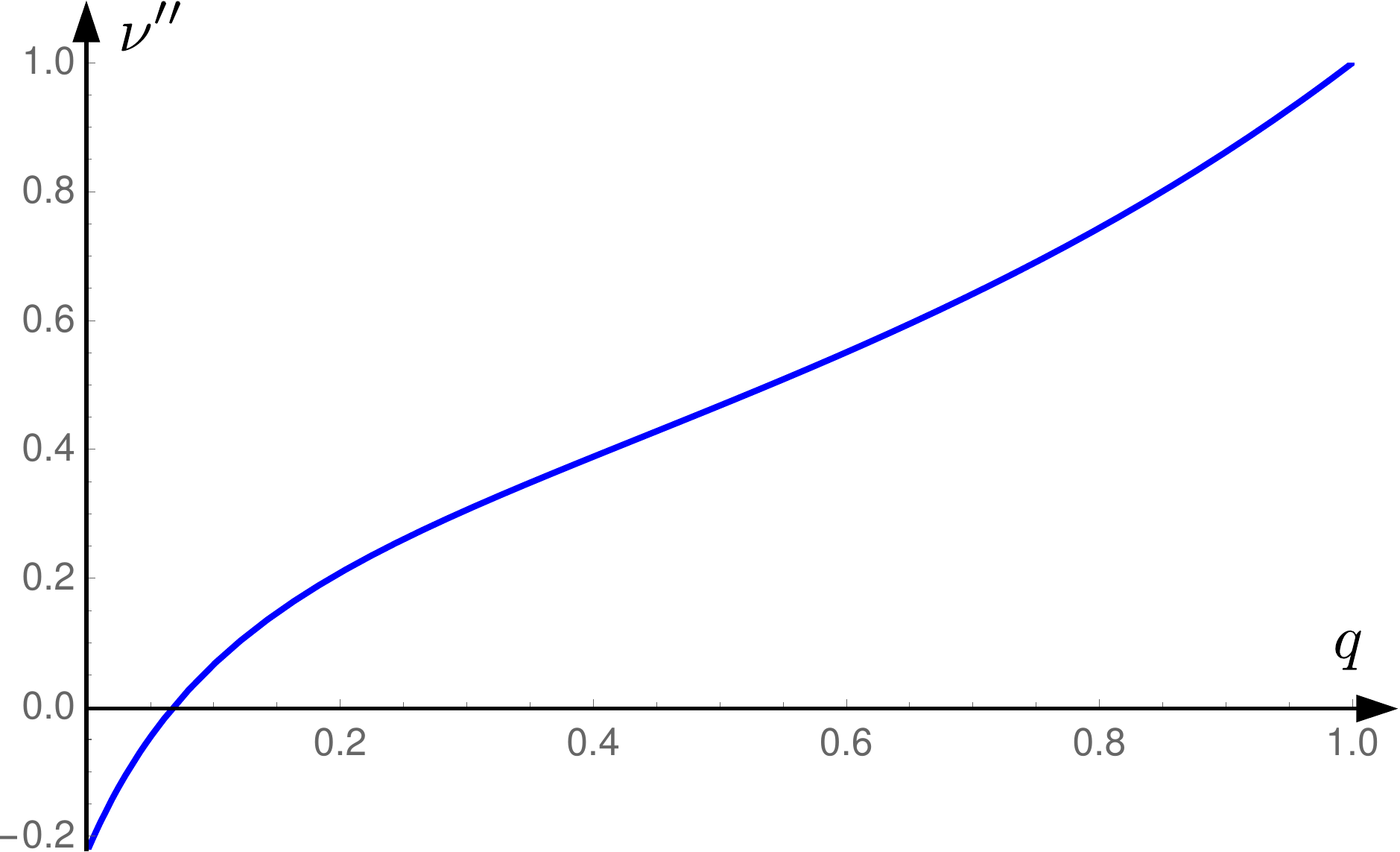}
	\caption{The limit solution of equation~\eqref{eq:nu} with $\alpha=0$ and $\nu(1)=\nu'(1)=1$ (on the left), and its second derivative (on the right).}
	\label{fig:nu_limit}
\end{figure}

We start with constructing of the described extremals. We need to consider the limit situation as~$p_0\to+\infty$. The construction of any trajectory starts from the right end. It should satisfy equation~\eqref{eq:main_Lagrange_eq} for~$p\in[r(p_0);p_0]$. Let us make the change of variables~$\nu(q) = \frac{1}{p_0}v(p_0q,p_0)$. Then equation~\eqref{eq:main_Lagrange_eq} takes the following form:

\begin{equation}
\label{eq:nu}
\nu''= -\frac14\frac{(\nu'-1)^2}{\nu-q} - \frac14\frac{(\nu'+1)^2}{\nu+q} + \frac{2\nu\nu'^2}{\nu^2+\alpha},
\end{equation}
\noindent where~$\alpha=1/p_0^2$ is a small parameter. The initial conditions~$v(p_0,p_0)=p_0$ and $v'(p_0,p_0)=1$ become~$\nu(1)=\nu'(1)=1$. Thus, for any~$\alpha\ne-1$, there exists a unique solution~$\nu(q,\alpha)$ by Theorem~\ref{thm:analytic_solution_extsts_and_unique} with $\nu'\ne 1$ in a punctured neighborhood of~$q=1$, and this solution is analytic. Sometimes we shall omit the dependence on~$\alpha$ and write~$\nu(q)$. In this case, the derivatives~$\nu'(q)$ and $\nu''(q)$ are taken wrt~$q$. The limit case arises when~$\alpha=0$ (the limit solution is depicted in Fig.~\ref{fig:nu_limit}).

Consider the limit case~$\alpha=0$. In this case, equation~\eqref{eq:nu} has an obvious group of symmetries: the uniform stretching in $q$ and $\nu$ does not change the equation\footnote{Equation~\eqref{eq:nu} can be considered as a Hamilton equation  with 1.5 degrees of freedom, since it comes from an Euler--Lagrange equation. But, unfortunately, this symmetry group does not preserve the corresponding symplectic form.}. Consequently, the order of this equation can be lowered (see~\cite{Ovsiannikov,IbragimovEng,IbragimovAzbuka,IbragimovOpyt}). This group has obvious invariants~$t=\nu/q$ and~$x=(q\nu'-\nu)/q$. Therefore, the fraction~$dx/dt=\nu''q/x-1$ is also an invariant. So we obtain the following equation by substituting these invariants into~\eqref{eq:nu}:
\begin{equation}
\label{eq:x_t_invariants}
\frac{dx}{dt} = 2+\frac32\left(\frac{x}{t} +\frac{t}{x}\right)-\frac{x}{2 t (t^2-1)}.
\end{equation}

This equation is called an Abel equation of the second kind. Certain equations of this type that can be integrated explicitly are known. To check this, the equation should be represented in the standard form as $y\,dy/ds-y=g(s)$ with a function $g$. Put $y=xt^{-2}\sqrt[4]{1-t^2}$. Then
\begin{equation}
\label{eq:y_t_invariants}
y\frac{dy}{dt} = 2\frac{\sqrt[4]{1-t^2}}{t^2}y+\frac{3\sqrt{1-t^2}}{2t^3}.
\end{equation}
\noindent The change of time 
\[
s=2\int \frac{\sqrt[4]{1-t^2}}{t^2}\,dt = 
- \,_2F_1\left(\frac{1}{2},\frac{3}{4};\frac{3}{2};t^2\right)t-
2\frac{\sqrt[4]{1-t^2}}{t},
\]
\noindent (where $_2F_1$ is the hypergeometric function) gives $t=t(s)$ as an implicit function and equation~\eqref{eq:x_t_invariants} takes the standard form
\begin{equation}
\label{eq:y_s_invariants}
y\frac{dy}{ds} -y = \frac{3\sqrt[4]{1-t^2}}{4t}.
\end{equation}

We are interested in the particular solution of this equation given by the initial data $\nu(1)=\nu'(1)=1$. Suppose that it is possible to find this solution explicitly in the form~$\Phi(s,y)=0$, which is equivalent to $\Psi(t,x)=0$ with the appropriate function~$\Psi$. We claim that, in this case, the first-order ODE~$\Psi(\frac{\nu}{q},\frac{q\nu'-\nu}{q})=0$ can be solved by quadratures. Indeed, it also respects the mentioned group of symmetries, and, using the coordinates $\tau=\ln q$ and $t=\frac vq$, we get an autonomous equation~$\Phi(t,\frac{dt}{d\tau})=0$ on~$t(\tau)$, which can be solved by quadratures. But, unfortunately, we were unable to explicitly solve equation~\eqref{eq:nu} even for the case~$\alpha=0$. It seems that equations~\eqref{eq:x_t_invariants}, \eqref{eq:y_t_invariants}, and~\eqref{eq:y_s_invariants} representing the case~$\alpha=0$ cannot be solved by quadratures (Julia's method~\cite{Julia} is inapplicable here, and the most famous book with lists of explicitly integrable equations~\cite{PolyaninZaitsev} does not contain these equations). So we are forced to use the numeric solution of equation~\eqref{eq:nu} for~$\alpha=0$. Let us remark that equation~\eqref{eq:nu} has a singularity at~$q=1$ (regardless of~$\alpha$). So all numeric computations need additional accuracy. But it is easy to work with this singularity. Indeed, the solution is analytic by Theorem~\ref{thm:analytic_solution_extsts_and_unique}, and it is easy to find its derivatives at~$q=1$:
\[
\nu(1)=1,\quad \nu'(1) = 1,\quad 
\nu''(1) = \frac{3-\alpha}{3(1+\alpha)},\quad 
\nu'''(1) = \frac{3+2\alpha+\alpha^2}{2(1+\alpha)^2},\quad\ldots
\]
\noindent Therefore, it is sufficient to move away from this singularity using a Taylor polynomial. We emphasize again that we need to use numerical methods for solving equation~\eqref{eq:nu} only at~$\alpha=0$ (but it is easy to solve it numerically for other values of~$\alpha$, which we do not need). So we denote the limit solution by ~$\hat\nu(q)=\nu(q,0)$.

We claim that the switch time~$r(p_0)$ is determined by conditions~\eqref{eq:psi_p_not_sing} for~$p_1=0$ and $p_2=r(p_0)$. Indeed, the condition~$\psi(r(p_0))=0$ is fulfilled automatically (since we are moving away from the singular part of the trajectory), and~$\psi(0)=0$ by the orthogonality conditions of Pontryagin's Maximum Principle. Since~$v''=0$ for $p\in(0;r(p_0))$, we have
\[
\ddt f_{v'} - f_v = f_{pv'} + f_{vv'}v' -f_v.
\]
\noindent Since $\psi(r(p_0))=\psi'(r(p_0))=0$, the condition~$\psi(0)=0$ has the following form:
\begin{equation}
\label{eq:I_def}
\psi(0)=\int_0^{r(p_0)} p (f_{pv'} + f_{vv'}v' - f_v)dp=0.
\end{equation}
\noindent Here we must substitute into the integral an affine function~$v$ on~$p$ of the form~$v'(r(p_0))(p-p_0) + v(r(p_0))$. Let us rewrite this equation in new coordinates. Denote~$\rho = r(p_0)/p_0$ and~$\eta(q)=\nu'(q)(q-\rho)+\nu(\rho)$. Then the equation~$\psi(0)=0$ has the following form (up to the multiplier~$-p_0^{-2}$):
\[
-p_0^{-2}\psi(0)=I(\rho,\alpha)\eqdef\int_0^\rho q(g_\eta - g_{q\eta'} - g_{\eta\eta'}\eta')\,dq=0,
\]
\noindent where
\[
g(q,\eta,\eta') = p_0^3 f(p_0q,p_0\eta,\eta') =
\frac{2\sqrt{\eta^2-q^2}}{(\eta^2+\alpha)^2}\eta'^2 -
\frac{q\eta'-\eta}{\eta(\eta^2+\alpha)\sqrt{\eta^2-q^2}}.
\]

Our goal is to prove the existence of a solution $\rho(\alpha)\in(0;1)$ for the equation $I(\rho,\alpha)=0$ at a given value of $\alpha$.

\begin{figure}[ht]
	\centering
	\includegraphics[width=0.4\textwidth]{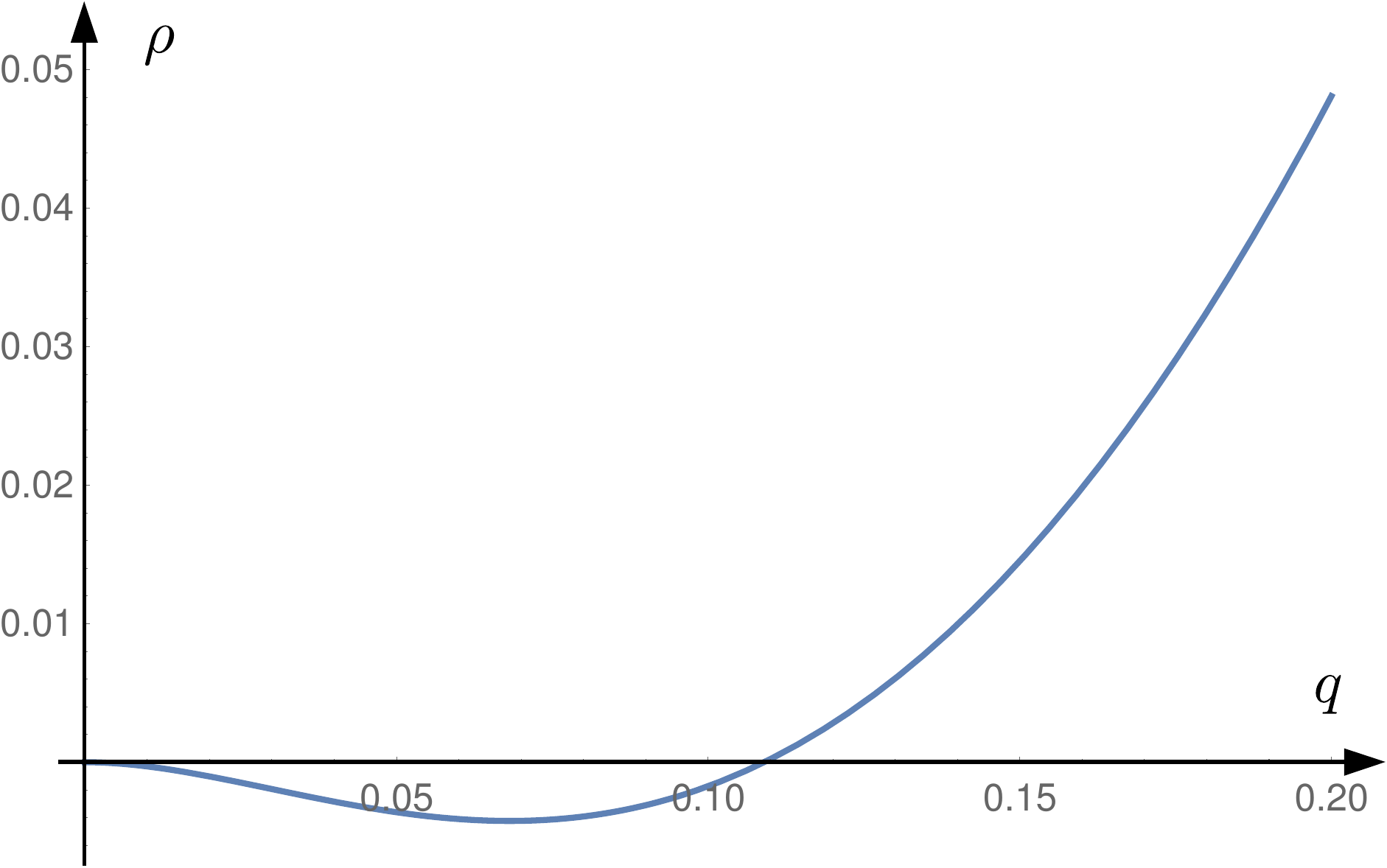}
	\caption{Graph of the function~$I(\rho,0)$.}
	\label{fig:I_null_rho}
\end{figure}

We start by studying the limit case~$\alpha=0$. For~$\alpha=0$, the integral~$I(\rho,0)$ can be explicitly expressed by the solution~$\hat\nu(\rho)$ of equation~\eqref{eq:nu} (see Fig.~\ref{fig:I_null_rho}):
\begin{multline*}
I(\rho,0) = \frac{1}{4(\rho\hat\nu'-\hat\nu)^2}
\Big[
3\hat\nu'\arcsin\frac{\rho}{\hat\nu} -
2-2\hat\nu'^2+\frac{1}{\hat\nu^4}\sqrt{\hat\nu^2-\rho^2}
\big((\hat\nu-\rho\hat\nu')^3 + \\
+(\hat\nu-\rho\hat\nu')^2\rho^2\hat\nu'^2 + \hat\nu^3(1+2\hat\nu'^2)\big)
\Big].
\end{multline*}
\noindent Therefore, the function~$I(0,\rho)$ tends to $+\infty$ as~$\rho\to1-0$. Put~$\rho=1-\Delta\rho$. Then
\[
I(1-\Delta\rho,0) = \Delta\rho^{-2}
\left(
-1 + \frac{3\pi}{8} + O(\Delta\rho)
\right).
\]

It is easy to see that $I(0,0)=I'_{\rho}(0,0)=0$ and~$I''_{\rho\rho}(0,0) = \frac{2(3\hat\nu'(0)^2-1)}{\hat\nu(0,0)^4}<0$, because
\[
\hat\nu(0)=0.3157595\pm 10^{-6}\ne 0
\quad\mbox{and}\quad
\hat\nu'(0)=0.5350553\pm10^{-6}<\frac{1}{\sqrt{3}}. 
\]

Thus, there exists a solution of the equation~$I(\rho,0)=0$. We are able to construct the function~$\hat\nu(\rho)$ only numerically, so the constructed solution of the equation~$I(\rho,0)=0$ can only be found  numerically:
\[
\hat r=\rho(0) = 0.108984\pm10^{-6}
\qquad
I'_{\rho}(0,\hat r) = 0.220371\pm10^{-6}\ne 0.
\]

Note that $\hat\nu''(q)>0$ for $q\ge 0.1$ (see Fig.~\ref{fig:nu_limit}). Consequently, the limit solution~$\hat\nu$ is strictly convex for $q\in[\hat r,1]$.	

Let us now investigate the behavior of~$I$ as a function of~$\rho$ for small values of the parameter~$\alpha>0$. The main tool here is Proposition~\ref{prop:uniform_limit}, which states that~$\nu(q,\alpha)\to\hat\nu(q)$ for all~$q\in[0;1]$ uniformly in~$\alpha\to+0$. Moreover, the complex-analytic extensions also uniformly converge. Thus, all the derivatives~$\nu^{(k)}(q,\alpha)$ also uniformly converge. It remains to note that~$I'_\rho(\hat r,0)\ne 0$. Therefore, the equation~$I(\rho,\alpha)=0$ has a solution~$\rho(\alpha)$ for all~$\alpha$ that are small enough (i.e., for all~$p_0$ large enough), and this solution has asymptotics~$\rho(\alpha) = \hat r + O(\alpha)$.

\begin{figure}[ht]
	\centering
	\includegraphics[width=0.4\textwidth]{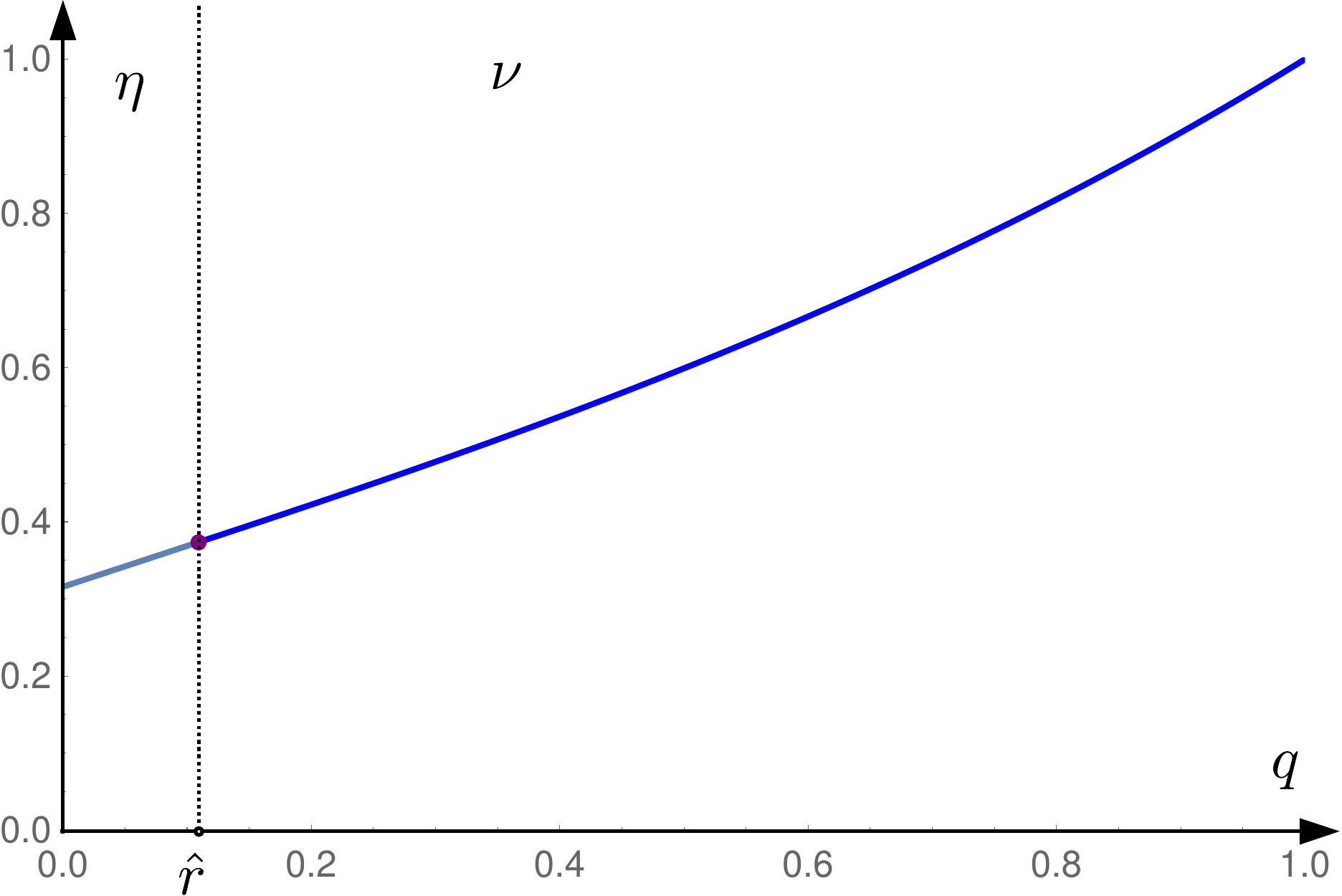}
	\caption{The limit extremal at $\alpha=0$.}
	\label{fig:nu_limit_with_line}
\end{figure}

So the limit trajectory (at~$\alpha=0$) has the following structure (see Fig.~\ref{fig:nu_limit_with_line}): (i) it coincides with the solution of equation~\eqref{eq:nu} for~$q\ge\hat r$, (ii) it is an affine function for~$q\le\hat\rho$, and (iii) it is continuous together with its first derivative at the switching point~$q=\hat\rho$.

Thereby, in the original coordinates~$(p,v)$, we see that, for any large enough~$p_0$, there exists a trajectory of the type described at the beginning of the section. This trajectory is strictly convex for~$p\in[r(p_0);p_0]$, where the switching point~$r(p_0)$ satisfies equation~\eqref{eq:psi_eq_int}. Some numerically computed trajectories for~$p_0>\sqrt3$ are depicted in Fig.~\ref{fig:opt_syn_waxwell} (see also Hypothesis~\ref{hyp1} below).

Let us now check the conditions of Pontryagin's Maximum Principle. First, we check that the control~$\theta=v''$ satisfies the maximum condition $\psi\theta\to\max_{\theta\ge 0}$. We know that the adjoint variable~$\psi$ is identically~$0$ on the singular part (and so any nonnegative control is allowed). Outside the singular part, we have $\theta\equiv0$. So we should check that $\psi\le 0$ on $[0;r(p_0)]$. The adjoint variable~$\psi$ can be found from the equation~$\psi''=\frac{d}{dp}f_{v'}-f_v$. Since~$v''\equiv0$ on $[0;r(p_0)]$ and~$\psi(r(p_0))=0$, we have
\[
\psi(\tilde p) = 
\int_{\tilde p}^{r(p_0)} (\tilde p-p)(f_v-f_{pv'} - f_{vv'}v')\,dp.
\]

\begin{figure}[ht]
	\centering
	\includegraphics[width=0.4\textwidth]{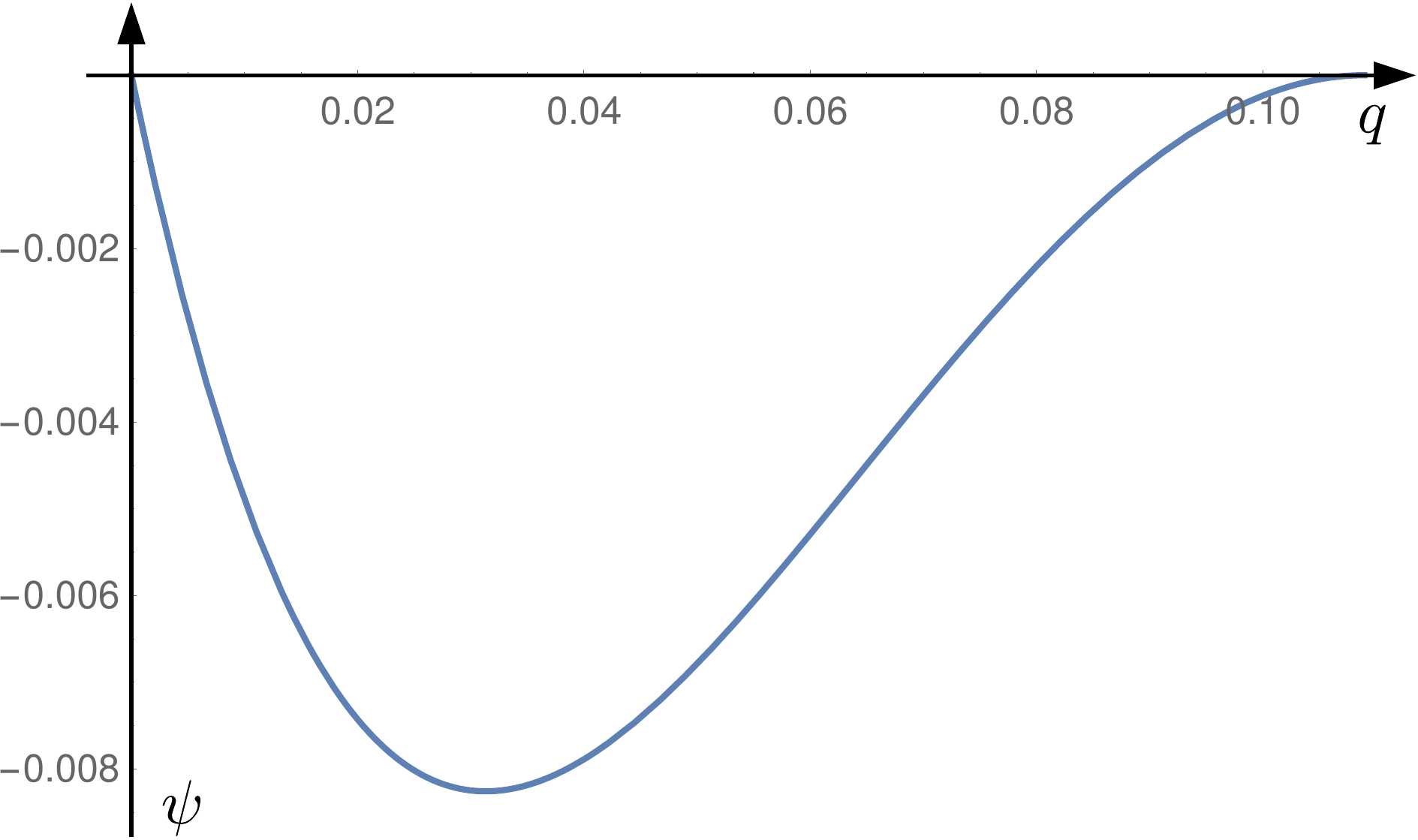}
	\caption{Graph of the function~$\omega(q)$ for $q\in[0;\hat r]$ at $\alpha=0$.}
	\label{fig:omega_limit}
\end{figure}

Now we pass to the limit~$p_0\to+\infty$. Put $p=p_0 q$ and $v=p_0\nu$ as usual. Then, for~$\tilde p=p_0\tilde q$ and~$\omega(q)=p_0^{-2}\psi(p_0q)$, we have
\[
\omega(\tilde q)=p_0^{-2}\psi(p_0\tilde q) = 
\int_{\tilde q}^{\rho(\alpha)} (\tilde q-q)(g_\nu-g_{q\nu'} - g_{\nu\nu'}\nu')\,dp.
\]
\noindent The limit function~$\omega(q)$ at $\alpha=0$ is depicted in Fig.~\ref{fig:omega_limit}. Obviously, $\omega(\rho(\alpha))=\omega'(\rho(\alpha))=0$ and $\omega''(\rho(\alpha))=-g_{\nu'\nu'}\nu''(\rho(\alpha))<0$. Also~$\omega(0)=0$ for small enough~$\alpha$, since the switching time~$\rho(\alpha)$ satisfies the condition~$I(\rho,\alpha)=0$. The function~$\omega(q)$ numerically found at~$\alpha=0$ is negative on~$(0;\hat r)$ and~$\omega'(0)<0$. Thus, the functions~$\omega(q)$ are negative on~$(0;\rho(\alpha))$ for small enough~$\alpha$ by Proposition~\ref{prop:uniform_limit}. Therefore, $\psi(0)=0$ and $\psi(p)<0$ for $p\in(0;r(p_0))$ for large enough~$p_0$. 

Second, we check the orthogonality conditions at the right end. Since the trajectory is singular on~$[r(p_0),p_0]$, we have $\psi\equiv0$, $\varphi\equiv f_{v'}$ and~$H=f_{v'}v'-f$. Thus, the condition $\psi(p_0)=0$ is fulfilled automatically. We know that~$\varphi\to\infty$ and~$H\to\infty$ as~$p\to p_0-0$. Let us check that~$\varphi-H\to0$ as~$p\to p_0-0$. We have
\[
H-\varphi = \frac{2\sqrt{v^2-p^2}}{(1+v^2)^2}v'(v'-2) -
\frac{\sqrt{v-p}}{v(1+v^2)\sqrt{v+p}}=O(p-p_0)\to0.	
\]
\noindent Third, since $\psi(0)=0$ by~$I(\rho,\alpha)=0$, the orthogonality condition at the left end is fulfilled. Consequently, the constructed trajectories satisfy Pontryagin's Maximum Principle and are extremals in problem~\eqref{problem:main_maxwell}.

Now we show that if~$P_0$ is large enough, then the extremals~$v(p,p_0)$ for~$p_0\ge P_0$ do not intersect each other. Let us denote by~$\kappa(q,\alpha)$ the trajectory made up of~$\eta(q,\alpha)$ and~$\nu(q,\alpha)$, i.e., $\kappa=\eta$ for $q\in[0;\rho(\alpha)]$ and $\kappa=\nu$ for $q\in[\rho(\alpha);1]$. Consider new coordinates $(q,\alpha)$ in the plane~$\R^2=\{(p,v)\}$: $p=q/\sqrt{\alpha}$ and $v=\kappa(q,\alpha)/\sqrt{\alpha}$. We claim that the Jacobian of this change of variables does not vanish (this will immediately prove that the trajectories $v(p,p_0)=p_0\kappa(p/p_0,p_0^{-2})$ do not intersect for different values of~$p_0$). We have
\[
\Delta=\det\begin{pmatrix}
p'_q & p'_\alpha \\
v'_q & v'_\alpha
\end{pmatrix}
=
\det\begin{pmatrix}
\alpha^{-\frac12}         & -\frac12\alpha^{-\frac32}q \\
\alpha^{-\frac12} \kappa' & -\frac12\alpha^{-\frac32}\kappa + \alpha^{-\frac12}\frac{\partial\kappa}{\partial\alpha}
\end{pmatrix}
=
\frac12\alpha^{-2}\left(
q\kappa' - \kappa + 2\alpha \frac{\partial\kappa}{\partial\alpha}
\right).
\]
\noindent The expression on the right-hand side in brackets is an analytic function of~$q$ and~$\alpha$ for~$q>\rho(\alpha)$. Since $\nu(1,\alpha)\equiv1$, $\nu'(1,\alpha)\equiv 1$ and $\nu''(1,\alpha)=(3-\alpha)/(3+3\alpha)$, the Taylor decomposition at $q=1$, $\alpha=0$ gives
\[
\Delta = \frac12\alpha^{-2} \Big( q-1 + O\big(\alpha^2+(1-q)^2\big)\Big).
\]
\noindent Thus, there exists a left neighborhood $q\in(1-\delta q,1)$, $\delta q>0$, where~$\Delta$ is negative for all small enough~$\alpha>0$. Since the function~$q\kappa'-\kappa$ is monotonic, $\Delta$ is also negative on the segment $q\in[0;1-\delta q]$ for all small enough~$\alpha>0$, q.e.d.

\begin{figure}[ht]
	\centering
	\includegraphics[width=0.4\textwidth]{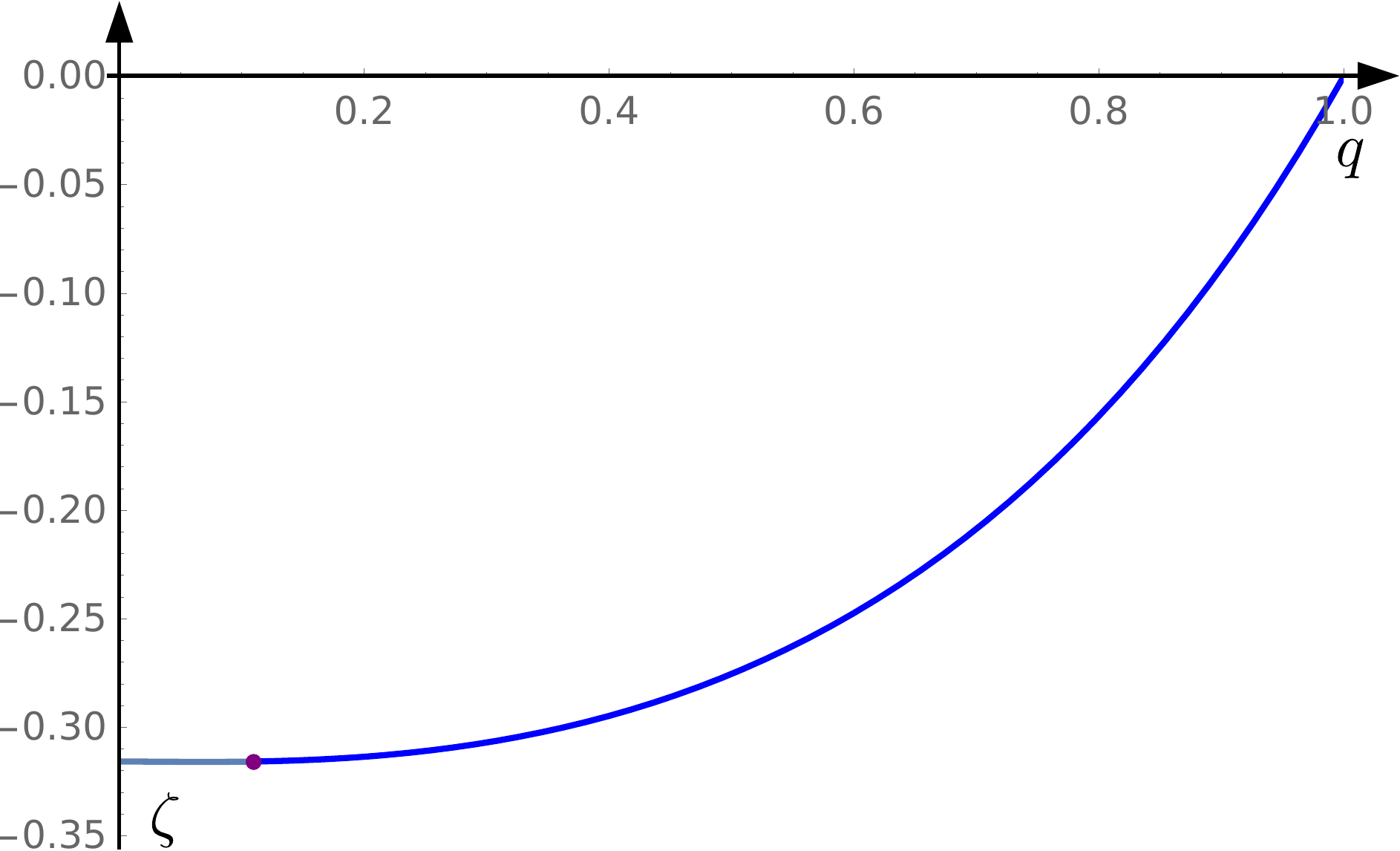}
	\caption{The solution~$\zeta$ of the variational equation~\eqref{eq:nu} at~$\alpha=0$.}
	\label{fig:Jacobi_limit}
\end{figure}

Let us now prove that, for all large enough~$p_0$, there exists a unique solution~$\xi(p,p_0)$ of the Jacobi equation~$\frac{d}{dp}(f_{v'v'}\xi'+f_{v'v}\xi)=f_{v'v}\xi'+f_{vv}\xi$ with initial conditions~$\xi(p_0,p_0)=0$ and~$\xi'(p_0,p_0)=1$, and this solution does not have conjugate points on~$[0;p_0)$. We emphasize the following important difference from the classic case: we substitute the constructed extremal~$v(p,p_0)$ into Jacobi equation, but this extremal satisfies the Euler--Lagrange equation~$\frac{d}{dp}f_{v'}-f_v=0$ only for~$p\in[r(p_0),p_0]$, but, for~$p\in[0;r(p_0)]$, it does not. So Proposition~\ref{prop:var_eq_sol_exists} guarantees that there exists a unique solution~$\xi(p,p_0)$. Let us show that this solution does not have conjugate points for all large enough~$p_0$. Jacobi's equation is the variational equation for the Euler--Lagrange equation~$\frac{d}{dp}f_{v'}-f_v=0$ despite the fact that we use the function $v$, which is not a solution of the Euler--Lagrange equation. Thus, the function~$\zeta(q,\alpha)=\frac{1}{p_0}\xi(p_0q)$ is a solution of the variational equation for~\eqref{eq:nu} with~$\nu(q,\alpha)=\frac{1}{p_0}v(p_0q,p_0)$. Its graph at~$\alpha=0$ is depicted in Fig.~\ref{fig:Jacobi_limit}. The function~$\zeta(q,\alpha)$ does not vanish on~$[0;p_0)$ for small enough~$\alpha$, since the function~$\zeta(q,0)$ does not vanish on the same interval (see Corollary~\ref{cor:est_Jacobi_difference}).

\medskip

It remains to verify the asymptotics stated at the beginning of the section. All of them can be easily obtained by passing to the limit~$\alpha\to+0$. We start with the first item on the list (see page~\pageref{list:asymptotics}): since~$\rho(\alpha) = \hat r + O(\alpha)$ is an analytic function of~$\alpha$ by Proposition~\ref{prop:uniform_limit}, we have
\[
r(p_0) = p_0 (\hat r + O(p_0^{-2})).
\]

\noindent The second item is obtained in a similar way: $\nu(0,\alpha) = \hat\nu(0) + O(\alpha)$. Thus, $M(p_0)=v(0,p_0) = p_0\nu(0,\alpha) = p_0 (\hat\nu(0) + O(p_0^{-2}))$ and
\[
\hat M = \hat\nu(0) = 0.315759 \pm 10^{-6}.
\]
\noindent Obviously, $\nu'(0,\alpha) = \hat\nu'(0) + O(p_0^{-2})$. Thus, $v'_0=\hat\nu'(0)$. For the value of the functional, we have
\[
J(v) = \alpha\left(\int_0^{\rho(\alpha)}g(q,\eta,\eta')\,dq + \int_{\rho(\alpha)}^1g(q,\nu,\nu')\,dq\right),
\]
\noindent where the value~$\hat J$ is found by a direct computation of the expression in brackets.

Therefore, we construct a family af extremals of Pontryagin's Maximum Principle for all large enough~$p_0$ (i.e., for~$p_0\ge P_0$) and find their asymptotics. We think that our construction can be strengthened as follows:

\begin{hyp}
	\label{hyp1}
	The trajectories described in the beginning of the present section exist, are uniquely defined, and satisfy Pontryagin's Maximum Principle~\eqref{eq:PMP} for all~$p_0>\sqrt{3}$.
\end{hyp}

This hypothesis is confirmed by the following proposition.

\begin{prop}
	The equation~$I(\rho,\alpha)=0$ has a solution $\rho(\alpha)\in(0;1)$ for all~$\alpha>\frac13$.
\end{prop}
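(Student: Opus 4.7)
The plan is to apply the intermediate value theorem to $\rho \mapsto I(\rho, \alpha)$ on $[0, 1)$. I would first establish the small-$\rho$ expansion of $I$. Since $\rho$ enters the integral $I(\rho, \alpha) = \int_0^\rho q[g_\eta - g_{q\eta'} - g_{\eta\eta'}\eta']\,dq$ only as the upper limit and through the affine profile $\eta(q) = \nu'(\rho)(q-\rho) + \nu(\rho)$, direct differentiation yields $I(0, \alpha) = 0$ and $I'_\rho(0, \alpha) = 0$. One more differentiation, combined with explicit evaluation of the required partials of $g$ at $q = 0$, gives
$$I''_{\rho\rho}(0, \alpha) = \frac{2\bigl[(3\nu_1^2 - 1)\nu_0^2 - (\nu_1^2 + 1)\alpha\bigr]}{(\nu_0^2 + \alpha)^3}, \qquad \nu_0 := \nu(0, \alpha),\ \nu_1 := \nu'(0, \alpha),$$
which recovers the $\alpha = 0$ formula stated in the paper.

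The central step is to show that the bracket is strictly negative for every $\alpha > 1/3$ in the admissible range where $I$ makes sense (in particular $\alpha < 3$, so that $\nu(q) > q$ on $[0, 1)$). By convexity of $\nu$ together with $\nu'(1,\alpha) = 1$ we have $\nu_1 \le 1$, and the symmetric-extension requirement from the Remark after Theorem~\ref{thm:J_of_v} gives $\nu_1 \ge 0$; integrating $\nu'(q) \ge \nu_1$ over $[0, 1]$ then yields $\nu_0 \le 1 - \nu_1$. If $\nu_1^2 \le 1/3$ the first bracket term is non-positive and negativity is immediate for any $\alpha > 0$; otherwise, maximizing $(3\nu_1^2 - 1)(1 - \nu_1)^2/(\nu_1^2 + 1)$ over $\nu_1 \in [1/\sqrt{3}, 1]$ shows the supremum is far smaller than $1/3$, so $\alpha > 1/3$ is more than enough. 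Hence $I(\rho, \alpha) < 0$ for all sufficiently small $\rho > 0$.

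Finally, the asymptotic $I(\rho, \alpha) \to +\infty$ as $\rho \to 1^-$ follows by the same computation as in the $\alpha = 0$ case: substituting the Taylor expansion $\nu(q) = 1 + (q - 1) + \tfrac12\nu''(1, \alpha)(q - 1)^2 + O((q - 1)^3)$ with $\nu''(1, \alpha) = (3-\alpha)/(3(1+\alpha))$ into the integrand, the singular factor $1/\sqrt{\eta^2 - q^2}$ produces a positive $(1-\rho)^{-2}$ divergence, exactly reproducing the paper's formula $I(1 - \Delta\rho, 0) \sim (3\pi/8 - 1)\Delta\rho^{-2}$ with an $\alpha$-dependent positive coefficient. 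The intermediate value theorem then delivers $\rho(\alpha) \in (0, 1)$ with $I(\rho(\alpha), \alpha) = 0$. The main obstacle is the careful bookkeeping needed for the explicit computation of $I''_{\rho\rho}(0, \alpha)$ together with securing the a priori sign $\nu_1 \ge 0$ for the specific trajectory $\nu(\cdot, \alpha)$, since a priori $\nu'(0, \alpha)$ could conceivably dip negative; this forces monitoring of the singular ODE~\eqref{eq:nu} globally on $[0, 1]$ and a mild care near the exceptional value $\alpha = 3$ where $g(0, 0, 0)$ vanishes and Theorem~\ref{thm:analytic_solution_extsts_and_unique} degenerates.
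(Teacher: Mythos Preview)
Your argument has a genuine gap at the right endpoint. The behaviour of $I(\rho,\alpha)$ as $\rho\to1^-$ is \emph{not} the same for $\alpha>0$ as for $\alpha=0$: the $(1-\rho)^{-2}$ blow-up you invoke is a purely $\alpha=0$ phenomenon. It comes from the factor $(\eta^2+\alpha)^{-2}$ in the integrand. When $\rho$ is near $1$ one has $\eta(q)\approx q$, so for $\alpha=0$ this factor is $\sim q^{-4}$ and the integrand develops an additional non-integrable singularity at $q=0$; for any $\alpha>0$ the factor stays bounded below by $\alpha^{-2}$ and no such divergence occurs. The paper's actual computation shows that for $\alpha\ne 0$ one has $I(1-\Delta\rho,\alpha)\sim c'(\alpha)\sqrt{\Delta\rho}\to 0$, and the sign of the coefficient $c'(\alpha)$ must be extracted from an explicit integral
\[
\int_0^1\frac{\sqrt{q}(1-2q)}{(q^2+\alpha)^2\sqrt{1-q}}\,dq,
\]
which is positive precisely for $\alpha<\tfrac13$ (this is where the threshold $p_0>\sqrt3$ enters). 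Your ``same computation as in the $\alpha=0$ case'' misses this entirely and would, if carried through, give the wrong order and the wrong sign information.

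There is a second gap in your treatment of the left endpoint. You bound $\nu_0$ and $\nu_1$ by appealing to convexity of $\nu(\cdot,\alpha)$ on $[0,1]$, but $\nu$ here is the solution of the singular Euler--Lagrange ODE~\eqref{eq:nu}, not the concatenated extremal, and it is \emph{not} convex on all of $[0,1]$: the paper's own numerics (Fig.~\ref{fig:nu_limit}) show $\hat\nu''(q)<0$ for small $q$. So the inequalities $\nu_1\le 1$ and $\nu_0\le 1-\nu_1$ are unjustified, and with them your analytical argument for $I''_{\rho\rho}(0,\alpha)<0$ collapses. The paper does not attempt such a bound; it simply verifies the sign of $I''_{\rho\rho}(0,\alpha)$ numerically for $0\le\alpha\le 1$ and puts its analytic effort into the delicate $\rho\to1^-$ expansion described above.
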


\begin{proof}	
	Let us investigate the behavior of~$I$ as a function of~$\rho$ in neighborhoods of~$\rho=0$ and~$\rho=1$. The integral~$I$ from~\eqref{eq:I_def} has the form~$\int_0^\rho q G_\alpha(q,\rho)\,dq$, where $G_\alpha$ is an analytic function in a neighborhood of~$\rho=q=0$. Thus, $I(0,\alpha)=0$, $I'_{\rho}(0,\alpha)=0$, and~$I''_{\rho\rho}(0,\alpha)=G_\alpha(0,0)$, i.e.,
	\[
	I''_{\rho\rho}(0,\alpha) = 
	\frac{4|\nu(0,\alpha)|}{(\nu^2(0,\alpha)+\alpha)^2}
	\left[
	-\frac{\nu'^2(0,\alpha)+1}{2\nu(0,\alpha)} +
	\frac{2\nu(0,\alpha)\nu'^2(0,\alpha)}{\nu^2(0,\alpha)+\alpha}
	\right].
	\]
	\noindent This expression is negative for all small enough~$\alpha$ (since it is negative for~$\alpha=0$). Numerical computations show that this expression is negative for all~$0\le\alpha\le1$. Therefore, $I(\alpha,\rho)\sim -c\rho^2$, $c>0$ as $\rho\to +0$. 
	
	The asymptotic behavior of~$I(\rho,\alpha)$ as~$\rho\to1-0$ is more complicated. We are able to prove that~$I(\rho,\alpha)\sim c'\sqrt{1-\rho}$ as $\rho\to1-0$, and~$c'>0$ for~$p_0>\sqrt{3}$. Let us use the following relation, which holds for an arbitrary function~$\eta(p)$:
	\[
	g_\eta-g_{q\eta'}-\eta'g_{\eta\eta'} = 
	\frac{g_{\eta'\eta'}}2\frac{q\eta'-\eta}{\eta^2-q^2} - \frac{\eta'}2\frac{dg_{\eta'\eta'}}{dq}.
	\]
	\noindent Now we take the integral in~$I$ by parts. Since $\eta$ is an affine function, $\eta'=\nu'(\rho)$ and $q\eta'-\eta=\rho\nu'(\rho)-\nu(\rho)$ do not depend on~$q$. Thus,
	\begin{multline}
	\label{eq:I_main_formula}
	\frac12 I(\alpha,\rho) =
	-\nu'(\rho)q\frac{\sqrt{\eta^2-q^2}}{(\eta^2+\alpha)^2}\Big|_0^\rho +
	\nu'(\rho)\int_0^{\rho}
	\frac{\sqrt{\eta^2-q^2}}{(\eta^2+\alpha)^2}\,dq+\\
	+ (\rho\nu'(\rho)-\nu(\rho)) \int_0^{\rho}
	\frac{q\,dq}{(\eta^2+\alpha)^2\sqrt{\eta^2-q^2}}.
	\end{multline}
	
	Let us estimate the asymptotic for each of the three terms in~\eqref{eq:I_main_formula} as~$\rho\to 1-0$ and $\alpha\ne 0$. We use the convexity of~$\nu$ in a neighborhood of~$\rho=1$, which we are able to guarantee for~$0<\alpha<3$. We fix a value of~$\alpha\in(0;3)$. Let $\rho=1-\Delta\rho$, $\Delta\rho>0$. Denote $\ddot\nu_0=\nu''(0)=(3-\alpha)/(3+3\alpha)$ for short. Then
	\[
	\nu(1-\Delta\rho) = 1-\Delta\rho + \frac12\ddot\nu_0\Delta\rho^2 + O(\Delta\rho^3);
	\quad
	\nu'(1-\Delta\rho) = 1 - \ddot\nu_0\Delta\rho + O(\Delta\rho^2).
	\]
	\noindent Using these formulas, we obtain
	\[
	\eta^2(q) - q^2 = 2\ddot\nu_0\Delta\rho q(1-q) + O(\Delta\rho^2);
	\quad
	\eta^2(q) + \alpha = q^2 + \alpha + O(\Delta\rho).
	\]
	\noindent Here the terms~$O(\Delta\rho^k)$ depend on~$q$, but the fraction~$O(\Delta\rho^k)/\Delta\rho^k$ is bounded as~$\Delta\rho\to +0$ uniformly on~$q\in[0;1]$. All terms of the form~$O(\Delta\rho^k)$ will have the same property in what follows.
	
	Let us substitute all the obtained decompositions into terminant in~\eqref{eq:I_main_formula}. We have
	\[
	I_0=-\nu'(\rho)q\frac{\sqrt{\eta^2-q^2}}{(\eta^2+\alpha)^2}\Big|_0^\rho =
	-\frac{\sqrt{\ddot\nu_0}}{(1+\alpha)^2}\Delta\rho +
	O(\Delta\rho^2).
	\]
	
	For the first integral in~\eqref{eq:I_main_formula}, we can write
	\begin{multline*}
	I_1=\int_0^{1-\Delta\rho}\frac{\sqrt{\eta^2-q^2}}{(\eta^2+\alpha)^2} = 
	\sqrt{2\ddot\nu_0}\sqrt{\Delta\rho}
	\int_0^{1-\Delta\rho}\frac{\sqrt{q(1-q)+O(\Delta\rho)}}{(q^2+\alpha^2+O(\Delta\rho))^2}\,dq=\\
	=\sqrt{2\ddot\nu_0}\sqrt{\Delta\rho}
	\left[
	\int_0^1\frac{\sqrt{q(1-q)}}{(q^2+\alpha^2)^2}\,dq +
	O(\Delta\rho)
	\right].
	\end{multline*}
	
	Similarly, for the second integral in~\eqref{eq:I_main_formula}, we obtain
	\[
	I_2 = \int_0^{\rho}\frac{q\,dq}{(\eta^2+\alpha)^2\sqrt{\eta^2-q^2}} = 
	\frac{1}{\sqrt{2\ddot\nu_0}\sqrt{\Delta\rho}}
	\left[
	\int_0^1\frac{\sqrt{q}}{(q^2+\alpha)^2\sqrt{1-q}}\,dq +
	O(\Delta\rho)
	\right].
	\]
	\noindent So it remains to compute the coefficient of~$I_2$ in $I$:
	\[
	\rho\eta'(\rho)-\eta(\rho) = -\ddot\nu_0 \Delta\rho + O(\Delta\rho^2).
	\]
	\noindent Therefore,
	\[
	I(1-\Delta\rho,\alpha) = 
	\sqrt{\Delta\rho}
	\frac{\sqrt{\ddot\nu_0}}{\sqrt{2}}
	\left(
	\int_0^{1}
	\frac{\sqrt{q}(1-2q)}{(q^2+\alpha)^2\sqrt{1-q}}\,dq +
	O(\Delta\rho^{\frac32})
	\right) -
	\frac{\sqrt{\ddot\nu_0}}{(1+\alpha)^2}\Delta\rho +
	O(\Delta\rho^2).
	\]
	
	The obtained integral can easily be computed explicitly:
	\[
	\int_0^{1}
	\frac{\sqrt{q}(1-2q)}{(q^2+\alpha)^2\sqrt{1-q}}\,dq
	=\frac{\pi  \sqrt{2}}{8}
	\frac{\left(1-\alpha -\sqrt{\alpha} \sqrt{1+\alpha }\right)}
	{\alpha^{\frac54} (1+\alpha)^{\frac32}\sqrt{\sqrt{\alpha}+\sqrt{1+\alpha}}}.
	\]
	\noindent It is positive for~$\alpha<\frac{1}{3}$, i.e., for~$p_0>\sqrt{3}$. So~$I(\rho,\alpha)\sim c'\sqrt{1-\rho}$ as~$\rho\to1-0$, where~$c'>0$ for~$\alpha<\frac13$, and~$I(\rho,\alpha)\sim -c\rho^2$ as $\rho\to+0$, $c>0$ for~$\alpha<1$. Thus, $I(\rho,\alpha)$ is positive in a left neighborhood of~$\rho=1$, and it is negative in a right neighborhood of~$\rho=0$. So the equation $I(\rho,\alpha)=0$ must have a solution $\rho(\alpha)\in(0;1)$.
\end{proof}

Our hypothesis is that this solution is unique and comes from the solution~$\hat r$ of the limit equation~$I(\rho,0)=0$ at~$\alpha=0$.

\section{Proof of the optimality}
\label{sec:second_var}

We shall prove in this section that any extremal from the field constructed in Sec.~\ref{sec:field_of_extremals} is a local minimum of the functional~$J$ in the class~$PC^2$ for the corresponding height~$M=v(0)$.

\begin{thm}
	\label{thm:second_var}
	Let~$\hat v\in PC^2[0;p_0]$ be an admissible trajectory in problem~\eqref{problem:main_maxwell} on the interval~$[0;p_0]$ from the field of extremals constructed in Sec.~\ref{sec:field_of_extremals}. In this case, there exists an~$\varepsilon>0$ such that if~$h\in PC^2[0;p_0]$, $h(0)=h(p_0)=h'(p_0)=0$, $\|h''\|\le\varepsilon$, and the function~$v+h$ is convex, then~$J(\hat v+h)\ge J(\hat v)$.
\end{thm}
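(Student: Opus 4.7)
The plan is to expand $J(\hat v+h)-J(\hat v)$ by Taylor's formula as $\Delta_1+\Delta_2+R$, where $\Delta_1$ is the first variation, $\Delta_2$ the second variation, and $R$ the remainder. The strategy is to show $\Delta_1\ge 0$ using the Pontryagin adjoint $\psi$ together with the convexity constraint, $\Delta_2\ge 0$ via the classical Legendre--Jacobi--Riccati construction based on the field Jacobi solution $\xi(\cdot,p_0)$ from Sec.~\ref{sec:field_of_extremals}, and that $R$ is absorbed once $\|h''\|$ is taken small enough.

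For the first variation I would substitute $f_v=\varphi'$ and $f_{v'}=\varphi+\psi'$ from the PMP adjoint equations and integrate by parts twice. The boundary term $[\varphi h]_0^{p_0}$ vanishes by $h(0)=h(p_0)=0$, and a second integration by parts together with $\psi(0)=\psi(p_0)=0$ and $h'(p_0)=0$ yields
\[
\Delta_1=-\int_0^{p_0}\psi(p)\,h''(p)\,dp.
\]
On the singular arc $[r(p_0),p_0]$ one has $\psi\equiv 0$. On the nonsingular arc $[0,r(p_0)]$ the reference trajectory $\hat v$ is affine (so $\hat v''\equiv 0$), whence convexity of $\hat v+h$ forces $h''\ge 0$ in the sense of measures, while PMP gives $\psi\le 0$. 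Hence $-\psi h''\ge 0$ a.e., and $\Delta_1\ge 0$.

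For the second variation I would take the field Jacobi solution $\xi(\cdot,p_0)$, nonvanishing on $[0,p_0)$ with $\xi(p_0)=0$, $\xi'(p_0)=1$, and set $W=-f_{vv'}-f_{v'v'}\xi'/\xi$ along $\hat v$. A direct computation using the Jacobi equation shows that $W$ satisfies the Riccati equation $W'=(f_{vv'}+W)^2/f_{v'v'}-f_{vv}$ on $[0,p_0)$. Adding the total derivative $(Wh^2)'=W'h^2+2Whh'$, whose integral $[Wh^2]_0^{p_0}$ vanishes thanks to $h(0)=h(p_0)=0$, transforms $\Delta_2$ into the perfect-square form
\[
\Delta_2=\frac12\int_0^{p_0}f_{v'v'}\Bigl(h'+\frac{f_{vv'}+W}{f_{v'v'}}h\Bigr)^2\,dp\ge 0,
\]
since $f_{v'v'}>0$ throughout $\Int U$ by Sec.~6.

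The main obstacle will be the singular endpoint $p_0$: the integrand $f$ blows up like $1/\sqrt{v^2-p^2}$, $f_{v'v'}$ vanishes like $(p_0-p)$ along $\hat v$, and $\xi'/\xi$ diverges like $1/(p_0-p)$. I would verify that these orders of vanishing conspire so that $W$ extends continuously to $p_0$ (so the boundary term is truly harmless), and then use the hypotheses $h(p_0)=h'(p_0)=0$, which force $h=O((p_0-p)^2)$ and $h'=O(p_0-p)$, to show that the Taylor remainder $R$ is dominated by $\|h''\|\cdot(\Delta_1+\Delta_2)$ uniformly for $\|h''\|\le\varepsilon$. The matching at the switching point $r(p_0)$, where $\hat v''$ jumps, causes no trouble because $h\in PC^2$ and the integrals split cleanly there; the delicate part is the asymptotic bookkeeping at $p_0$.
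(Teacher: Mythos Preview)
Your overall architecture matches the paper's: Taylor expansion, $\Delta_1\ge 0$ via the adjoint $\psi$ and convexity, $\Delta_2$ via a Riccati construction built from the Jacobi solution $\xi$, and then absorption of the remainder. The first-variation argument is essentially correct (one caveat: at $p_0$ the adjoint $\varphi$ blows up like $(p_0-p)^{-1}$, so $[\varphi h]_0^{p_0}=0$ needs $h'(p_0)=0$, not just $h(p_0)=0$).

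The genuine gap is in the second- and third-variation steps. Your perfect-square identity only yields $\Delta_2\ge 0$, and that is not enough to absorb the remainder. If $h$ is nearly proportional to $\xi$ on most of the interval, the integrand $f_{v'v'}(h'-\tfrac{\xi'}{\xi}h)^2$ can be tiny while $h$ itself is of order one in any reasonable norm, so an inequality of the type $|R|\le c\|h''\|\,\Delta_2$ simply fails. The paper resolves this by proving a \emph{strict} coercivity estimate
\[
J^2(\hat v)[h,h]\ \ge\ \gamma\int_0^{p_0}\Big[(p_0-p)\,h'^2+(p_0-p)^{-1}h^2\Big]\,dp,
\]
obtained by subtracting $\gamma(p_0-p)$ from $A=f_{v'v'}$ and $\gamma(p_0-p)^{-1}$ from $C=f_{vv}$, checking that for small $\gamma>0$ the \emph{modified} Jacobi equation still has a nonvanishing solution on $[0,p_0)$ (this uses Proposition~\ref{prop:var_eq_sol_exists} and Corollary~\ref{cor:est_Jacobi_difference}), and then running the Riccati argument on the modified coefficients. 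The specific weighted form on the right is forced by the degeneracies at $p_0$: since $f_{v'v'}\sim c(p_0-p)$, no estimate of the classical type $\Delta_2\ge c\int h'^2$ can hold. Your claim that $W$ extends continuously to $p_0$ is also incorrect: along $\hat v$ one has $B=f_{vv'}\sim c(p_0-p)^{-3}$ while $f_{v'v'}\xi'/\xi\sim c$, so $W\sim -c(p_0-p)^{-3}$. What saves the boundary term is $h=O((p_0-p)^2)$, giving $Wh^2\to 0$.

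For the remainder, the paper does not try to bound $J^3$ by $\Delta_1+\Delta_2$; it bounds each piece of $J^3$ by $\|h''\|$ times the weighted quadratic functional above. The terms $f_{v'vv}$ and $f_{vvv}$ have brutal singularities of orders $(p_0-p)^{-5}$ and $(p_0-p)^{-6}$, and a direct estimate fails. One must integrate $3f_{v'vv}h'h^2$ by parts and observe a cancellation in $f_{vvv}-\frac{d}{dp}f_{v'vv}$ (the leading numerator vanishes at $p_0$ to second order) to bring the singularity down to $(p_0-p)^{-3}$, which then matches the weight $(p_0-p)^{-1}h^2$. This structural cancellation, together with the coercivity estimate, is the heart of the proof and is missing from your outline.
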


Let us note some important differences from the classical case: first, problem~\eqref{problem:main_maxwell} is not a problem of classical calculus of variations (since we are looking for a solutions in the class of convex curves); second, the Legendre condition vanishes, since~$\hat f_{v'v'}(p)>0$ for $p\in[0;p_0)$, but $\hat f_{v'v'}(p_0)=0$; and third, $f\to\infty$ at the right end~$p=p_0$.

The proof of the theorem  has the following structure. We start with the smoothness properties of the functional~$J$ and prove that it has the required number of derivatives. Next, we prove the nonnegativity of the first derivative, which follows from Pontryagin's Maximum Principle. Then we estimate the second derivative from below by a quadratic functional on~$h$ of special form. We finish by proving that a the third-order remainder term in the Taylor decomposition of~$J$ is bounded above by this quadratic functional.

Let us emphasize certain difficulties that appear in this scheme. The first one is related to the fact that the functional~$J$ has a singularity at the right end~$p_0$, so we are forced to make sharp estimates in working with its derivatives. The second one appears due to the fact that the Legendre condition vanishes at the right end, $\hat f_{v'v'}(p_0)=0$, so the classical estimates for the second and third derivatives of~$J$ do not work.

\medskip

Since~$f$ has a singularity at the right end, the Taylor decomposition for~$J$ does not follow from classical theory. So we start with the derivatives of~$J$ in the space~$PC^2$. The following representation is key for what follows.

\begin{prop}
	\label{prop:f_v_polinomial}
	The partial derivatives of~$f$ with respect to~$v$ have the following form:
	\begin{equation}
	\label{eq:f_v_polinomial}
	\frac{\partial^k}{\partial v^k}f = (v^2-p^2)^{-\frac{2k-1}{2}} P_k(p,v,v') + (pv'-v)(v^2-p^2)^{-\frac{2k+1}{2}}Q_k(p,v),
	\end{equation}
	\noindent where~the $P_k$ and~$Q_k$ are analytic functions that have poles only at thepoints~$v=0,\pm\mathrm{\bf i}$. Moreover, $P_k$ is a quadratic polynomial in~$v'$, and~$Q_k$ does not depend on~$v'$.
\end{prop}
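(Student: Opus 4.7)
The plan is to proceed by induction on $k$. The base case $k=0$ is immediate from the defining formula~\eqref{eq:f}: we may take
\[
P_0(p,v,v')=\frac{2(v')^2}{(1+v^2)^2},\qquad Q_0(p,v)=-\frac{1}{v(1+v^2)},
\]
which are manifestly analytic away from $v=0,\pm\ii$, with $P_0$ quadratic in $v'$ and $Q_0$ independent of $v'$.

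For the inductive step, I would assume the representation~\eqref{eq:f_v_polinomial} for $k$ and simply differentiate the right-hand side in $v$. The derivative of the first summand $(v^2-p^2)^{-(2k-1)/2}P_k$ equals
\[
(v^2-p^2)^{-(2k+1)/2}\bigl[-(2k-1)vP_k+(v^2-p^2)\partial_v P_k\bigr],
\]
which already has the form $(v^2-p^2)^{-(2(k+1)-1)/2}\cdot(\text{something})$ with no factor $(pv'-v)$. Differentiation of the second summand splits into two pieces: the derivative of the factor $(pv'-v)$ produces a $-1$ which kills the prefactor $(pv'-v)$, giving a contribution
\[
(v^2-p^2)^{-(2k+1)/2}\bigl[-Q_k+(pv'-v)\partial_v Q_k\bigr],
\]
while differentiating the power $(v^2-p^2)^{-(2k+1)/2}$ keeps the factor $(pv'-v)$ and produces the new pole order $(v^2-p^2)^{-(2(k+1)+1)/2}$. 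Collecting terms gives the recursion
\[
P_{k+1}=-(2k-1)vP_k+(v^2-p^2)\partial_v P_k-Q_k+(pv'-v)\partial_v Q_k,\qquad
Q_{k+1}=-(2k+1)v\,Q_k.
\]

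With this recursion, all three properties propagate trivially. The dependence on $v'$: $\partial_v Q_k$ is independent of $v'$, so $(pv'-v)\partial_v Q_k$ is affine (hence quadratic) in $v'$; combined with the quadratic-in-$v'$ terms $-(2k-1)vP_k$ and $(v^2-p^2)\partial_v P_k$, we see $P_{k+1}$ is quadratic in $v'$, while $Q_{k+1}=-(2k+1)v Q_k$ inherits independence of $v'$. The singularity set: multiplication by polynomials in $v$ and differentiation in $v$ preserve the property of having poles only at $v=0,\pm\ii$, so both $P_{k+1}$ and $Q_{k+1}$ retain this property.

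I do not expect a genuine obstacle here; the statement is structural. The only point that requires a moment of care is the bookkeeping that differentiating the prefactor $(pv'-v)$ produces exactly the additional $P_{k+1}$-contribution without disturbing the quadratic-in-$v'$ shape, and that the jump in the power of $(v^2-p^2)$ from $-(2k+1)/2$ to $-(2k+3)/2$ in the $Q$-branch matches the index shift $k\mapsto k+1$. Both are verified by the explicit computation above.
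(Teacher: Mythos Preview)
Your proposal is correct and follows essentially the same approach as the paper: an induction on $k$ starting from the explicit form of $f$ and carrying the representation through one $\partial_v$. The paper writes out only the passage $k=0\to k=1$ (obtaining $P_1=vP_0+(v^2-p^2)(P_0)'_v-Q_0+(pv'-v)(Q_0)'_v$, $Q_1=-vQ_0$) and then says ``repeat by induction''; your general recursion $P_{k+1}=-(2k-1)vP_k+(v^2-p^2)\partial_vP_k-Q_k+(pv'-v)\partial_vQ_k$, $Q_{k+1}=-(2k+1)vQ_k$ specializes exactly to this at $k=0$ and makes the inductive step fully explicit.
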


\begin{proof}
	The function~$f$ has the following structure:
	\[
	f=(v^2-p^2)^\frac12 P_0(p,v,v') + (pv'-v)(v^2-p^2)^{-\frac12}Q_0(p,v)
	\]
	\noindent where~$P_0$ and~$Q_0$ possess the desired properties. A direct differentiation gives
	\[
	f_v = (v^2-p^2)^{-\frac12} P_1(p,v,v') + (pv'-v)(v^2-p^2)^{-\frac32}Q_1(p,v),
	\]
	\noindent where~$P_1=vP_0 + (v^2-p^2)(P_0)'_v-Q_0 + (pv'-v)(Q_0)'_v$ and~$Q_1=-vQ_0$, i.e., $P_1$ is a quadratic polynomial in~$v'$, and~$Q_1$ does not depend on~$v'$. It remains to repeat this process by induction.
\end{proof}

\begin{corollary}
	\label{cor:f_v_est}
	Let $v\in PC^2[0;p_0]$, $v(p_0)=p_0$, $v'(p_0)=1$, $v''(p_0)>0$, and let $v(p)>p$ for $p\in[0;p_0)$. The following estimates hold
	\[
	\begin{array}{cc}
	\left|\frac{\partial^k}{\partial v^k} f\right| \le c_k (p_0-p)^{-2k} &
	\left|\frac{\partial}{\partial v'}\frac{\partial^k}{\partial v^k} f\right| \le c_k(p_0-p)^{-2k-1}\\
	\left|\frac{\partial^2}{\partial v'^2}\frac{\partial^k}{\partial v^k} f\right| \le c_k(p_0-p)^{-2k+1} &
	\frac{\partial^3}{\partial v'^3}\frac{\partial^k}{\partial v^k} f \equiv0\\
	\end{array}
	\]
	\noindent where the constant~$c_k$ depends (monotonically\footnote{Namely, the lower written norms, the smaller are the $c_k$.}) on\footnote{Remark that all these norms are finite.}~$\|v\|$, $\|v'\|$, $\|v''\|$, $\|\frac1v\|$ and $\|\frac{(p_0-p)^2}{v-p}\|$.
\end{corollary}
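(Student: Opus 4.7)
The plan is to deduce all four estimates directly from the representation
\[
\frac{\partial^k}{\partial v^k}f = (v^2-p^2)^{-\frac{2k-1}{2}} P_k(p,v,v') + (pv'-v)(v^2-p^2)^{-\frac{2k+1}{2}} Q_k(p,v)
\]
furnished by Proposition~\ref{prop:f_v_polinomial}, supplemented by three elementary pointwise bounds on the factors appearing on the right. First, $v+p\ge 1/\|1/v\|>0$, so powers of $v+p$ contribute only a harmless constant. Second, the hypothesis $\|(p_0-p)^2/(v-p)\|<\infty$ immediately yields $(v-p)^{-s}\le C(p_0-p)^{-2s}$, and therefore $(v^2-p^2)^{-s}\le C'(p_0-p)^{-2s}$ for any $s>0$. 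Third, I would prove $|pv'-v|\le C(p_0-p)$ by noting $\frac{d}{dp}(pv'-v)=pv''\ge 0$ (by convexity), so $pv'-v$ is nondecreasing, vanishes at $p_0$, and is therefore squeezed as $|pv'-v|\le p_0\|v''\|(p_0-p)$ upon integrating from $p$ to $p_0$.

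With these in hand, $P_k$ and $Q_k$ are continuous on the compact box determined by $v\in[1/\|1/v\|,\|v\|]$, $p\in[0,p_0]$, $|v'|\le\|v'\|$, which avoids their only singularities at $v=0,\pm\ii$; hence they are bounded by constants depending monotonically on the stated norms. Substituting gives
\[
\left|\frac{\partial^k}{\partial v^k}f\right|\le C(p_0-p)^{-(2k-1)}+C(p_0-p)\cdot(p_0-p)^{-(2k+1)}\le c_k(p_0-p)^{-2k},
\]
which is the first inequality (the $Q_k$-term dominates). For the $v'$-derivatives I would exploit the fact that the representation exhibits $\partial^k_v f$ as a polynomial in $v'$ of degree at most~$2$: the $P_k$-piece is quadratic in $v'$, while the $Q_k$-piece enters only through the linear factor $pv'-v$. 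A single $v'$-differentiation yields $(v^2-p^2)^{-(2k-1)/2}\partial_{v'}P_k$ plus the sharper $p\,(v^2-p^2)^{-(2k+1)/2}Q_k$, bounded by $c_k(p_0-p)^{-2k-1}$; a second differentiation annihilates the $Q_k$-contribution entirely, leaving only $(v^2-p^2)^{-(2k-1)/2}\partial^2_{v'}P_k$, bounded by $c_k(p_0-p)^{-2k+1}$; the third derivative vanishes identically thanks to the quadratic structure.

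I do not foresee any deep obstacle. The point requiring the most care is the matching of exponents between the two summands of the representation: the $P_k$-term contributes one power of $(p_0-p)$ less than the claimed bound, and it is precisely the vanishing $pv'-v=O(p_0-p)$, a consequence of the first-order contact $v(p_0)=p_0$, $v'(p_0)=1$ at the singular endpoint, that makes the $Q_k$-term comparable to and dominant over the $P_k$-term. Monotone dependence of $c_k$ on the five listed norms is inherited directly from the monotone dependence of each bound in the chain.
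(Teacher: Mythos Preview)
Your proposal is correct and follows essentially the same route as the paper: both arguments plug the representation of Proposition~\ref{prop:f_v_polinomial} into the four estimates, bound $P_k$ and $Q_k$ on a compact box in $(p,v,v')$ avoiding $v=0,\pm\ii$, and combine the three elementary pointwise bounds $v+p\ge\|1/v\|^{-1}$, $(v-p)^{-1}\le\|(p_0-p)^2/(v-p)\|\,(p_0-p)^{-2}$, and $|pv'-v|\le p_0\|v''\|(p_0-p)$.

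One small correction: the corollary does not assume convexity of~$v$, so your appeal to $pv''\ge 0$ is unjustified as stated. However, the integration you indicate already gives the bound without it, since $pv'(p)-v(p)=-\int_p^{p_0} s\,v''(s)\,ds$ yields $|pv'-v|\le p_0\|v''\|(p_0-p)$ directly; this is exactly the estimate the paper records.
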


\begin{proof}
	Let us use representation~\eqref{eq:f_v_polinomial}. Fix an arbitrary number~$c>0$. Consider a compact domain in~$\R^3=\{(p,v,v')\}$ given by the conditions~$p\in[0;p_0]$, $|v'|\le c$ and $\frac1c\le v\le c$. The functions~$P_k$ and~$Q_k$ are continuous on this domain; thus, they are bounded above. It remains to note that
	\begin{gather*}
	|v^2(p)-p^2|\le \frac12\|v''\|(p_0-p)^2,\\
	|v(p)-p| \ge \left\|\frac{(p_0-p)^2}{v-p}\right\|^{-1}(p_0-p)^2,\\
	|v(p)+p| \ge \left\|\frac1v\right\|^{-1},\\
	|pv(p)-v'(p)|\le p_0\|v''\|(p_0-p).
	\end{gather*}
\end{proof}

Denote by~$J^k(v)[h_1,\ldots,h_k]$ the $k$-variation of the functional~$J$, i.e.,
\[
J^0(v) = \int_0^{p_0}f\,dp,
\]
\[
J^1(v)[h_1] = \int_0^{p_0} (f_vh_1+f_{v'}h_1')\,dp,
\]
\[
J^2(v)[h_1,h_2] = \int_0^{p_0} (f_{vv}h_1h_2+f_{vv'}(h_1h_2'+h_1'h_2) + f_{v'v'}h_1'h_2')\,dp,
\]
\noindent etc.

\begin{prop}
	\label{prop:J_direct_deriv}
	Let~$v$ satisfy the conditions of Corollary~\ref{cor:f_v_est}, $h_i\in PC^2[0;p_0]$, and let $h_i(p_0)=h_i'(p_0)=0$. Then, for any~$k$, the variation~$J^k(v)[h_1,\ldots,h_k]$ is finite, continuously depends on~$v$, and
	\[
	\exists\ \frac{d}{d\lambda}\Big|_{\lambda=0}J^k(v+\lambda h_{k+1})[h_1,\ldots,h_k] = J^{k+1}(v)[h_1,\ldots,h_k,h_{k+1}].
	\]
\end{prop}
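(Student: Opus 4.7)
The plan is to reduce everything to the explicit structural formula of Proposition~\ref{prop:f_v_polinomial} combined with the bounds of Corollary~\ref{cor:f_v_est}, exploiting that $h_i(p_0)=h_i'(p_0)=0$ and $h_i\in PC^2$ imply the decay $|h_i(p)|\le\tfrac12\|h_i''\|(p_0-p)^2$ and $|h_i'(p)|\le\|h_i''\|(p_0-p)$ by Taylor's theorem with integral remainder. For finiteness of $J^k(v)[h_1,\ldots,h_k]$, I would expand the integrand into a finite sum of terms of the form $\partial_v^a\partial_{v'}^b f\cdot\prod_{\ell}h_{i_\ell}\cdot\prod_{m}h_{j_m}'$ with $a+b=k$ and count powers of $(p_0-p)$. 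In the worst case $b=0$, Corollary~\ref{cor:f_v_est} gives $|\partial_v^k f|\le c_k(p_0-p)^{-2k}$ which is exactly cancelled by $\prod_{\ell=1}^k h_{i_\ell}=O((p_0-p)^{2k})$; for $b=1$ one combines $(p_0-p)^{-(2k-1)}$ with $(p_0-p)^{2(k-1)+1}$; for $b=2$ the bound $(p_0-p)^{-(2k-3)}$ is matched by $(p_0-p)^{2(k-2)+2}$, leaving an extra positive power; and $\partial_{v'}^3\partial_v^a f\equiv 0$ ends the cases. Each integrand is therefore uniformly bounded on $[0;p_0]$.

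Next I would prove continuity of $J^k(v)[h_1,\ldots,h_k]$ in $v$ and the derivative formula simultaneously. If $v_n\to v$ in $PC^2[0;p_0]$ with the same endpoint data at $p_0$, the strict inequality $v''(p_0)>0$ combined with $v(p_0)=p_0$, $v'(p_0)=1$ yields $v(p)-p\ge c(p_0-p)^2$ near $p_0$, so $\|(p_0-p)^2/(v-p)\|$ is finite; the same bound persists along $v_n$ with a common constant, making the five norms controlling $c_k$ in Corollary~\ref{cor:f_v_est} uniformly bounded. Hence the integrands share a common $L^\infty$ majorant and dominated convergence yields continuity. For the $\lambda$-derivative, pick $\lambda$ small enough that $|h_{k+1}(p)|\le c(p_0-p)^2$ forces $(v+\lambda h_{k+1})(p)-p\ge\tfrac12(v(p)-p)$; then the hypotheses of Corollary~\ref{cor:f_v_est} hold uniformly for $v+\lambda h_{k+1}$. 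Writing the first-order Taylor expansion with integral remainder
\[
\partial_v^a\partial_{v'}^b f(p,v+\lambda h_{k+1},v'+\lambda h_{k+1}')=\partial_v^a\partial_{v'}^b f(p,v,v')+\lambda\bigl(\partial_v^{a+1}\partial_{v'}^b f\cdot h_{k+1}+\partial_v^a\partial_{v'}^{b+1}f\cdot h_{k+1}'\bigr)+\lambda^2 R_{a,b},
\]
I would regroup the $\lambda$-linear contributions: by the Leibniz rule they assemble exactly into the integrand of $J^{k+1}(v)[h_1,\ldots,h_{k+1}]$. The remainder $R_{a,b}$ involves one further derivative of $f$ evaluated at an intermediate argument and is controlled by the same power counting (with one extra $h$-factor) as in the first step, hence is dominated uniformly in $\lambda$. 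Dividing by $\lambda$ and passing to the limit via dominated convergence gives the differentiation formula.

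The central obstacle throughout is the tightness of the cancellation of powers of $(p_0-p)$: the singular factor $(v^2-p^2)^{-(2k+1)/2}$ in Proposition~\ref{prop:f_v_polinomial} is precisely compensated by $(pv'-v)\cdot\prod h_i\cdot\prod h_j'$ only because $v$ has a nondegenerate zero of order two of $v-p$ at $p_0$ and each $h_i$ has a zero of order at least two there. Any relaxation of $h_i(p_0)=h_i'(p_0)=0$, or loss of the strict inequality $v''(p_0)>0$, immediately destroys both finiteness and the dominated-convergence arguments; the robustness of the double zero under the perturbation $v\mapsto v+\lambda h_{k+1}$ is what makes the derivative formula work.
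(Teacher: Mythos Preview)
Your proposal is correct and follows essentially the same route as the paper: the power-counting argument for finiteness (splitting by the number of $v'$-derivatives, using $f_{v'v'v'}\equiv 0$, and matching the singularities from Corollary~\ref{cor:f_v_est} against the decay of $h_i,h_i'$ at $p_0$) is identical, and both the continuity and the differentiation statements are obtained via dominated convergence with the integrand controlled by the mean-value / Taylor remainder bounded through Corollary~\ref{cor:f_v_est}. Your explicit remark that the five norms governing $c_k$ remain uniformly bounded along $v+\lambda h_{k+1}$ (because $h_{k+1}$ has a double zero at $p_0$ and $v''(p_0)>0$) is exactly the point the paper uses implicitly; there is a harmless arithmetic slip in your $b=2$ exponent (the bound is $(p_0-p)^{-(2k-5)}$, not $(p_0-p)^{-(2k-3)}$, so you in fact gain $(p_0-p)^3$), but this only strengthens your conclusion.
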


\begin{proof}
	First, we show that the $k$-variation is finite. Since~$f_{v'v'v'}\equiv0$, the integrand in~$J^k$ has only three types of terms:
	\[
	J^k=\int_0^{p_0}
	\left[
	\frac{\partial^k f}{\partial v^k} \prod_i h_i + 
	\frac{\partial^{k-1}f_{v'}}{\partial v^{k-1}} 
	\sum_i h_i'\prod_{l\ne i}h_l +
	\frac{\partial^{k-2}f_{v'v'}}{\partial v^{k-2}}
	\sum_{i<j} h_i'h_j'\prod_{l\ne i,j}h_l
	\right]\,dp.
	\]
	\noindent We know that~$|h_i|\le\frac12\|h_i''\|(p_0-p)^2$ and~$|h_i'|\le\|h_i''\|(p_0-p)$ by assumption. Thus, it follows from Corollary~\ref{cor:f_v_est} that the first term in the integrand is bounded by the constant~$2^{-k}c_k\prod_i\|h_i''\|$, the second term by the constant~$2^{k-1}kc_{k-1}\prod_i\|h_i''\|$, and the third term by the function~$2^{k-3}k(k-1)c_{k-2}\prod_i\|h_i''\| (p_0-p)^3$. Consequently, the given integral is finite.
	
	Now we prove the continuity for~$k=0$.
	\[
	J(v+h)-J(v) = \int_0^{p_0}\big[
	f(p,v,v')-f(p,v+h,v'+h')
	\big]\,dp
	\]
	\noindent Lagrange's formula for the integrand shows that it is bounded for each~$p\ne p_0$ by the function~$R=|f_v(p,w,W)||h| + |f_{v'}(p,w,W)||h'|$, where~$w(p)=v(p)+\mu(p)h(p)$, $W(p)=v'(p)+\mu(p)h'(p)$ and $\mu(p)\in[0;1]$. The function~$R$ is uniformly bounded in a neighborhood of~$v$ by the constant~$(\frac12c_1 + c_0)\|h''\|$ (this follows from corollary~\ref{cor:f_v_est}). Thus,~$J(v+h)-J(v)=O(\|h''\|)$.
	
	Let us now compute the directional derivative for~$k=0$:
	\[
	\frac{J(v+\lambda h)-J(v)}{\lambda} = 
	\int_0^{p_0}\frac{f(p,v+\lambda h,v'+\lambda h)-f(p,v,v')}{\lambda}\,dp.
	\]
	\noindent We use Lagrange's formula for the integrand again: it is bounded for each~$p$ by the function~$|f_v(p,v+\mu h,v'+\mu h)||h| + |f_{v'}(p,v+\mu h,v'+\mu h)||h'|$ for some $|\mu|\le|\lambda|$. This function is bounded for all~$p$ and small enough~$\mu$ by Corollary~\ref{cor:f_v_est}. Therefore, using Lebesgue's theorem, we can pass to the pointwise limit under the integral sign.
	
	The proof for $k\ge 1$ is similar. Namely, we again use Lagrange's formula for the integrand in the difference $J^k(v+h_k)-J^k(v)$ to prove that it is bounded above by the sum of absolute values of all terms in the integrand in~$J^{k+1}$ for some~$\mu\in[0;1]$. We again use Corollary~\ref{cor:f_v_est} to prove that all these terms are bounded for all~$p$, which leads to the continuity of~$J^k$ . The directional derivative of~$J^k$ can again be computed by Lebesgue's theorem, since its conditions are fulfilled by Corollary~\ref{cor:f_v_est}.	
\end{proof}

\begin{proof}[Proof of Theorem~\ref{thm:second_var}.]
	
	First, we prove that the functional~$J$ is differentiable in the Fr\'echet sense (in the space~$X=\{v\in PC^2[0;p_0]:v(0)=M,v(p_0)=p_0, v'(p_0)=1\}$, i.e., the variation~$h$ belongs to the space~$Y=\{h\in PC^2[0;p_0]:h(0)=h(p_0)=h'(p_0)=0\}$) and its derivative~$J'$ coincides with the first variation~$J^1$. So let~$v\in X$ be an arbitrary admissible trajectory. Let us prove that
	\[
	\forall h\in Y\qquad J(v+h) - J(v) - J^1(v)[h] = O(\|h''\|^2).
	\]
	\noindent To do this, we represent this difference in the following form:
	\begin{multline*}
	J(v+h) - J(v) - J^1(v)[h] = 
	\int_0^1 \frac{d}{d\mu}(J(v+\mu h) - \mu J^1(v)[h])\,d\mu=\\
	=\int_0^1 (J^1(v+\mu h)[h] - J^1(v)[h])\,d\mu=
	J^1(v+\mu h)[h] - J^1(v)[h].
	\end{multline*}
	\noindent The first equality holds, since the derivative of~$J$ wrt the given direction~$h$ exists and continuously depends on~$v$ by Proposition~\ref{prop:J_direct_deriv}. The last equality holds for some~$\mu\in[0;1]$, since the integrand continuously depends on~$\mu$. Next,
	\begin{multline*}
	J^1(v+\mu h)[h] - J^1(v)[h]=
	\int_0^\mu \frac{d}{d\lambda}J^1(v+\lambda h)[h]\,d\lambda=\mu J^2(v+\lambda h)[h,h]=\\
	=\mu\int_0^{p_0} \big[
	f_{v'v'}(p,\tilde v,\tilde v') h'^2 + 
	2f_{vv'}(p,\tilde v,\tilde v')hh' + 
	f_{vv}(p,\tilde v,\tilde v')h^2
	\big]\,dp,
	\end{multline*}
	\noindent where~$\tilde v=v+\lambda h$ for some $\lambda\in[0;\mu]$, which does not depend on~$p$. Using Corollary~\ref{cor:f_v_est}, we see that the integrand is bounded by the function~$(c_0(p_0-p)^3+\frac12c_1 + \frac14c_2)\|h''\|^2$ not depending on~$\lambda$. This proves the differentiability of~$J$. Similarly, we can prove that the $k$-th derivative of the functional~$J$ exists, and it coincides with the $k$-variation~$J^k$.
	
	Now let us prove that~$\hat v$ is a local minimum of the functional~$J$, i.e., there exists an~$\varepsilon>0$, such that~$J(\hat v+h)\ge J(\hat v)$ if the curve~$\hat v+h$ is convex, $h(0)=h(p_0)=h'(p_0)=0$, and~$\|h''\|\le\varepsilon$. The curve~$\hat v+h$ satisfies the inequality~$v\ge p$ automatically if the norm~$\|h''\|$ is small enough, since~$v''(p)>0$ in a neighborhood of~$p_0$.
	
	We start with the nonnegativity of the first directional derivative wrt any admissible direction. We obtain the following equality by integrating $J^1(\hat v)[h]$ by parts twice:
	\[
	J^1(\hat v)[h] = \varphi h\big|_0^{p_0} + \psi h'\big|_0^{p_0} - \int_0^{p_0}\psi h''\,dp.
	\]
	\noindent We claim that the first term vanishes, because~$h(0)=h(p_0)=h'(p_0)=0$. Indeed, the function~$\varphi$ is continuous on~$[0;p_0)$ and it has a singularity of~$c(p_0-p)^{-1}$ type at~$p_0$, since $\hat v$ is a singular extremal in a neighborhood of~$p_0$, on which~$\varphi\equiv \hat f_{v'}$. The second term vanishes by the orthogonality conditions~$\psi(0)=\psi(p_0)=0$. Let us consider the third term. We know that~$\psi\le 0$, and if~$v''(p)>0$ for some~$p$, then~$\psi(p)=0$. The function~$\hat v+h$ must be convex (since~$h$ is an admissible variation). Therefore, if~$h''(p)<0$ at a point~$p$, then~$v''(p)>0$, and thus~$\psi(p)=0$. Consequently, $\psi h''\le 0$, and we have~$J^1(\hat v)[h]\ge 0$.
	
	Consider the second derivative
	\[
	J^2(\hat v)[h,h] = \int_0^{p_0}\big[ A(p) h'^2(p) + 2B(p)h'(p)h(p) + C(p)h^2(p)\big]\,dp,
	\]
	\noindent where~$A=\hat f_{v'v'}$, $B=\hat f_{v'v}$ and~$C=\hat f_{vv}$. Note that~$A>0$ for~$p\in[0;p_0)$, but~$A(p_0)=0$. Therefore, the classical estimates from the calculus of variations do not work. Usually, if the Legendre condition does not vanish, then the second variation can be estimated from below by~$\int h'^2\,dp$. This estimate does not work for the problem under consideration. Here the lower bound is completely different. Looking ahead, we can say that the lower bound should not destroy the structure of Jacobi equation~\eqref{eq:type_of_Jacobi_eq}.
	
	So we have found a correct quadratic order of the functional$J$. Specifically, we shall prove the following lower estimate for~$J^2$ for all not necessarily admissible~$h$:
	\begin{equation}
	\label{eq:second_var_positive}
	J^2[h,h]\ge \gamma\int_0^{p_0} \big[(p_0-p)h'^2 + (p_0-p)^{-1}h^2\big]\,dp.
	\end{equation}
	\noindent Here~$\gamma>0$ is a fixed number. The quadratic functional on the right-hand side will play a key role in proving the upper estimate of the third variation by using the second one. The result will prove the optimality of~$\hat v$.
	
	The proof of estimate~\eqref{eq:second_var_positive} is based on a trick suggested by Legendre. Let us add and subtract from the left-hand side of~\eqref{eq:second_var_positive} the following term~$\int_0^{p_0}\frac{d}{dp}(\zeta h^2)dp$ with some function~$\zeta(p)$. For short, we denote ~$\tilde A=A-\gamma (p_0-p)$ and~$\tilde C = C-\gamma(p_0-p)^{-1}$. Then
	\begin{multline*}
	J^2(\hat v)[h,h]-\gamma\int_0^{p_0} \big[(p_0-p)h'^2 + (p_0-p)^{-1}h^2\big]\,dt = \\
	=\int_0^{p_0}\big[\tilde Ah'^2 + 2(B+\zeta)h'h + (\tilde C+\zeta')h^2\big]\,dp - \zeta h^2\big|_0^{p_0}=\\
	=\int_0^{p_0}\left[\tilde A\Big(h'+\frac{B+\zeta}{\tilde A}h\Big)^2 + \Big(\zeta' - \frac{(B+\zeta)^2}{\tilde A} + \tilde C\Big)h^2\right]\,dp - \zeta h^2\big|_0^{p_0}.
	\end{multline*}
	\noindent We claim that the first term in the written integral is nonnegative. Indeed, $A(p)\ge 0$ and the equality holds only at the right end. Moreover,
	\[
	A(p_0-\Delta p) = \left(\frac{4\sqrt{p_0\hat v''(0)}}{1+p_0^2}+o(1)\right)\Delta p.
	\]
	\noindent Thus, if~$\gamma>0$ is small enough, then~$\tilde A\ge 0$. Therefore, if we find a function~$\zeta$ such that the second term in the integrand and the terminant both vanish, then this will prove the positive definiteness of the second variation~$J^2$ in terms of the quadratic functional in~\eqref{eq:second_var_positive}.
	
	The Jacobi equation~$\xi''=-\frac{A'}{A}\xi' + \frac{C-B'}{A}\xi$ has the form~\eqref{eq:type_of_Jacobi_eq} with~$\lambda=-\frac14$ and $\sigma\equiv0$. Thus, it has a unique solution with initial conditions~$\xi(p_0)=0$ and~$\xi'(p_0)=1$ by Proposition~\ref{prop:var_eq_sol_exists}, and this solution does not vanish on~$[0;p_0)$ as shown in Sec.~\ref{sec:field_of_extremals}. Consider the modified Jacobi equation~$\tilde\xi''=-\frac{\tilde A'}{\tilde A}\tilde\xi' + \frac{\tilde C-B'}{\tilde A}\tilde\xi$, which also has the form~\eqref{eq:type_of_Jacobi_eq} with the same~$\lambda=-\frac14$ and~$\sigma\equiv0$. Therefore, the modified equation also has a unique solution with the same initial data by Proposition~\ref{prop:var_eq_sol_exists}. Using Corollary~\ref{cor:est_Jacobi_difference}, we can estimate the difference between the second derivatives of~$\xi$ and~$\tilde\xi$ by a value of order~$O(\gamma)$. Thus,~$\tilde\xi$ also does not vanish on~$[0;p_0)$ if~$\gamma$ is small enough. 
	
	Now we prove estimate~\eqref{eq:second_var_positive} by putting~$\zeta = -\frac{\tilde A\tilde \xi'}{\tilde\xi} - B$. First (I), let us check the continuity of~$\zeta$. Since~$\tilde\xi\in C^2$ and $\tilde\xi(p)\ne 0$ for $p\in[0;p_0)$, we have $\zeta\in C^1[0;p_0)$. Second (II), let us check that the function~$\zeta$ satisfies the following Riccati equation:
	\[
	\zeta' = \frac{(B+\zeta)^2}{\tilde A} - \tilde C,
	\]
	\noindent Indeed,
	\[
	\zeta'\tilde\xi + \zeta\tilde\xi' = \frac{d}{dp}(\zeta\tilde\xi) = -B\tilde\xi'-\tilde C\tilde\xi
	\quad\Rightarrow\quad
	\zeta'=-(\zeta+B)\frac{\tilde\xi'}{\tilde\xi}-\tilde C = \frac{(\zeta+B)^2}{\tilde A} - \tilde C.
	\]
	\noindent Third (III), let us determine the asymptotic of~$\zeta h^2$ at~$p_0$ from the left. A direct computation gives
	\[
	B(p_0-\Delta p) = \left(\frac{p_0}{(1+p_0^2)(p_0\hat v''(0))^{3/2}}+o(1)\right)\Delta p^{-3}
	\]
	\noindent Since~$\tilde\xi(p_0)=0$ and~$\tilde\xi'(p_0)\ne 0$, we have $\tilde\xi'/\tilde\xi = -(1+o(1))\Delta p^{-1}$. So
	\[
	\zeta(p_0-\Delta p) = -\left(\frac{p_0}{(1+p_0^2)(p_0\hat v''(0))^{3/2}}+o(1)\right)\Delta p^{-3},
	\]
	\noindent and $\zeta h^2\to 0$ as $\Delta p\to 0$, since $h(p_0)=h'(p_0)=0$.
	
	Putting together the first (I), second (II), and third (III) facts about~$\zeta$ we prove estimate~\eqref{eq:second_var_positive}.
	
	Now we prove that there exists an~$\varepsilon>0$ such that if $\|h''\|\le\varepsilon$, then $J(\hat v+h)\ge J(\hat v)$. Obviously,
	\begin{multline*}
	J(\hat v+h) - J(\hat v) - J^1(\hat v)[h] - \frac12 J^2(\hat v)[h,h] = \frac16J^3(v)[h,h,h] = \\
	= \frac16\int_0^{p_0} \big[
	f_{v'v'v'}h'^3 + 3f_{v'v'v}h'^2h + 3f_{v'vv}h'h^2 + f_{vvv}h^3
	\big]\,dp,
	\end{multline*}
	\noindent where we must substitute~$v=\hat v+\lambda h$ for some~$\lambda\in[0;1]$ into the right-hand side. We shall prove that if~$\|h''\|$ is small enough, then the right-hand side is bounded above by the second variation, i.e., there exists a constant~$c>0$, such that
	\begin{equation}
	\label{eq:J_third_estimate}
	\boxed{J^3(v)[h,h,h]\le c\|h''\|J^2(\hat v)[h,h]}
	\end{equation}
	\noindent for all $h$ with small enough norm~$\|h''\|$.
	
	We shall use estimate~\eqref{eq:second_var_positive}. The first term on the right-hand side disappears, since~$f_{v'v'v'}\equiv0$. For the second term we have~$|f_{v'v'v}|\le c_1(p_0-p)^{-1}$ by Corollary~\ref{cor:f_v_est}. Since~$|h|\le \frac12 (p_0-p)^2\|h''\|$, we have $|f_{v'v'v}h'^2h|\le \frac12 c_1\|h''\|(p_0-p)h'^2$, and the last term is bounded by estimate~\eqref{eq:second_var_positive}. Let us proceed to the last two terms. Unfortunately, both~$f_{v'vv}$ and~$f_{vvv}$ have harsh singularities at~$p_0$ of orders~$(p_0-p)^{-5}$ and~$(p_0-p)^{-6}$, respectively. But the frightening view of the singularities is the result of a cursory examination. Let us make it more precise. We eliminate these singularities by integrating the term~$3f_{v'vv}h'h^2$ by parts:
	\[
	\int_0^{p_0} \big[
	3f_{v'vv}h'h^2 + f_{vvv}h^3
	\big]\,dp = 
	\int_0^{p_0} \big[
	(f_{vvv} - \frac{d}{dp}f_{v'vv})h^3
	\big]\,dp + 
	f_{v'vv}h^3|_0^{p_0}.
	\]
	\noindent The terminant vanishes, since~$f_{v'vv}$ is continuous at the left end~$p=0$ and~$h(0)=0$, and~$|f_{v'vv}|\le c_0(p_0-p)^{-5}$ at the right end and $|h(p)|\le \frac12\|h''\|(p_0-p)^2$, i.e.,~$|f_{v'vv}h^3|\le c(p_0-p)$. So let us estimate the integrand. A direct computation gives
	\[
	f_{vvv} - \frac{d}{dp}f_{v'vv} = 
	\frac{R(p,v,v',v'')}{(1+v^2)^5(v^2-p^2)^{\frac32}} + 
	6\frac{2 p^3 v'-v^3 (1+v'^2)}{(1+v^2)^2(v^2-p^2)^{\frac52}},
	\]
	\noindent where~$R$ is a polynomial in~$p$, $v$, $v'$ and~$v''$, the explicit form is of no importance for us. Let us estimate the numerator of the second fraction. Since it vanishes at~$p_0$, it is an integral of its derivative:
	\[
	2 p^3 v'-v^3 (1+v'^2) = -\int_p^{p_0} \big[
	3(2p^2-v^2-v^2v'^2) + 2v'' (p^3-v^3v')
	\big]\,dp.
	\]
	\noindent Here both terms in the integrand also vanish at~$p_0$. Thus,
	\[
	|2p^2-v^2-v^2v'^2|=\Big|\int_{p_0}^p (4p - 2vv' -2vv'^3-2v^2v'v'')\,dp\Big|	\le 2(4p_0+\|v\|\|v'\|(1+\|v'\|^2+\|v\|\|v''\|))(p_0-p);
	\]
	\[
	|p^3-v^3v'| = \Big|\int_{p_0}^p (3p^2-3v^2v'^2-v^3v'')\,dp\Big|\le(3p_0^2 + \|v\|^2(3\|v'\|^2+\|v\|\|v''\|))(p_0-p).
	\]
	
	Therefore, the estimated numerator is bounded by the function~$c(p_0-p)^2$, where the constant~$c$ can be chosen independently of the norm~$\|h''\|$ (if the latter is small enough). Consequently, $|f_{vvv} - \frac{d}{dp}f_{v'vv}|\le c(p_0-p)^{-3}$ if we increase~$c$. Thus,
	\[
	\left|\Big(f_{vvv} - \frac{d}{dp}f_{v'vv}\Big)h^3\right| \le \frac12 c\|h''\| (p_0-p)^{-1}h^2.
	\]
	\noindent Here we agin use estimate~\eqref{eq:second_var_positive}, which proves the key estimate~\eqref{eq:J_third_estimate} for the remainder term of third variation. 
	
	So there exists an~$\varepsilon>0$ and~$C>0$ such that if~$\|h''\|\le\varepsilon$, $h(0)=h(p_0)=h'(p_0)=0$, and the function~$v+h$ is convex, then 
	\[
	\boxed{J(\hat v+h)\ge J(\hat v) + C\int_0^{p_0}\big[(p_0-p) h'^2 + (p_0-p)^{-1}h^2\big]\,dp}
	\]
	\noindent and so $J(\hat v+h)\ge J(\hat v)$. Quod erat demonstrandum!
\end{proof}

\section{Conclusions}

Let us give a full description of found solutions in Newton's aerodynamic problem. The shape of the body in problem~\eqref{problem:start} is constructed by the following steps. We start with the solution~$v(p)$ on~$[0;p_0]$ of the Euler--Lagrange equation~\eqref{eq:main_Lagrange_eq} for singular extremals with initial conditions~$v(p_0)=p_0$, $v'(p_0)=1$. Then it is necessary to change~$v$ on~$[0;r(p_0)]$ to a linear function with the condition of continuous junction of~$v$ and~$v'$ at~$r(p_0)$. The switching point~$r(p_0)$ is found from condition~\eqref{eq:I_def} (where the corresponding linear function~$v$ is substituted into the integral~\eqref{eq:I_def}). The result is an extremal in the key optimal control problem~\eqref{eq:maxwell_integral_value} (for a more detailed construction of the extremals for all large enough~$p_0$, see Sec.~\ref{sec:field_of_extremals}). Next,
\begin{enumerate}[label=(\roman*)]
	\item the constructed extremal~$v$ is prolonged by the line~$v=p$ for~$p\ge p_0$;
	\item it is reflected in the plane~$\R^2=\{(v,p)\}$ wrt the vertical line~$p=0$. The result is a convex function~$v(p)$ defined for all~$p\in\R$, which is $C^1$-smooth on~$\R\setminus\{0\}$ and has a corner at~$0$;
	\item the conjugate function~$v^*(x_1)=\sup_{p}(px_1-v(p))$ is the intersection of the boundary of the desired 3D convex body with the vertical symmetry plane~$\{x_2=0\}$. The desired body $\hat u_M\in E_M$ is the convex hull of this curve and the unit circle~$\Omega$ lying in the base.
\end{enumerate}

The curve~$v^*(x_1)$ has height~$M=v(0)>0$. It has a horisontal segment on the lower bounding plane~$\{z=-M\}$. This segment has length~$2v'(+0)$. The derivative of~$v^*$ at the ends of this segment has jumps that equal~$r(p_0)$. The one-side derivatives at the points~$\pm1$ are equal in absolute value to~$|(v^*)'(\pm1\mp0)|=p_0$. The resistance~$\mathcal{J}$ of the constructed body $\hat u_M$ equals~$2J(v)$. We know two papers~\cite{LachardNumeric,Wachsmuth2014} where some numerical computations were done for Newton's aerodynamic problem for some heights~$M\le1.5$. Note that the numerical result in the Table~\ref{tab1} for~$M=1.5$ gives the value of the functional~$\mathcal{J}$, which agrees well with numerical computations in the above-mentioned papers: it is~$\sim2\times10^{-4}$ less than the result in~\cite{LachardNumeric}, and it is~$\sim10^{-3}$ greater than the result in~\cite{Wachsmuth2014}. Numerical computations in both papers were done by discretization by the infimum of hyperplanes from a large family\footnote{Another method was also used in~\cite{Wachsmuth2014} for heights~$M\le 1.4$.}. This choice of heights in~\cite{LachardNumeric,Wachsmuth2014} is connected with the following fact: for small values of the height~$M$, the optimal solution in the class~$E_M$ is definitely not globally optimal in the original class~$C_M$, since gradients outside the symmetry plane become less than~$1$ in absolute value (see~\cite[Theorem~2.3]{Buttazzo1995}). For large enough $M$ due to numerical experiment in~\cite{LachardNumeric,Wachsmuth2014} the lower set $u^{-1}(-M)$ of the optimal solution $u$ in $C_M$ is a segment, and the body contains corners along some 1-dimensional curve in the vertical plane of symmetry $\{x_2=0\}$. But it may happen that the optimal body side boundary contains additional corners.

\bigskip

So we have found localy optimal shape of bodies in the class of convex bodies with a vertical palne of symmetry having a smooth side boundary (see Theorem~\ref{thm:second_var}).

\setlength{\tabcolsep}{25pt}
\renewcommand{\arraystretch}{1.2}
\begin{table}
	\centering
	\begin{tabular}{lllll}
		\Xhline{2\arrayrulewidth}\\[-5mm]
		\ $M$  &\ \ \ $p_0$   &\ \ $r(p_0)$ &\ \ $v'(+0)$ &\ \ \ \ \ $J$\\[2mm]
		\Xhline{4\arrayrulewidth}\\[-5mm]
		$0.5$  & $2.43337$ & $1.33559$ & $0.744669$ & $1.06309\times10^0$ \\
		$1.0$  & $3.71647$ & $1.22077$ & $0.632450$ & $5.97791\times10^{-1}$ \\
		$1.5$  & $5.14856$ & $1.19669$ & $0.586444$ & $3.50482\times10^{-1}$ \\
		$2.0$  & $6.64354$ & $1.23585$ & $0.564900$ & $2.22512\times10^{-1}$ \\
		$2.5$  & $8.16986$ & $1.31540$ & $0.553467$ & $1.51524\times10^{-1}$ \\
		$5.0$  & $15.9653$ & $1.96456$ & $0.536348$ & $4.14500\times10^{-2}$ \\
		$10.0$ & $31.7371$ & $3.57283$ & $0.531668$ & $1.06143\times10^{-2}$ \\
		$50.0$ & $158.373$ & $17.2830$ & $0.530132$ & $4.27905\times10^{-4}$ \\
		$100.$ & $316.727$ & $34.5295$ & $0.530084$ & $1.07002\times10^{-4}$ \\[2mm]
		\Xhline{2\arrayrulewidth}
	\end{tabular}
	\caption{Numerical computations for the parameters of curves~$v$ with different heights~$M$.}
	\label{tab1}
\end{table}

\section*{Acknowledgments}

The authors would like to express deep gratitude to Professors V.V.~Palin, A.S.~Kochurov, and G.~Wachsmuth.

\printbibliography

\end{document}